\newcommand{\splitTX}{\ensuremath{\mathcal{O}_{\mathbb{P}^1}(2) \oplus \mathcal{O}_{\mathbb{P}^1}(1)^{\oplus d} \oplus \mathcal{O}_{\mathbb{P}^1}^{\oplus (n - d - 1)}}}
\newtheorem{conjecture}{Conjecture}[section]
\newtheorem{lemma}[conjecture]{Lemma}
\newtheorem{theorem}[conjecture]{Theorem}
\newtheorem{claim}{Claim}[conjecture]
\newtheorem{corollary}[conjecture]{Corollary}
\newtheorem{proposition}[conjecture]{Proposition}
\theoremstyle{definition}
\newtheorem{remark}[conjecture]{Remark}
\newtheorem{example}[conjecture]{Example}
\numberwithin{equation}{conjecture}
\newenvironment{enumerate-p}{
  \begin{enumerate}}
  {\global\setcounter{equation}{\value{enumi}}\end{enumerate}}
\newenvironment{enumerate-cont}{
  \begin{enumerate}
    {\setcounter{enumi}{\value{equation}}}}
  {\setcounter{equation}{\value{enumi}}
  \end{enumerate}}
\DeclareMathOperator{\degree}{deg}
\DeclareMathOperator{\RatCurves}{RatCurves}
\begin{document}

\title[Characterizations of Projective Spaces and Hyperquadrics]{Characterizations of Projective Spaces and Hyperquadrics via Positivity Properties of the Tangent Bundle} 
\author{Kiana Ross}
\address{Department of Mathematics, University of Washington, Seattle, WA USA}
\date{\today}
\maketitle

\begin{abstract}
Let $X$ be a smooth complex projective variety. A recent conjecture of S. Kov\'acs states that if the $p^{\text{th}}$-exterior power of the tangent bundle $T_X$ contains the $p^{\text{th}}$-exterior power of an ample vector bundle, then $X$ is either a projective space or a smooth quadric hypersurface. This conjecture is appealing since it is a common generalization of Mori's, Wahl's, Andreatta-W\'isniewski's, Kobayashi-Ochiai's and Araujo-Druel-Kov\'acs's characterizations of these spaces. In this paper I give a proof affirming this conjecture for varieties with Picard number 1. 
\end{abstract}

\section{Introduction}\label{S:intro}

Let $X$ be a smooth complex projective variety of dimension $n$. In a seminal paper \cite{MR554387}, S. Mori proved that the only such varieties having ample tangent bundle $T_X$ are projective spaces. This result finally settled Hartshorne's conjecture \cite{MR0282977}, the algebraic analog of Frankel's conjecture \cite{MR0123272} in complex differential geometry. (Another proof of Frankel's conjecture was given around the same time by Y. Siu and S. Yau in \cite{MR577360} using harmonic maps.) Since then, the ideas of \cite{MR554387} have been expanded significantly, and there are many results in the literature using positivity properties of $T_X$ to characterize projective spaces and quadric hypersurfaces. In this paper I will prove another characterization in this direction:

\begin{theorem}\label{T:main}
Let $X$ be a smooth complex projective variety of dimension $n$ with Picard number $1$. Assume that there exists an ample vector bundle $\mathcal{E}$ of rank $r$ on $X$ and a positive integer $p \leq r$ such that $\wedge^p \mathcal{E} \subseteq \wedge^p T_X$. Then either $X \simeq \mathbb{P}^n$, or $p = n$ and $X \simeq Q_p \subset \mathbb{P}^{p+1}$, where $Q_p$ denotes a smooth quadric hypersurface in $\mathbb{P}^{p+1}$.
\end{theorem}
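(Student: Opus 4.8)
The plan is to follow the strategy going back to Mori: produce a covering family of rational curves, analyze the tangent bundle along a minimal member, and invoke a characterization of $\mathbb{P}^n$ and $Q^n$, all the while exploiting the extra positivity coming from the exterior powers. First I would show that $X$ is Fano. The sheaf $\wedge^p \mathcal{E}$ is a quotient of $\mathcal{E}^{\otimes p}$, hence an ample vector bundle of rank $\binom{r}{p}$, so the hypothesis exhibits an ample subsheaf of $\wedge^p T_X$. If $K_X$ were pseudo-effective, then by Miyaoka's generic semipositivity theorem the restriction of $\Omega^1_X$, hence of $\Omega^p_X = (\wedge^p T_X)^\vee$, to a general member $C$ of a covering family of curves would be nef; but the saturation $\mathcal{S} \subseteq \wedge^p T_X|_C$ of the ample subsheaf $\wedge^p\mathcal{E}|_C$ is an ample subbundle, and dualizing $\mathcal{S} \hookrightarrow \wedge^p T_X|_C$ would make $\mathcal{S}^\vee$ a quotient of the nef bundle $\Omega^p_X|_C$, so $\deg \mathcal{S} = -\deg \mathcal{S}^\vee \le 0$, contradicting ampleness. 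Therefore $K_X$ is not pseudo-effective, so $X$ is uniruled (Boucksom--Demailly--P\u{a}un--Peternell); since $\rho(X)=1$, the class of a free rational curve generates $\overline{NE}(X)$ and meets $-K_X$ positively, so $-K_X$ is ample.

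Next I would fix a minimal rational curve $\ell \cong \mathbb{P}^1$ through a general point; it is standard, so $T_X|_\ell \cong \splitTX$ with $0 \le d \le n-1$ and $-K_X \cdot \ell = d+2$. By the characterizations of projective spaces and quadrics of Cho--Miyaoka--Shepherd-Barron and Kebekus, $d = n-1$ forces $X \cong \mathbb{P}^n$ and $d = n-2$ forces $X \cong \mathbb{P}^n$ or $X \cong Q^n$, so it suffices to prove two statements: if $p = n$ then $X \cong \mathbb{P}^n$ or $X \cong Q^n$; and if $p < n$ then $X \cong \mathbb{P}^n$. The case $p = n$ is short: the rank count $\binom{r}{n} \le \binom{n}{n} = 1$ forces $r = n$ and $\wedge^n \mathcal{E} = \det \mathcal{E}$, so the hypothesis merely says that $-K_X - c_1(\mathcal{E})$ is effective; restricting to $\ell$ and using that $\mathcal{E}|_\ell$ is an ample bundle of rank $n$ on $\mathbb{P}^1$, so $c_1(\mathcal{E}) \cdot \ell \ge n$, we get $-K_X \cdot \ell \ge n$, hence $d \ge n-2$, and then $X \cong \mathbb{P}^n$ or $X \cong Q^n$, both of which genuinely occur.

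The heart of the matter is the case $p < n$. Restricting $\wedge^p \mathcal{E} \hookrightarrow \wedge^p T_X$ to a general $\ell$ and writing $\mathcal{E}|_\ell \cong \bigoplus_{i=1}^r \mathcal{O}_{\mathbb{P}^1}(a_i)$ with all $a_i \ge 1$, the sheaf $\wedge^p \mathcal{E}|_\ell$ is a sum of $\binom{r}{p}$ line bundles of degree $\ge p$, whereas $\wedge^p(\splitTX)$ has exactly $\binom{n-d-1}{p}$ trivial summands and largest summand $\mathcal{O}_{\mathbb{P}^1}(2 + \min(d,p-1))$; the elementary fact that for a subsheaf $\bigoplus \mathcal{O}(b_i) \hookrightarrow \bigoplus \mathcal{O}(c_j)$ on $\mathbb{P}^1$ one has $\#\{b_i \ge t\} \le \#\{c_j \ge t\}$ for every $t$ then yields $\binom{r}{p} \le \binom{n}{p} - \binom{n-d-1}{p}$ together with further inequalities on the $a_i$. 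These bound $d$ from below but, just as in the known case $p=1$, do not by themselves force $d = n-1$ when $r$ is small, so the decisive input must be global: I would argue, in the spirit of Araujo--Druel--Kov\'acs, that the ample subsheaf $\wedge^p \mathcal{E} \subseteq \wedge^p T_X$ --- via the positivity of its saturation and the algebraicity and uniruledness of leaves of positive subsheaves of $T_X$ in the presence of many rational curves (Bost; Kebekus--Sol\'a Conde--Toma; Campana--P\u{a}un) --- makes the variety of minimal rational tangents at a general point so nondegenerate that $X \cong \mathbb{P}^n$, and that the quadric is excluded for $p < n$ because on $Q^n$ the line estimates above together with a Bott-type vanishing for twisted forms (Snow), $H^0(Q^n, \Omega^q_{Q^n}(j)) = 0$ for $1 \le q \le n-1$ and $j \le q$, preclude any such inclusion. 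I expect this last step to be the main obstacle: converting a single inclusion of coherent sheaves into enough geometric positivity to control minimal rational curves when $r$ is small, which curve-by-curve bookkeeping cannot do, so that one needs a genuine foliation / rational-connectedness argument as in the $p=1$ case, supplemented by the cohomological exclusion of the quadric when $p < n$.
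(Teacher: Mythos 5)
Your setup is sound and matches the paper's: Miyaoka's generic semipositivity gives uniruledness, $\rho(X)=1$ gives Fano, and restricting $\wedge^p\mathcal{E}\hookrightarrow\wedge^p T_X$ to a standard minimal rational curve $\ell$ yields exactly the kind of degree bookkeeping the paper performs. Your treatment of $p=n$ is also a reasonable (different) shortcut: $\binom{r}{n}\le 1$ forces $r=n$, so $\det\mathcal{E}\subseteq -K_X$ and $-K_X\cdot C\ge n$ for every rational curve, after which you appeal to the length-$n$ characterization of $\mathbb{P}^n$ and $Q^n$; be aware that the $\ge n$ case of that characterization is much deeper than Cho--Miyaoka--Shepherd-Barron (it is essentially Dedieu--H\"oring, not Kebekus), whereas the paper instead reaches $Q_p$ through Araujo--Druel--Kov\'acs. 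But the heart of the theorem --- the case $p<n$ --- is left as an acknowledged obstacle in your proposal, and the vague plan you sketch there (a foliation/rational-connectedness argument plus Bott vanishing on $Q^n$) is not the missing idea and would not obviously work: the hypothesis is a single inclusion of a possibly non-split bundle, so it does not translate into a statement about $H^0(Q^n,\Omega^q(j))$, and no integrability is in sight.

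The idea you are missing is that the curve-by-curve bookkeeping does more than give inequalities on $d$: since every summand of $\wedge^p\mathcal{E}|_\ell$ has degree between $p$ and $p+1$ (the top degree occurring in $\wedge^p T_X|_\ell$), the splitting type of $\mathcal{E}|_\ell$ is pinned down to exactly two possibilities, $\mathcal{O}_{\mathbb{P}^1}(1)^{\oplus r}$ or $\mathcal{O}_{\mathbb{P}^1}(2)\oplus\mathcal{O}_{\mathbb{P}^1}(1)^{\oplus r-1}$, uniformly over the (unsplit) family $H$, and each branch gets its own global argument. In the balanced case one runs an Andreatta--Wi\'sniewski-type descent on $\mathbb{P}(\mathcal{E})$ to conclude $\mathcal{E}\simeq\mathcal{L}^{\oplus r}$ for an ample line bundle $\mathcal{L}$, whence $\mathcal{L}^{\otimes p}\subseteq\wedge^p T_X$ and Theorem~\ref{T:ADK} finishes. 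In the unbalanced case, comparing the number of degree-$(p+1)$ summands on both sides gives $r-1\le d$, hence a vector bundle injection $f^\ast\mathcal{E}\hookrightarrow (f^\ast T_X)^+$; by Hwang's lemma the fixed subspace $\mathbb{P}(\mathcal{E}_x)$ then lies in every projectivized tangent space of the variety of minimal rational tangents $\mathcal{C}_x$, so $\mathcal{C}_x$ is a cone with smooth normalization, hence a linear subspace, and Araujo's theorem together with $\rho(X)=1$ yields $X\simeq\mathbb{P}^n$. Without this dichotomy and the two mechanisms attached to it, your proposal does not close the case $p<n$, so as it stands it is not a proof.
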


Theorem~\ref{T:main} gives an affirmative answer for varieties with Picard number 1 of the following more general conjecture of S. Kov\'acs:

\begin{conjecture}[Kov\'acs]\label{Q:main}
Let $X$ be a smooth complex projective variety of dimension $n$. If there exists an ample vector bundle $\mathcal{E}$ of rank $r$ on $X$ and a positive integer $p \leq r$ such that $\wedge^p \mathcal{E} \subseteq \wedge^p T_X$, then either $X \simeq \mathbb{P}^n$, $p = n$ and $X \simeq Q_p \subset \mathbb{P}^{p+1}$, where $Q_p$ denotes a smooth quadric hypersurface in $\mathbb{P}^{p+1}$.
\end{conjecture}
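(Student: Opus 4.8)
The plan is to run the standard Mori-theoretic strategy and reduce to the known characterizations of $\mathbb{P}^n$ and of hyperquadrics (Mori; Kobayashi--Ochiai; Cho--Miyaoka--Shepherd-Barron; the case $p=1$ of Araujo--Druel--Kov\'acs): use the inclusion $\wedge^p\mathcal{E}\subseteq\wedge^pT_X$ to force the rational curves of minimal degree on $X$ to have anticanonical degree at least $n$, and then read off $\mathbb{P}^n$ or $Q_n$ from their splitting type. First, $X$ is Fano: since $\rho(X)=1$ it suffices to see $X$ is uniruled, and if it were not, Miyaoka's generic semipositivity theorem would make $\Omega_X|_C$ nef for a general curve $C$ cut out by hyperplane sections; then $\wedge^p\Omega_X|_C$ is nef, so every subsheaf of its dual $\wedge^pT_X|_C$ has non-positive degree, contradicting that the image of the ample bundle $\wedge^p\mathcal{E}|_C$ has saturation of degree $\binom{r-1}{p-1}\bigl(\det\mathcal{E}\cdot C\bigr)>0$. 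Hence a rational curve through a general point has positive anticanonical degree, so $-K_X$ is ample.

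Now fix a minimal dominating family of rational curves and let $\ell$ be a general member through a general point $x$; then $\ell$ is a standard rational curve, so $T_X|_\ell\cong\splitTX$, with $-K_X\cdot\ell=d+2$ and $q:=n-d-1$. Restricting the inclusion to $\ell$: every summand of the ample bundle $\wedge^p\mathcal{E}|_\ell$ has degree $\ge p$, hence $\wedge^p\mathcal{E}|_\ell$ factors through the canonical subbundle $\bigl(\wedge^pT_X|_\ell\bigr)_{\ge p}$ spanned by the summands of degree $\ge p$, which has rank $\binom{d+1}{p}+\binom{d}{p-2}q$. Passing to fibres at $x$, the fixed subspace $A:=(\wedge^p\mathcal{E})_x\subseteq\wedge^pT_xX$ lies in $F_\ell:=\bigl(\wedge^pT_X|_\ell\bigr)_{\ge p}\!\otimes\!k(x)$ for \emph{every} minimal curve $\ell$ through $x$, so $A\subseteq\bigcap_\ell F_\ell$. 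Writing $\mathcal{C}_x\subseteq\mathbb{P}(T_xX)$ for the ($d$-dimensional) variety of minimal rational tangents, and $V_{[v]}\subseteq T_xX$ (for general $[v]\in\mathcal{C}_x$) for the $(d+1)$-dimensional affine tangent space to the cone over $\mathcal{C}_x$ — equivalently, the positive part $\mathcal{O}_{\mathbb{P}^1}(2)\oplus\mathcal{O}_{\mathbb{P}^1}(1)^{\oplus d}$ of $T_X|_\ell$ evaluated at $x$ — one has $\wedge^pV_{[v]}\subseteq F_\ell$, while $F_\ell$ meets $\wedge^pW_\ell$ trivially, $W_\ell$ being the $q$-dimensional trivial part of $T_X|_\ell$ at $x$.

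The crux is to promote this to $d\ge n-2$, i.e.\ $-K_X\cdot\ell\ge n$, and I expect this to be the main obstacle. A single curve gives only $\binom rp\le\binom{d+1}p+\binom d{p-2}q$, which is far too weak (it permits $d=0$ when $r=p$), so the entire family of minimal tangent directions has to be used. The plan is a dichotomy on $\mathcal{C}_x$. If $\mathcal{C}_x$ is linearly non-degenerate, then as $[v]$ runs over $\mathcal{C}_x$ the subspaces $V_{[v]}$ spread out in $T_xX$, and one argues that the only way the fixed nonzero subspace $A$ can lie in $F_\ell$ for every $\ell$ is to have $\dim V_{[v]}\ge n-1$, i.e.\ $d\ge n-2$. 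If $\mathcal{C}_x$ is degenerate, its linear span defines a proper distribution $\mathcal{D}\subsetneq T_X$, which by the geometry of minimal rational curves is a foliation whose quotient $T_X/\mathcal{D}$ is ``anti-positive'' along minimal curves; this should be incompatible with the ample subsheaf $\wedge^p\mathcal{E}\subseteq\wedge^pT_X$. Hence $q=n-d-1\in\{0,1\}$ and $-K_X\cdot\ell\ge n$.

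Finally: if $q=0$, then $T_X|_\ell$ is ample and $-K_X\cdot\ell=n+1$ for the general minimal curve through a general point, so $X\cong\mathbb{P}^n$ by Cho--Miyaoka--Shepherd-Barron. If $q=1$, then $T_X|_\ell\cong\mathcal{O}_{\mathbb{P}^1}(2)\oplus\mathcal{O}_{\mathbb{P}^1}(1)^{\oplus(n-2)}\oplus\mathcal{O}_{\mathbb{P}^1}$ and $-K_X\cdot\ell=n$, so $X\cong Q_n$ by the characterization of smooth quadrics via the splitting type of $T_X$ along a minimal rational curve. It remains to check that the quadric case forces $p=n$: an explicit Bott-vanishing computation on $Q_n$ shows that $\wedge^pT_{Q_n}$ admits no subsheaf of the form $\wedge^p\mathcal{E}$ with $\mathcal{E}$ ample when $1\le p\le n-1$ (already $H^0\bigl(Q_n,\wedge^pT_{Q_n}\otimes A^{-1}\bigr)=0$ for every ample line bundle $A$ in that range rules out the line-bundle pieces), so $p=n$; and then $r=n$ as well, since $\wedge^n\mathcal{E}$ has rank $\binom rn$ and must embed into the line bundle $\wedge^nT_{Q_n}=\mathcal{O}(-K_{Q_n})$.
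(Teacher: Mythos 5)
Your proposal stalls exactly where you say it does: the inequality $d\ge n-2$ (equivalently $-K_X\cdot\ell\ge n$) is the load-bearing step, and nothing in the write-up proves it. The first part is sound and parallels the paper --- uniruledness via Miyaoka is Lemma~\ref{L:Xuniruled}, and the observation that every summand of $\wedge^p\mathcal{E}|_\ell$ has degree $\ge p$, hence lands in the degree-$\ge p$ part of $\wedge^pT_X|_\ell$, is the engine of Lemma~\ref{L:splitTX}. But both branches of your dichotomy on $\mathcal{C}_x$ are asserted rather than argued. In the non-degenerate branch, the condition ``$A\subseteq F_\ell$ for every $\ell$ through $x$'' is precisely the kind of hypothesis that Araujo's Lemma~\ref{L:Ara4.2} converts into a statement that $\mathcal{C}_x$ is a \emph{cone} (and then, via Lemma~\ref{L:Ara4.3} and Theorem~\ref{T:CxNormal}, a linear subspace); it does not yield a lower bound on $d$ --- note that $F_\ell$ also contains the summands of type $\mathcal{O}(2)\wedge\mathcal{O}(1)^{\wedge(p-2)}\wedge\mathcal{O}$, so membership of $A$ in every $F_\ell$ imposes no control on $q=n-d-1$ by dimension-counting alone. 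In the degenerate branch, integrability of the distribution spanned by the VMRT and the asserted negativity of its quotient are serious open-ended claims, not off-the-shelf facts. The closing Bott-vanishing computation ruling out $1\le p\le n-1$ on $Q_n$ is likewise only announced. So as written this is a strategy outline with its hardest step missing, not a proof.

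For contrast, the paper never establishes $d\ge n-2$ and does not invoke Cho--Miyaoka--Shepherd-Barron or a degree-$n$ characterization of quadrics. It first pins down the splitting of $\mathcal{E}$ itself on minimal curves (Lemma~\ref{L:splitTX}: either $\mathcal{O}_{\mathbb{P}^1}(1)^{\oplus r}$ or $\mathcal{O}_{\mathbb{P}^1}(2)\oplus\mathcal{O}_{\mathbb{P}^1}(1)^{\oplus(r-1)}$, because each summand of $f^*\wedge^p\mathcal{E}$ must have degree $p$ or $p+1$). In the first case it splits $\mathcal{E}\cong\mathcal{L}^{\oplus r}$ globally (Theorem~\ref{T:sum}) and feeds $\mathcal{L}^{\otimes p}\subseteq\wedge^pT_X$ into Araujo--Druel--Kov\'acs, which is where the quadric and the conclusion $p=n$ come from with no vanishing computation. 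In the second case it shows $\mathcal{E}|_\ell\hookrightarrow T_X^+|_\ell$ by comparing the numbers $\binom{r-1}{p-1}$ and $\binom{d}{p-1}$ of degree-$(p+1)$ summands, deduces from Lemmas~\ref{L:Ara4.2}, \ref{L:Ara4.3} and Proposition~\ref{P:Hwa2.2} that $\mathcal{C}_x$ is a single linear subspace, and concludes $X\simeq\mathbb{P}^n$ from Theorem~\ref{T:AraMain} together with $\rho(X)=1$. Note finally that the displayed statement is the full conjecture with no Picard-number hypothesis; like the paper, you tacitly assume $\rho(X)=1$ from the first paragraph on, and that is the only case in which a proof is actually available.
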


Motivation for this conjecture comes from the desire to unify existing characterization results of this type into a single statement. Mori's proof of the Hartshorne conjecture in 1979 was the first major result, and its method of studying rational curves of minimal degree has been a catalyst for much that has followed.

\begin{theorem}\label{T:Mori} \cite{MR554387}
Let $X$ be a smooth complex projective variety of dimension $n$, and assume that the tangent sheaf $T_X$ is ample. Then $X \simeq \mathbb{P}^n$.
\end{theorem}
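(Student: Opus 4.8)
The plan is to reconstruct Mori's original argument, whose three movements are: (i) manufacture rational curves of small anticanonical degree through a general point by reduction to positive characteristic; (ii) use the ampleness of $T_X$ to force these curves to be minimal ``standard lines''; and (iii) show that the family of lines through a fixed point identifies $X$ with $\mathbb{P}^n$. (An entirely different route, through harmonic maps and Bochner-type vanishing, was found independently by Siu and Yau, but I would pursue the algebraic one.)

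\textbf{Step (i): producing rational curves.} Ampleness of $T_X$ makes $-K_X = \det T_X$ ample, so $X$ is Fano. The essential point---and the step I expect to be the main obstacle---is a bend-and-break argument run in characteristic $p>0$: spread $X$, a general point $x$, and some fixed curve through $x$ out over a finitely generated $\mathbb{Z}$-subalgebra of $\mathbb{C}$, reduce modulo a large prime, pre-compose the curve with iterates of Frobenius to inflate its degree while fixing $x$, and observe that once the degree is large relative to the dimension of the space of maps sending a point to $x$, the curve cannot be rigid, so it degenerates and splits off a rational component through $x$ of smaller degree; iterating while tracking anticanonical degrees yields a rational curve $C$ through $x$ with $0 < -K_X\cdot C \le n+1$, and a Hilbert-scheme semicontinuity argument lifts such a curve back to characteristic zero.

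\textbf{Step (ii): rigidifying via positivity.} Let $f\colon \mathbb{P}^1 \to X$ be the normalization of such a minimal $C$ (which we may take birational onto its image). Since $f$ is finite and $T_X$ is ample, $f^*T_X$ is an ample bundle on $\mathbb{P}^1$, hence $f^*T_X \cong \bigoplus_{i=1}^n \mathcal{O}_{\mathbb{P}^1}(a_i)$ with every $a_i \ge 1$; the nonzero differential $T_{\mathbb{P}^1}=\mathcal{O}_{\mathbb{P}^1}(2) \to f^*T_X$ forces some $a_i \ge 2$. Therefore
\[ -K_X\cdot C \;=\; \deg f^*T_X \;=\; \sum_i a_i \;\ge\; (n-1)+2 \;=\; n+1, \]
and comparison with Step (i) gives equality throughout: $f$ is an immersion, $C$ is a free rational curve of anticanonical degree $n+1$, and $f^*T_X \cong \mathcal{O}_{\mathbb{P}^1}(2)\oplus \mathcal{O}_{\mathbb{P}^1}(1)^{\oplus(n-1)}$. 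Because $x$ was general, these minimal lines form a covering family.

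\textbf{Step (iii): from standard lines to $\mathbb{P}^n$.} Fix a general $x \in X$. The splitting above makes the deformation space $\mathcal{H}_x$ of lines through $x$ smooth of dimension $n-1$, exactly as for lines through a point of $\mathbb{P}^n$, and the tangent direction at $x$ defines a morphism $\tau_x\colon \mathcal{H}_x \to \mathbb{P}(T_xX)\cong \mathbb{P}^{n-1}$. Minimality of the degree, via a further bend-and-break, shows that every such line is smooth at $x$ and that $\tau_x$ is finite, hence surjective; showing that two general points of $X$ lie on a unique line (again minimality) gives $\deg \tau_x = 1$, so $\tau_x$ is an isomorphism. The universal family over $\mathcal{H}_x\cong\mathbb{P}^{n-1}$ is then a $\mathbb{P}^1$-bundle whose evaluation map to $X$ is birational, producing a birational map $\mathbb{P}^n \dashrightarrow X$; the remaining subtlety is to rule out degenerations of lines---broken lines through $x$---which one does using minimality of the degree, deducing $b_2(X)=1$ along the way, after which a standard endgame (in the spirit of the later Cho--Miyaoka--Shepherd-Barron argument) upgrades the birational map to the desired isomorphism $X \simeq \mathbb{P}^n$.
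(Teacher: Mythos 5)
The paper does not prove this statement: Theorem~\ref{T:Mori} is quoted as a classical background result with a citation to Mori's 1979 paper, so there is no in-paper argument to compare yours against. Your outline is a faithful reconstruction of Mori's published proof, and the parts you spell out are correct: the characteristic-$p$ bend-and-break producing a rational curve with $0 < -K_X\cdot C \le n+1$ through a general point; the splitting $f^*T_X\cong\bigoplus\mathcal{O}_{\mathbb{P}^1}(a_i)$ with all $a_i\ge 1$ by ampleness and some $a_i\ge 2$ from the differential, forcing equality $-K_X\cdot C=n+1$ and the standard splitting $\mathcal{O}_{\mathbb{P}^1}(2)\oplus\mathcal{O}_{\mathbb{P}^1}(1)^{\oplus(n-1)}$; and the fact that minimality of the degree makes the family through $x$ unsplit (any broken curve would have anticanonical degree at least $2(n+1)$). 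Two remarks on Step (iii), which is where your sketch is thinnest. First, the assertion that ``two general points of $X$ lie on a unique line'' is not quite the statement one should aim for (a priori two general points need not be joined by any single line); what Mori actually establishes is that a minimal curve through $x$ is determined by its tangent direction at $x$, i.e.\ injectivity of $\tau_x$, via a further bend-and-break on curves with the same tangent. Second, Mori's endgame is more concrete than ``upgrade a birational map'': he shows the evaluation map from the universal family $U_x\to\mathcal{H}_x\cong\mathbb{P}^{n-1}$ is the blow-up of $X$ at $x$, identifies $U_x$ with $\mathbb{P}(\mathcal{O}_{\mathbb{P}^{n-1}}\oplus\mathcal{O}_{\mathbb{P}^{n-1}}(1))$, and recovers $X\simeq\mathbb{P}^n$ by contracting the distinguished section; invoking the later Cho--Miyaoka--Shepherd-Barron machinery is anachronistic but would also work. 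As a proof plan for a cited classical theorem, this is sound; the deferred steps are exactly the technical content of Mori's paper rather than gaps in the logic.
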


In 1983, J. Wahl proved a related statement using algebraic methods: 

\begin{theorem}\label{T:Wahl} \cite{MR700774}
Let $X$ be a smooth complex projective variety of dimension $n$, and assume that the tangent sheaf $T_X$ contains an ample line bundle $\mathcal{L}$. Then either $(X, \mathcal{L}) \simeq (\mathbb{P}^n, \mathcal{O}_{\mathbb{P}^n}(1))$ or $(X, \mathcal{L}) \simeq (\mathbb{P}^1, \mathcal{O}_{\mathbb{P}^1}(2))$.
\end{theorem}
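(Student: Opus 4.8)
The plan is to reformulate the hypothesis as a positivity statement about a rank-one foliation and then run Mori theory. The injection $\mathcal{L}\hookrightarrow T_X$ exhibits $\mathcal{L}$ (or, if one prefers, its saturation $\mathcal{F}$, still a line bundle) as the tangent sheaf of a rank-one foliation $\mathcal{F}$ on $X$ with $-K_{\mathcal{F}}$ containing the ample line bundle $\mathcal{L}$; in the model $X=\mathbb{P}^n$, $n\ge 2$, this is the ``radial'' foliation by lines through a point, with $T_{\mathcal{F}}=\mathcal{O}_{\mathbb{P}^n}(1)$, while for $n=1$ it is just $T_{\mathbb{P}^1}=\mathcal{O}_{\mathbb{P}^1}(2)$. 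So I would aim to show: (i) $X$ is uniruled; (ii) $\mathcal{F}$ is algebraically integrable and the closure $\overline{\ell}$ of the leaf through a general point is a rational curve, these sweeping out $X$; (iii) $\overline{\ell}$ is a minimal rational curve with $-K_X\cdot\overline{\ell}=n+1$, so that $X\cong\mathbb{P}^n$ by the Cho--Miyaoka--Shepherd-Barron characterization; and (iv) pin down $\mathcal{L}$.

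For (i): if $X$ were not uniruled, Miyaoka's generic semipositivity theorem says $\Omega^1_X$ is nef on a general member $C$ of some covering family; but dualizing the saturated inclusion gives a surjection $\Omega^1_X\twoheadrightarrow\mathcal{L}^{-1}$, and $\mathcal{L}^{-1}\cdot C<0$ contradicts that a quotient of a nef bundle is nef. For (ii) I would invoke the theory of rational curves on foliated varieties (Bogomolov--McQuillan; Araujo--Druel): a rank-one foliation whose tangent sheaf is positive enough---ampleness of $\mathcal{L}$ certainly suffices---is algebraically integrable with rational leaves, which then form a covering family. (Alternatively, cut down to a general surface section $S$, on which $\mathcal{F}|_S$ is a foliation by curves with ample tangent bundle, and apply Miyaoka's bound on tangent curves to get rational leaves there.) For (iii), let $f\colon\mathbb{P}^1\to X$ be the normalization of a general $\overline{\ell}$. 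Since $\overline{\ell}$ is free, $f^*T_X$ is nef; using the exact sequence $0\to\mathcal{F}\to T_X\to\mathcal{Q}\to 0$ (the reason $f$ is tangent to $\mathcal{F}$) together with the fact that through a general point there is a unique leaf, one should be able to pin down the splitting type of $f^*T_X$ as $\splitTX$ with $d=n-1$, i.e.\ $-K_X\cdot\overline{\ell}=n+1$, at which point the Cho--Miyaoka--Shepherd-Barron theorem (or Kebekus's proof of it) gives $X\cong\mathbb{P}^n$. When $n=1$ everything collapses: $\mathcal{L}\hookrightarrow T_X$ ample forces $0<\deg\mathcal{L}\le\deg T_X=2-2g(X)$, so $g(X)=0$ and $(X,\mathcal{L})=(\mathbb{P}^1,\mathcal{O}_{\mathbb{P}^1}(2))$. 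Finally, for (iv), restricting $\mathcal{L}\hookrightarrow T_{\mathbb{P}^n}$ to a line shows $\mathcal{L}=\mathcal{O}_{\mathbb{P}^n}(a)$ with $a\le 2$, and $H^0(\mathbb{P}^n,T_{\mathbb{P}^n}(-2))=0$ for $n\ge 2$ rules out $a=2$, so $\mathcal{L}=\mathcal{O}_{\mathbb{P}^n}(1)$.

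The main obstacle is the passage (ii)--(iii): genuinely producing the covering family of rational curves tangent to $\mathcal{F}$ and, above all, showing that $-K_X\cdot\overline{\ell}$ is exactly $n+1$ (equivalently, that the minimal anticanonical degree of a rational curve through a general point is $\ge n+1$, which is essentially the content of the theorem). This is the only place where ampleness of $\mathcal{L}$---as opposed to a formal sub-line-bundle statement---must really be used, and it is delicate: as the radial foliation on $\mathbb{P}^n$ already shows, a general leaf passes through the singular locus $\mathrm{Sing}(\mathcal{F})$, so $f^*\mathcal{Q}$ acquires torsion and the naive comparison between $T_{\mathbb{P}^1}$, $f^*\mathcal{F}$ and $f^*T_X$ must be handled with care; one must also rule out that the leaves fiber $X$ over a positive-dimensional base, which is done by showing the relevant relative tangent bundle can never be ample. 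Here I would expect to lean on the structure theory of Fano foliations of small index (Araujo--Druel, Araujo--Druel--Kov\'acs). By contrast, step (i), the $n=1$ base case, and the identification (iv) of $\mathcal{L}$ are routine.
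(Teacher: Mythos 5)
The paper offers no proof of this statement: it is quoted from Wahl \cite{MR700774}, whose original argument is algebraic (deformation theory of the affine cone over $(X,\mathcal{L})$), and the paper points to Druel's geometric proof \cite{MR2092774}, which is essentially the route you sketch. Your step (i) is correct and is the same Miyaoka semipositivity argument as the paper's Lemma~\ref{L:Xuniruled}; the $n=1$ base case and the identification (iv) of $\mathcal{L}$ on $\mathbb{P}^n$ are also fine.

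The genuine gap is exactly where you flag it, and it is not a removable technicality: nothing in the proposal establishes that $-K_X\cdot\overline{\ell}\ge n+1$ for the closure $\overline{\ell}$ of a general leaf, and this cannot be extracted from the foliation structure alone. Uniqueness of the leaf through a general point gives no lower bound on $\dim H_x$ for the family of deformations of $\overline{\ell}$ --- the leaves themselves contribute $0$ to that dimension, and it is the non-leaf deformations that must account for $d=n-1$; a priori there might be none, in which case $f^\ast T_X\simeq\mathcal{O}_{\mathbb{P}^1}(2)\oplus\mathcal{O}_{\mathbb{P}^1}^{\oplus(n-1)}$ and CMSB says nothing. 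So step (iii) as written assumes what is essentially the entire content of the theorem. The standard repair (Druel's argument, and precisely the $r=p=1$ specialization of this paper's CASE~II in Section~\ref{S:meat}) is to abandon the leaf family and work with an arbitrary minimal dominating family $H$: for general $[f]\in H$ the composite $f^\ast\mathcal{L}=\mathcal{O}_{\mathbb{P}^1}(a)\to f^\ast T_X\to f^\ast T_X/(f^\ast T_X)^+\simeq\mathcal{O}_{\mathbb{P}^1}^{\oplus(n-1-d)}$ vanishes because $a\ge1$ (Lemma~\ref{L:vbInj} with $r=1$), so $\mathbb{P}(\mathcal{L}_x)$ lies in every embedded tangent space of $\mathcal{C}_x$; Lemmas~\ref{L:Ara4.2}--\ref{L:Ara4.3} and Proposition~\ref{P:Hwa2.2} then force $\mathcal{C}_x$ to be a single linear subspace, and Theorem~\ref{T:AraMain} produces a $\mathbb{P}^{d+1}$-bundle structure $X^\circ\to T^\circ$. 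You must then still show $T^\circ$ is a point --- this is where ampleness of $\mathcal{L}$ on all of $X$ (not merely along the fibers) enters, and your sketch only gestures at it. Finally, the appeal to Cho--Miyaoka--Shepherd-Barron is heavier machinery than the problem requires and, while not incorrect, sits oddly: the degree bound it needs as input is the very thing left unproved.
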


Note that S. Druel gave a geometric proof of this theorem in \cite{MR2092774}. In 1998, F. Campana and T. Peternell generalized Wahl's theorem to bundles of rank $r = n, n - 1$, and $n - 2$ \cite{MR1642626}. Finally, in 2001, M. Andreatta and J. Wi\'sniewski proved the analogous statement for vector bundles of arbitrary rank:

\begin{theorem}\label{T:AW01} \cite{MR1859022}
Let $X$ be a smooth complex projective variety of dimension $n$, and assume that the tangent sheaf $T_X$ contains an ample vector bundle $\mathcal{E}$ of rank $r$. Then either $(X, \mathcal{E}) \simeq (\mathbb{P}^n, \mathcal{O}_{\mathbb{P}^1}(1)^{\oplus r})$ or $r = n$ and $(X, \mathcal{E}) \simeq (\mathbb{P}^n, T_{\mathbb{P}^n})$.
\end{theorem}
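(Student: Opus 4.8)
The plan is to run the Mori program: show that $X$ is uniruled, pass to a minimal rational curve, use the inclusion $\mathcal{E}\subseteq T_X$ to pin down the splitting type of $T_X$ along it, and then identify $X$ via a characterization of $\mathbb{P}^n$; neither the Fano property nor $\rho(X)=1$ needs to be assumed, as both come out at the end. Dualizing $\mathcal{E}\hookrightarrow T_X$ gives a generically surjective morphism $\Omega^1_X\to\mathcal{E}^\vee$. If $X$ were not uniruled, then by Miyaoka's generic semipositivity $\Omega^1_X$ would be nef on a general complete intersection curve $C$; but the image of $\Omega^1_X|_C\to\mathcal{E}^\vee|_C$ is then at once a rank-$r$ subsheaf of the rank-$r$ bundle $\mathcal{E}^\vee|_C$, hence of degree at most $\degree(\mathcal{E}^\vee|_C)<0$ (as $\mathcal{E}|_C$ is ample), and a quotient of the nef bundle $\Omega^1_X|_C$, hence of non-negative degree --- a contradiction. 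So $X$ is uniruled; let $\ell$ be a general member of a minimal dominating family of rational curves, passing through a general point $x$, with normalization $f\colon\mathbb{P}^1\to X$. By the standard splitting of a minimal rational curve, $f^*T_X\cong\splitTX$ for some integer $0\le d\le n-1$, where $-K_X\cdot\ell=d+2$ and $d$ is the dimension of the subfamily of curves through $x$, equal to $\dim\mathcal{C}_x$ for the variety $\mathcal{C}_x\subseteq\mathbb{P}(T_xX)$ of minimal rational tangents at $x$.

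Next, since $f$ is finite onto $\ell$ and $\mathcal{E}$ is ample, $f^*\mathcal{E}$ is an ample bundle on $\mathbb{P}^1$, so $f^*\mathcal{E}\cong\bigoplus_{i=1}^{r}\mathcal{O}_{\mathbb{P}^1}(a_i)$ with every $a_i\ge1$. As $\mathrm{Hom}(\mathcal{O}_{\mathbb{P}^1}(a_i),\mathcal{O}_{\mathbb{P}^1})=0$, the composite of $f^*\mathcal{E}\hookrightarrow f^*T_X$ with the projection onto the trivial summand $\mathcal{O}_{\mathbb{P}^1}^{\oplus(n-d-1)}$ is zero, so $f^*\mathcal{E}$ embeds into $\mathcal{O}_{\mathbb{P}^1}(2)\oplus\mathcal{O}_{\mathbb{P}^1}(1)^{\oplus d}$; comparing ranks, $r\le d+1$. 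Moreover, using that an injection $\bigoplus_k\mathcal{O}_{\mathbb{P}^1}(c_k)\hookrightarrow\bigoplus_j\mathcal{O}_{\mathbb{P}^1}(b_j)$ forces $\#\{k:c_k\ge c\}\le\#\{j:b_j\ge c\}$ for every $c$ (take $c=2$ and $c=3$), at most one $a_i$ equals $2$ and none exceeds $2$; hence $f^*\mathcal{E}$ is $\mathcal{O}_{\mathbb{P}^1}(1)^{\oplus r}$ or $\mathcal{O}_{\mathbb{P}^1}(2)\oplus\mathcal{O}_{\mathbb{P}^1}(1)^{\oplus(r-1)}$ --- a fact I reuse below.

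The heart of the proof is to upgrade the bound $r\le d+1$ drawn from a single curve to the equality $d=n-1$, i.e.\ to the ampleness of $f^*T_X$ itself, using that $\mathcal{E}$ is ample over all of $X$. I would work over the space $\RatCurves(X)$ of minimal curves, with universal family $\mathcal{U}$ and evaluation $e\colon\mathcal{U}\to X$: as in the preceding paragraph applied fibrewise, $e^*\mathcal{E}$ factors through the relatively ample summand of rank $d+1$ of $e^*T_X$ (the piece of the relative Harder--Narasimhan filtration of slopes $\geq 1$), these patch to a rank-$(d+1)$ subsheaf through which $e^*\mathcal{E}$ factors, and the positivity of $e^*\mathcal{E}$ --- extracted via a Chern-class and pushforward computation over $\RatCurves(X)$ --- should be incompatible with $d<n-1$. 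The subtlety is that one genuinely needs the full equality $d=n-1$, not merely that $\mathcal{C}_x$ spans $\mathbb{P}(T_xX)$: for hyperquadrics $d=n-2$ while $\mathcal{C}_x$ is still a nondegenerate subvariety, so the ampleness of $\mathcal{E}$ must be exploited beyond the linear span of the variety of minimal rational tangents. Once $d=n-1$, one has $-K_X\cdot\ell=n+1$ for a covering family of rational curves, so $X\cong\mathbb{P}^n$ by the Cho--Miyaoka--Shepherd-Barron characterization of projective space, and in particular $\rho(X)=1$.

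Finally, on $X\cong\mathbb{P}^n$ the computation of the second paragraph shows $f^*\mathcal{E}$ has the constant splitting type $(1^{r})$ or $(2,1^{r-1})$ over every line, so $\mathcal{E}$ is a uniform vector bundle of rank $r\le n$; by the classification of uniform bundles of rank at most $n$ on $\mathbb{P}^n$ (Van de Ven, Sato, Elencwajg), and since $\mathrm{Hom}(\mathcal{O}_{\mathbb{P}^n}(2),T_{\mathbb{P}^n})=0$ for $n\ge2$ (so no subsheaf of $T_{\mathbb{P}^n}$ can contain an $\mathcal{O}(2)$), the only possibilities are $\mathcal{E}\cong\mathcal{O}_{\mathbb{P}^n}(1)^{\oplus r}$, or $r=n$ and $\mathcal{E}\cong T_{\mathbb{P}^n}$, which is the claim. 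I expect the third step --- forcing $d=n-1$ --- to be the real obstacle: bridging ``$r\le d+1$'' and ``$d=n-1$'', i.e.\ excluding the intermediate cases $0\le d\le n-2$ by positivity of $\mathcal{E}\subseteq T_X$ alone, is the genuine geometric content of Andreatta--Wi\'sniewski's theorem, and making the bookkeeping over $\RatCurves(X)$ actually force $\mathcal{C}_x=\mathbb{P}(T_xX)$ rather than merely ``$\mathcal{C}_x$ large'' is where most of the work lies; should a uniform argument prove elusive, I would stratify by the corank $n-r$, settling $r=n,n-1,n-2$ first as in Campana--Peternell \cite{MR1642626} and the remaining cases by descending induction.
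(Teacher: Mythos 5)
Your first two steps are sound and match the machinery this paper actually develops for its main theorem: uniruledness via Miyaoka's generic semipositivity is Lemma~\ref{L:Xuniruled}, and the determination that $f^\ast\mathcal{E}$ is $\mathcal{O}_{\mathbb{P}^1}(1)^{\oplus r}$ or $\mathcal{O}_{\mathbb{P}^1}(2)\oplus\mathcal{O}_{\mathbb{P}^1}(1)^{\oplus(r-1)}$ with $r\le d+1$ is Lemma~\ref{L:splitTX} together with the counting in Lemma~\ref{L:vbInj} (specialized to $p=1$). The final step (uniform bundles on $\mathbb{P}^n$) is also fine once you know $X\simeq\mathbb{P}^n$. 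But the third step, which you yourself flag as the heart of the argument, is a genuine gap, not a deferrable technicality. The only numerical information your setup extracts from the ampleness of $\mathcal{E}$ is $\degree(f^\ast\mathcal{E})\in\{r,r+1\}$ and the inequality $r\le d+1$, and this is simply compatible with $d<n-1$: on a quadric $Q_n$ one has $d=n-2$, so your bookkeeping permits an ample subsheaf of rank up to $n-1$ there, and the theorem's burden is precisely to exclude such configurations. Worse, for $r=1$ the bound $r\le d+1$ is vacuous, so your scheme would have to reprove Wahl's theorem (Theorem~\ref{T:Wahl}) by forcing $d=n-1$ from an ample line bundle in $T_X$ --- something no known proof does, and which no Chern-class or pushforward identity over $\RatCurves(X)$ will deliver, since the relevant degrees are already fully accounted for by the splitting type. ``Force $-K_X\cdot\ell=n+1$ and invoke Cho--Miyaoka--Shepherd-Barron'' is therefore not a viable skeleton for this theorem; the fallback of stratifying by corank as in Campana--Peternell is an acknowledgment that the proof is absent rather than a proof.

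For comparison: this statement is quoted from Andreatta--Wi\'sniewski and not reproved in the paper, but Section~\ref{S:meat} contains the two routes that actually work, and neither passes through $d=n-1$. In Case~I ($f^\ast\mathcal{E}\simeq\mathcal{O}_{\mathbb{P}^1}(1)^{\oplus r}$) one lifts the minimal curves to $\mathbb{P}(\mathcal{E})$ to trivialize $\mathcal{E}$ as $\mathcal{L}^{\oplus r}$ (Theorems~\ref{T:AWfix} and~\ref{T:sum}) and then reduces to Wahl's theorem, which must be taken as an input, not a byproduct. In Case~II ($f^\ast\mathcal{E}\simeq\mathcal{O}_{\mathbb{P}^1}(2)\oplus\mathcal{O}_{\mathbb{P}^1}(1)^{\oplus(r-1)}$) one uses the injection $f^\ast\mathcal{E}\hookrightarrow (f^\ast T_X)^+$ and Lemma~\ref{L:Hwa} to place $\mathbb{P}(\mathcal{E}_x)$ in every tangent space of $\mathcal{C}_x$, concludes via Lemmas~\ref{L:Ara4.2} and~\ref{L:Ara4.3} that $\mathcal{C}_x$ is a \emph{linear subspace of dimension $d$} (not all of $\mathbb{P}(T_xX)$), and then applies Araujo's Theorem~\ref{T:AraMain} to obtain a $\mathbb{P}^{d+1}$-bundle structure, which collapses to $X\simeq\mathbb{P}^n$. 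Your instinct that the hyperquadric is the obstruction to a purely numerical argument is correct; the cure is the VMRT linearity argument, not a refinement of the degree count.
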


It is worth noting that in 2006 C. Araujo developed a different approach to Theorem \ref{T:AW01} using the variety of minimal rational tangents \cite{MR2232023}. In 1973, S. Kobayashi and T. Ochiai proved the following theorem characterizing both projective spaces and quadric hypersurfaces: 

\begin{theorem}\label{T:KO}\cite{MR0316745}
Let $X$ be an $n$-dimensional compact complex manifold with ample line bundle $\mathcal{L}$. If $c_1(X) \geq (n+1)c_1(\mathcal{L})$ then $X \simeq \mathbb{P}^n$. If $c_1(X) = nc_1(\mathcal{L})$ then $X \simeq Q_n$, where $Q_n \subseteq \mathbb{P}^{n+1}$ is a hyperquadric.
\end{theorem}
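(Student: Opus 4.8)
This is a classical theorem, so the plan is to sketch the proof I find cleanest — one that runs through rational curves of low degree — and to indicate the alternative inductive argument in the spirit of Kobayashi and Ochiai's original proof.

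In both cases $-K_X$ is ample, so $X$ is Fano, and by Mori's bend-and-break (\cite{MR554387}) there is a rational curve through a general point of $X$ with $-K_X\cdot C\leq n+1$; I take $C$ general in a minimal dominating family. In case (a) the hypothesis gives $-K_X\cdot C\geq(n+1)(\mathcal{L}\cdot C)\geq n+1$, so $-K_X\cdot C=n+1$ and $\mathcal{L}\cdot C=1$. In case (b) it gives $-K_X\cdot C=n(\mathcal{L}\cdot C)$, a positive multiple of $n$ that is $\leq n+1$, hence $=n$ (the case $n=1$ being trivial), so $-K_X\cdot C=n$ and $\mathcal{L}\cdot C=1$. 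A general member of a minimal dominating family is free and standard, so under the normalization $f\colon\mathbb{P}^1\to X$ one has $f^*T_X\cong\splitTX$ with $d+2=-K_X\cdot C$; thus $d=n-1$ in case (a) and $d=n-2$ in case (b).

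Now I would invoke the Cho--Miyaoka--Shepherd-Barron characterizations. When $d=n-1$, the tangent map sends the variety of minimal rational tangents at a general point onto all of $\mathbb{P}(T_xX)$, and one gets $X\simeq\mathbb{P}^n$; then $\mathcal{L}\cdot(\text{line})=1$ forces $\mathcal{L}\simeq\mathcal{O}_{\mathbb{P}^n}(1)$, and the original inequality becomes an equality. When $d=n-2$ one first shows the variety of minimal rational tangents at a general point is a smooth $(n-2)$-dimensional quadric, and then (noting $X\not\simeq\mathbb{P}^n$, since for $n\geq 2$ the class $nc_1(\mathcal{L})$ is not $(n+1)$ times an integral class) one gets $X\simeq Q_n\subset\mathbb{P}^{n+1}$ with $\mathcal{L}\simeq\mathcal{O}_{Q_n}(1)$. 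I expect this last geometric step — constructing the isomorphism from the family of minimal curves, and especially recognizing the quadric — to be the main obstacle.

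An alternative, closer to the original argument and avoiding Mori theory, is an induction on $n$: after reducing to $-K_X=(n+1)\mathcal{L}$ (resp.\ $-K_X=n\mathcal{L}$), use Kodaira vanishing and Riemann--Roch to see $\mathcal{L}$ is base-point free with $h^0(X,\mathcal{L})=n+1$ (resp.\ $n+2$); a general $H\in|\mathcal{L}|$ is smooth (Bertini) with $-K_H=(\dim H+1)\mathcal{L}|_H$ (resp.\ $(\dim H)\mathcal{L}|_H$) by adjunction, so by induction $H\simeq\mathbb{P}^{n-1}$ (resp.\ $Q_{n-1}$), the base case being a genus-zero curve; one then reconstructs $X$ as a degree-one (resp.\ degree-two) subvariety of $\mathbb{P}^n$ (resp.\ $\mathbb{P}^{n+1}$) via $|\mathcal{L}|$. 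Here the obstacle is establishing base-point-freeness (and ultimately very ampleness) of $\mathcal{L}$ and the final reconstruction step.
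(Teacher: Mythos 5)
This theorem is quoted from the literature: the paper states it with a citation to Kobayashi--Ochiai and supplies no proof of its own, so there is no in-paper argument to compare your proposal against. Judged on its own terms, your numerical bookkeeping is correct: a minimal dominating family automatically satisfies $-K_X\cdot C\le n+1$, the hypotheses then force $\mathcal{L}\cdot C=1$ and $-K_X\cdot C=n+1$ (resp.\ $n$), hence $d=n-1$ (resp.\ $d=n-2$) in the standard splitting of $f^\ast T_X$; and the reduction of the first case to the Cho--Miyaoka--Shepherd-Barron theorem is a legitimate (if anachronistic) derivation of $X\simeq\mathbb{P}^n$.

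The quadric half is where the proposal does not close. The characterization you need --- $X$ Fano, $-K_X\cdot C\ge n$ for every rational curve through a general point, $X\not\simeq\mathbb{P}^n$, hence $X\simeq Q_n$ --- is not in Cho--Miyaoka--Shepherd-Barron; it is Miyaoka's numerical characterization of hyperquadrics, whose proof was only completed by Dedieu and H\"oring and which runs through the recognition problem for a quadric variety of minimal rational tangents. The step you yourself flag as ``the main obstacle'' (showing the VMRT is a smooth $(n-2)$-dimensional quadric and recognizing $Q_n$ from it) is precisely the content you have not supplied, and you are deriving a 1973 theorem from strictly stronger and far harder later results; one would also have to check that those results do not themselves invoke Kobayashi--Ochiai, which is used as a routine black box throughout the minimal-rational-curves literature (for instance to identify Fano manifolds of large index), so there is a genuine circularity risk. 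Your alternative route is essentially Kobayashi--Ochiai's actual argument, and the obstacles you name --- base-point freeness and very ampleness of $\mathcal{L}$, and reconstructing $X$ from its degree-one (resp.\ degree-two) image under $|\mathcal{L}|$ --- are exactly where that proof does all of its work. In short: a correct skeleton with the load-bearing steps either outsourced or left open, which for the purposes of this paper is acceptable only because the theorem is being cited, not proved.
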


Most recently, the following conjecture of A. Beauville \cite{MR1738060} was verified by Araujo, Druel, and Kov\'acs:

\begin{theorem}\label{T:ADK}\cite{MR2439607}
Let $X$ be a smooth complex projective variety of dimension $n$, and let $\mathcal{L}$ be an ample line bundle on $X$. If $H^0(X, \wedge^p T_X \otimes \mathcal{L}^{-p}) \neq 0$ for some positive integer $p$, then either $(X, \mathcal{L}) \simeq (\mathbb{P}^n, \mathcal{O}_{\mathbb{P}^n}(1))$ or $p = n$ and $(X, \mathcal{L}) \simeq (Q_p, \mathcal{O}_{Q_p}(1))$,  where $Q_p$ denotes a smooth quadric hypersurface in $\mathbb{P}^{p+1}$.
\end{theorem}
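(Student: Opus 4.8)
The plan is to translate the cohomological hypothesis into the language of rational curves and reduce to the classical characterizations of $\mathbb{P}^n$ and $Q_n$. A nonzero section of $\wedge^p T_X \otimes \mathcal{L}^{-p}$ is the same datum as a nonzero sheaf map $\tau\colon \mathcal{L}^{\otimes p} \to \wedge^p T_X$. First I would show that $X$ is uniruled. If it were not, Miyaoka's generic semipositivity theorem would give, for a general member $C$ of the family of complete intersection curves cut out by large multiples of an ample divisor, that $\Omega^1_X|_C$ is nef; hence $\wedge^p \Omega^1_X|_C$ is nef, and its dual $\wedge^p T_X|_C = (\wedge^p \Omega^1_X|_C)^\vee$ has every sub-line-bundle of non-positive degree. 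But for such a general $C$ the restriction $\tau|_C$ is nonzero, since $\tau \neq 0$ and these curves sweep out $X$; so $\tau|_C$ exhibits $\mathcal{L}^{\otimes p}|_C$, of degree $p(\mathcal{L}\cdot C) > 0$, as a sub-line-bundle of $\wedge^p T_X|_C$, a contradiction. Thus $X$ is uniruled and carries a minimal covering family of rational curves.

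Next I would extract numerical constraints from a general minimal curve $C$. By Mori's theorem its restricted tangent bundle has the balanced split form $\splitTX$, where $d + 2 = -K_X \cdot C$ and $d - 1 = \dim \mathcal{C}_x$ is the dimension of the variety of minimal rational tangents at a general point $x$. Restricting $\tau$ to $C$ and projecting onto the rank-one summands of $\wedge^p T_X|_C$ forces some such summand to have degree at least $p(\mathcal{L}\cdot C)$. The maximal degree of a summand is $\min(p+1,\, d+2)$, so $p(\mathcal{L}\cdot C) \leq p+1$; this yields $\mathcal{L}\cdot C = 1$ whenever $p \geq 2$, together with $-K_X \cdot C = d+2 \geq p$, i.e. $d \geq p-2$.

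Two extreme values of $p$ can then be dispatched directly. When $p = 1$ the hypothesis says precisely that the ample line bundle $\mathcal{L}$ embeds as a subsheaf of $T_X$, which is Wahl's Theorem~\ref{T:Wahl}, giving $(\mathbb{P}^n, \mathcal{O}_{\mathbb{P}^n}(1))$ or $(\mathbb{P}^1, \mathcal{O}_{\mathbb{P}^1}(2)) = (Q_1, \mathcal{O}_{Q_1}(1))$. When $p = n$ we have $\wedge^n T_X = \mathcal{O}_X(-K_X)$, so the section means $-K_X - n\mathcal{L}$ is effective, say $-K_X \equiv n\mathcal{L} + D$ with $D \geq 0$; if $D \equiv 0$ then $c_1(X) = n\,c_1(\mathcal{L})$ and Kobayashi–Ochiai (Theorem~\ref{T:KO}) gives $X \simeq Q_n$ with $\mathcal{L} = \mathcal{O}_{Q_n}(1)$, while if $D \not\equiv 0$ a further reduction (passing to an extremal contraction, or bounding via the pseudo-index using $-K_X\cdot C \leq n+1$ and $\mathcal{L}\cdot C = 1$) should force $c_1(X) \geq (n+1)\,c_1(\mathcal{L})$ and hence $X \simeq \mathbb{P}^n$, again via Theorem~\ref{T:KO}.

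The heart of the argument is the remaining range $2 \leq p \leq n-1$, where the conclusion must be $X \simeq \mathbb{P}^n$; equivalently, I must upgrade $d \geq p-2$ to $d = n-1$, i.e. show $\mathcal{C}_x = \mathbb{P}(T_x X)$, and then invoke the Cho–Miyaoka–Shepherd-Barron characterization ($-K_X\cdot C = n+1 \Rightarrow \mathbb{P}^n$). The mechanism is to study $\tau$ on the universal family $e\colon \mathcal{U} \to X$, $\pi\colon \mathcal{U} \to \mathcal{H}$: the section determines a support subsheaf $\mathcal{F} \subseteq T_X$ whose determinant contains the positive line bundle $\mathcal{L}^{\otimes p}$, and one aims to show that along the curves of the family $\mathcal{F}$ must absorb the minimal rational tangent directions, so that a non-spanning VMRT would produce a proper positive foliation, contradicting the structure theory of Fano foliations. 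This forces $\mathcal{C}_x$ to span $\mathbb{P}(T_x X)$ and also excludes the quadric in this range (consistent with the quadric appearing only at $p = n$). Controlling $\tau$ when it fails to be decomposable, and ruling out the intermediate VMRT dimensions by second-fundamental-form and prolongation techniques in the spirit of Hwang–Mok, is where the real difficulty lies and constitutes the main obstacle; by comparison, the uniruledness reduction and the numerology above are comparatively formal.
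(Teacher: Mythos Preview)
The paper does not prove Theorem~\ref{T:ADK}; it is quoted from \cite{MR2439607} (Araujo--Druel--Kov\'acs) and used as a black box, most notably at the end of Case~I in Section~\ref{S:meat}. There is therefore no proof in this paper to compare your proposal against.

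Evaluating your outline on its own merits: the opening steps are sound and in fact parallel what the present paper does for its own result---the uniruledness argument via Miyaoka's generic semipositivity is essentially Lemma~\ref{L:Xuniruled}, and the degree bookkeeping on a general minimal curve mirrors Lemma~\ref{L:splitTX}. The reductions at $p=1$ (Wahl) and $p=n$ with $D \equiv 0$ (Kobayashi--Ochiai) are also fine.

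There are, however, two genuine gaps. First, in the $p=n$ case with $D \not\equiv 0$ you write that ``a further reduction \dots\ should force $c_1(X) \geq (n+1)c_1(\mathcal{L})$''; this is not an argument. From $-K_X \cdot C \leq n+1$ and $\mathcal{L}\cdot C = 1$ you only get a bound on one curve class, not the divisorial inequality needed for Theorem~\ref{T:KO}, and without controlling the Picard number you cannot pass from one to the other. Second, and more seriously, the range $2 \leq p \leq n-1$ is not proved at all: you correctly identify the target ($\mathcal{C}_x = \mathbb{P}(T_xX)$) but the mechanism you propose---that a non-spanning VMRT would yield a ``proper positive foliation, contradicting the structure theory of Fano foliations,'' to be resolved by ``prolongation techniques in the spirit of Hwang--Mok''---is a list of tools, not a chain of implications. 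The actual work in \cite{MR2439607} is to control the locus where the section $\tau$ is decomposable and to show that the resulting distribution interacts with the $H$-rationally connected quotient in a way that forces the projective-space conclusion; none of that is present here, and you yourself flag it as ``the main obstacle.'' As written, the proposal is a reasonable plan but not a proof.
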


Theorems \ref{T:Mori}-\ref{T:ADK} are comparable in their direction but incongruous in the sense that no one of them implies all the others. Conjecture \ref{Q:main} is appealing since it simultaneously implies all of them: Mori's theorem is covered by the case $p = 1$, $\mathcal{E} = T_X$, Wahl's theorem by $p = 1$, $r = 1$, and the result of Andreatta-Wi\'sniewski by taking $p = 1$. The main theorem of \cite{MR2439607} is covered by setting $\mathcal{E} = \mathcal{L}^{\oplus r}$ where $r = p$, and \cite{MR0316745} by setting $\mathcal{E} = \mathcal{L}^{\oplus n}$ and $\mathcal{E} = \mathcal{L}^{\oplus n-1} \oplus \mathcal{L}^{\otimes 2}$.  

\begin{remark}
Notice that \ref{Q:main} also generalizes \ref{T:ADK} to the case where $\wedge^p T_X$ contains a product of $p$ distinct ample line bundles: $(\mathcal{L}_1 \otimes \mathcal{L}_2 \otimes \dots \otimes \mathcal{L}_p) \subseteq \wedge^p T_X$.
\end{remark}

It is easy to check that Conjecture \ref{Q:main} holds in some simple cases, for example, when the dimension of $X$ is small: If $\dim X = 1$, the only choice for the integer $p$ is $p = 1$. In this case, Conjecture \ref{Q:main} follows from Theorem \ref{T:Wahl} (and also Theorem \ref{T:AW01}.) When $\dim X = 2$, Conjecture \ref{Q:main} follows easily from the following theorem:

\begin{theorem}\label{T:dim2Helper}
Let $X$ be a smooth complex projective variety of dimension $2$, and assume that $-K_X = A + F$ where $F$ is an effective divisor and $A$ is an ample divisor such that $A \cdot C \geq 2$ for every smooth rational curve $C \subseteq X$, $C \simeq \mathbb{P}^1$. Then either $X \simeq \mathbb{P}^2$ or $X \simeq \mathbb{P}^1 \times \mathbb{P}^1$.
\end{theorem}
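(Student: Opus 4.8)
The plan is to prove that $X$ must be a minimal surface, and then to pin it down using the positivity hypothesis on $A$. First I would observe that $-K_X = A + F$ is big, being a sum of an ample and an effective divisor; hence $\kappa(X) = -\infty$, because if $mK_X$ were effective for some $m > 0$ then $-mK_X = m(A + F)$ would be numerically equivalent to an anti-effective divisor, which is absurd since its intersection with an ample class is positive. By the Enriques--Kodaira classification the minimal model $X_{\min}$ of $X$ is therefore either $\mathbb{P}^2$ or a geometrically ruled surface $\mathbb{P}(\mathcal{E}) \to B$.

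The second step is to reduce to the case in which $X$ itself is minimal. The key point is that the hypotheses descend under the contraction of a $(-1)$-curve: if $E \subset X$ is a $(-1)$-curve and $\pi \colon X \to X'$ is its contraction, then $-K_{X'} = A' + F'$ where $A' := \pi_* A$ is ample, $F' := \pi_* F$ is effective, and $A' \cdot C' \geq 2$ for every smooth rational curve $C' \subset X'$. Effectivity of $F'$ and the identity $-K_{X'} = A' + F'$ follow from $\pi_* K_X = K_{X'}$; ampleness of $A'$ I would check by Nakai--Moishezon, using $(\pi^* A')^2 = A^2 + (A \cdot E)^2 > 0$ and the projection-formula identity $A' \cdot C' = A \cdot \widetilde{C'} + (A \cdot E)(E \cdot \widetilde{C'}) > 0$, where $\widetilde{C'}$ denotes the strict transform; the same identity together with $A \cdot E \geq 2$ yields $A' \cdot C' \geq 2$, once one notes that the strict transform of a smooth rational curve is again a smooth rational curve. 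Iterating along a factorisation of $X \to X_{\min}$ into $(-1)$-contractions, $X_{\min}$ satisfies the hypotheses, so by the minimal case below $X_{\min} \simeq \mathbb{P}^2$ or $\mathbb{P}^1 \times \mathbb{P}^1$. If $X$ were not minimal, the surface lying immediately above $X_{\min}$ in this factorisation would be a one-point blow-up $\mathrm{Bl}_p(\mathbb{P}^2)$ or $\mathrm{Bl}_p(\mathbb{P}^1 \times \mathbb{P}^1)$ satisfying the hypotheses; but a short numerical computation rules this out — applying $A \cdot C \geq 2$ to the exceptional curve and to suitable strict transforms (of a line, resp.\ of rulings, through $p$) bounds the coefficients of $A$ enough to force $F \cdot N < 0$ for $N$ the nef pullback of a line (resp.\ a ruling), contradicting $F \geq 0$. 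Hence $X$ is minimal.

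It remains to treat the minimal case. If $X \simeq \mathbb{P}^2$ we are done, so suppose $X = \mathbb{P}(\mathcal{E}) \to B$ is geometrically ruled with fibre class $f$. A fibre is a smooth rational curve with $-K_X \cdot f = 2$, so $A \cdot f \geq 2$, $F \cdot f \geq 0$ and $A \cdot f + F \cdot f = 2$ together force $A \cdot f = 2$ and $F \cdot f = 0$; being effective with zero intersection with the nef class $f$, the divisor $F$ is then a non-negative combination of fibres, so $F \equiv \delta f$ with $\delta \geq 0$. From $(-K_X)^2 = 8\bigl(1 - g(B)\bigr)$ and $(A + \delta f)^2 = A^2 + 4\delta$ one obtains $A^2 = 8\bigl(1 - g(B)\bigr) - 4\delta$; if $g(B) \geq 1$ this gives $A^2 \leq 0$, contradicting ampleness of $A$. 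So $g(B) = 0$ and $X \simeq \mathbb{F}_e$ for some $e \geq 0$. Writing $A \equiv 2C_0 + \beta f$ with $C_0$ the section satisfying $C_0^2 = -e$, the curve $C_0$ is smooth rational, so $A \cdot C_0 = \beta - 2e \geq 2$; meanwhile $F = -K_X - A \equiv (e + 2 - \beta)f$ is effective, which forces $e + 2 - \beta \geq 0$ (else $F \cdot A < 0$). Combining, $2e + 2 \leq \beta \leq e + 2$, whence $e = 0$ and $X \simeq \mathbb{F}_0 \simeq \mathbb{P}^1 \times \mathbb{P}^1$.

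The step I expect to be the main obstacle is the reduction to the minimal case: one must check carefully that all four hypotheses survive a $(-1)$-contraction (ampleness of $\pi_* A$ being the delicate point) and that a single blow-up of $\mathbb{P}^2$ or of $\mathbb{P}^1 \times \mathbb{P}^1$ genuinely violates them. Once minimality is in hand, the final identification is a routine intersection computation on ruled surfaces.
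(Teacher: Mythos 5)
Your argument is correct, but it follows a genuinely different route from the paper's. The paper never descends the hypotheses to the minimal model: its engine is the single observation that adjunction plus $A\cdot C\geq 2$ forces $F\cdot C<0$, hence $C\subseteq\operatorname{Supp}F$, for every smooth rational curve with $C^2<0$. From this it deduces by infinite descent ($nG\subseteq F$ for all $n$) that every fiber of the ruling $X\to B$ induced by the minimal model is irreducible, so $X$ is \emph{already} geometrically ruled; it then shows $F\cdot\mathfrak{f}=0$, so $F$ is a sum of fibers, $-K_X$ is ample, $X$ is a rational Hirzebruch surface, and the same claim applied to the section $C_0\not\subseteq F$ gives $C_0^2\geq 0$, i.e.\ $e=0$. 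You instead push all four hypotheses down through each $(-1)$-contraction, classify the minimal model by intersection numerics ($K^2=8(1-g)$ against $A^2>0$, then the coefficient comparison $2e+2\leq\beta\leq e+2$ on $\mathbb{F}_e$), and exclude non-minimality by showing the one-point blow-ups of $\mathbb{P}^2$ and $\mathbb{P}^1\times\mathbb{P}^1$ violate the hypotheses. I checked the delicate points: $\pi^*\pi_*A=A+(A\cdot E)E$ does give $(\pi_*A)^2=A^2+(A\cdot E)^2>0$ and $\pi_*A\cdot C'=A\cdot\widetilde{C'}+(A\cdot E)\,\mathrm{mult}_p(C')\geq A\cdot\widetilde{C'}\geq 2$, and the blow-up exclusions work (on $\mathbb{F}_1$ one gets $a\geq b+2\geq 4$ and $F\cdot H=3-a<0$ with $H$ the nef pullback of a line; similarly $F\cdot f_1=2-a_2<0$ on $\mathrm{Bl}_p(\mathbb{P}^1\times\mathbb{P}^1)$). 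The trade-off: your version is more elementary and self-contained, resting only on explicit Picard-lattice computations on Hirzebruch surfaces, at the cost of verifying that ampleness and the bound $A\cdot C\geq 2$ survive contraction; the paper's version avoids that bookkeeping by one structural claim, at the cost of the irreducible-fibers descent and a detour through the Del Pezzo property to see that the base is $\mathbb{P}^1$ and $\mathcal{E}'$ is decomposable (both of which your $K^2$ computation and Grothendieck splitting replace more directly).
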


\begin{proof}
First notice that $X$ has negative Kodaira dimension since $-K_X \cdot C > 0$ for every general curve $C \subseteq X$. Let $X \longrightarrow X_{\text{min}}$ be a minimal model obtained by blowing down sufficiently many $(-1)$-curves. Since $\kappa(X) < 0$, $X_{\text{min}}$ is isomorphic to either $\mathbb{P}^2$ or a ruled surface over a curve $B$. Before addressing each case, I prove the following claim that will be used in the rest of the proof:

\begin{claim}\label{C:inF}
Let $X$, $F$, and $A$ be as in the statement of Theorem \ref{T:dim2Helper} above. If $C \subseteq X$ is a curve such that $C \simeq \mathbb{P}^1$ and $C^2 < 0$, then $F \cdot C < 0$ and hence $C \subseteq F$.
\end{claim}

\begin{proof}
The following computation implies the claim:
\[
F \cdot C = (-K_X - A) \cdot C = (-K_X \cdot C) - (A \cdot C) \leq (2 + C^2) - 2 < 0
\]
Here the first inequality follows from adjunction and the initial assumption on the ample divisor $A$.
\end{proof}

Continuing with the proof of Theorem \ref{T:dim2Helper}, assume that $X \not\simeq \mathbb{P}^2$. It follows that $X$ admits a morphism to a ruled surface $Y \longrightarrow B$: If $X_{\text{min}} \simeq \mathbb{P}^2$ then $Y$ is the blow-up of $\mathbb{P}^2$ at a single point. Otherwise $Y \simeq X_{\text{min}}$. The ruling $Y \longrightarrow B$ induces a morphism $\pi: X \longrightarrow B$. I will show by contradiction that the fibers of $\pi$ are irreducible, hence $X$ itself is ruled: Suppose that $G$ is a reducible fiber of $\pi$. Then $G$ may be written as a sum $G = \Sigma G_i$ where $G_i \simeq \mathbb{P}^1$ and $G_i^2 < 0$. By \ref{C:inF}, each $G_i$ (and hence $G$) is contained in the effective divisor $F$. Also, as $G$ is a fiber, $G \cdot G_i = 0$. It follows from \ref{C:inF} that $(F - G) \cdot G_i < 0$ for each $G_i$, therefore $G$ must be contained in $F - G$, i.e., $F$ contains $2G$. Repeating this computation, one may show that $nG \subseteq F$ for any positive integer $n$, but this is a contradiction since $F$ is a fixed effective divisor. Therefore the fibers of $\pi$ are irreducible as claimed, and $\pi: X \longrightarrow B$ itself must be a ruling of $X$.

Using the notation of \cite[V.2.8]{MR0463157}, there exists a distinguished locally free sheaf $\mathcal{E}^\prime$ of rank 2 and degree $-e$ such that $X \simeq \mathbb{P}(\mathcal{E}^{\prime})$. Furthermore, in this case there is a section $\sigma: B \longrightarrow X$ with image $C_0$ such that $\mathcal{L}(C_0) \simeq \mathcal{O}_{\mathbb{P}(\mathcal{E}^\prime)}(1)$. Continuing with the notation of \cite[V.2]{MR0463157}, let $\mathfrak{f}$ be a fiber of $\pi$. In particular, recall that $C_0 \cdot \mathfrak{f} = 1$ and $\mathfrak{f}^2 = 0$. By the assumption on $A$ and the fact that $\mathfrak{f}$ is nef, one has: $-K_X \cdot \mathfrak{f} = A \cdot \mathfrak{f} + F \cdot \mathfrak{f} \geq 2$ . On the other hand, by \cite[V.2.11]{MR0463157}, $-K_X \cdot \mathfrak{f} = 2$. Therefore $A \cdot \mathfrak{f} = 2$ and $F \cdot \mathfrak{f} = 0$, and the latter inequality implies that $F = m\mathfrak{f}$ is nef. It follows that $-K_X$ is ample, (it is the sum of an ample and a nef divisor), and therefore $X$ is a Del Pezzo surface. This means that $X$ is both ruled and rational, hence it is a Hirzebruch surface, i.e., $\mathcal{E}^\prime$ is decomposible. By \cite[2.12]{MR0463157}, it follows that $e \geq 0$. On the other hand, since $C_0 \nsubseteq F$, \ref{C:inF} implies that $C_0^2 \geq 0$. But $e = -C_0^2$ by \cite[V.2.9]{MR0463157}, therefore $e = C_0^2 = 0$. The only Hirzebruch surface with $e = 0$ is $\mathbb{P}^1 \times \mathbb{P}^1$, and this completes the proof of Theorem \ref{T:dim2Helper}.
\end{proof}

\begin{corollary}\label{C:dim2Case}
Conjecture \ref{Q:main} holds when $\dim X = 2$.
\end{corollary}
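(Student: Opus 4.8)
The plan is to deduce this directly from Theorem \ref{T:dim2Helper} by producing a decomposition $-K_X = A + F$ of the required form out of the hypothesis $\wedge^p \mathcal{E} \subseteq \wedge^p T_X$. Since $\dim X = 2$, the only possibilities are $p = 1$ (with $r \geq 1$) or $p = 2$ (with $r = 2$). First I would dispose of the case $p = 1$: then $\mathcal{E} \subseteq T_X$ with $\mathcal{E}$ ample, so Theorem \ref{T:AW01} applies verbatim and gives $X \simeq \mathbb{P}^2$, which is one of the allowed conclusions. So the substantive case is $p = 2 = n = r$, i.e.\ $\wedge^2 \mathcal{E} \subseteq \wedge^2 T_X$ with $\mathcal{E}$ an ample rank-$2$ bundle. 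Here $\wedge^2 T_X = -K_X$ is a line bundle and $\wedge^2 \mathcal{E} = \det \mathcal{E}$ is an ample line bundle, so the inclusion of sheaves says precisely that $-K_X = \det\mathcal{E} + F$ for some effective divisor $F$ (the vanishing locus of the section of $-K_X \otimes (\det\mathcal{E})^{-1}$ corresponding to the inclusion).

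The main point is then to verify that $A := \det \mathcal{E}$ satisfies $A \cdot C \geq 2$ for every smooth rational curve $C \simeq \mathbb{P}^1$ in $X$, so that Theorem \ref{T:dim2Helper} applies and yields $X \simeq \mathbb{P}^2$ or $X \simeq \mathbb{P}^1 \times \mathbb{P}^1$, both of which appear in the conclusion of Conjecture \ref{Q:main} when $\dim X = 2$ (note $\mathbb{P}^1\times\mathbb{P}^1 \simeq Q_2$). For this I restrict to $C \simeq \mathbb{P}^1$: the inclusion $\wedge^2\mathcal{E} \hookrightarrow \wedge^2 T_X$ restricts to an inclusion $(\det\mathcal{E})|_C \hookrightarrow (\wedge^2 T_X)|_C$ of line bundles on $\mathbb{P}^1$ away from the finitely many points where it degenerates, hence $\deg\big((\det\mathcal{E})|_C\big) \le \deg\big((\wedge^2 T_X)|_C\big)$, i.e.\ $A \cdot C \le -K_X \cdot C$. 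That alone is not enough; I also need a \emph{lower} bound of $2$ on $A\cdot C$. Since $\mathcal{E}$ is ample of rank $2$, $\mathcal{E}|_C \simeq \mathcal{O}_{\mathbb{P}^1}(a_1)\oplus\mathcal{O}_{\mathbb{P}^1}(a_2)$ with $a_1, a_2 \ge 1$, so $A \cdot C = \deg(\det\mathcal{E}|_C) = a_1 + a_2 \ge 2$. This is exactly the hypothesis needed.

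The step I expect to require the most care is confirming that the sheaf inclusion $\wedge^2\mathcal{E}\subseteq\wedge^2 T_X$ genuinely forces $-K_X - \det\mathcal{E}$ to be effective (rather than merely a line bundle with a nonzero map into $-K_X$ whose cokernel could have torsion issues) — but since both are line bundles on the smooth surface $X$, a nonzero map $\det\mathcal{E} \to -K_X$ is injective and its cokernel is $\mathcal{O}_D$ for an effective divisor $D$ with $\mathcal{O}_X(D) = -K_X \otimes (\det\mathcal{E})^{-1}$, so $F := D$ works. Everything else is a direct substitution into Theorem \ref{T:dim2Helper}, followed by matching $\mathbb{P}^1 \times \mathbb{P}^1$ with the quadric $Q_2$; I would close by remarking that in the $p = 2$ case the theorem's conclusion ``$p = n$ and $X \simeq Q_p$'' is met, so Conjecture \ref{Q:main} holds in dimension $2$.
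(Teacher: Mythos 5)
Your proof is correct and follows essentially the same route as the paper: dispose of $p=1$ by the known characterizations, then for $p=2$ write $-K_X = \det\mathcal{E} + F$ with $F$ effective, check $\det\mathcal{E}\cdot C \geq 2$ for rational curves via ampleness of $\mathcal{E}$, and invoke Theorem~\ref{T:dim2Helper}. The only cosmetic differences are that you cite Theorem~\ref{T:AW01} rather than Theorem~\ref{T:Wahl} for the $p=1$ case (which is in fact the more appropriate citation when $r>1$) and that you assume $r=2$ when $p=2$ without comment (harmless, since for $r>2$ the rank-$\binom{r}{2}$ bundle $\wedge^2\mathcal{E}$ cannot inject into the line bundle $\wedge^2 T_X$, so that case is vacuous).
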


\begin{proof}If $\dim X = 2$, there are two choices for the integer $p$. If $p = 1$, Conjecture \ref{Q:main} follows from Theorem \ref{T:Wahl}, so we may assume that $p = 2$. Over a field of characteristic zero, the wedge product of an ample vector bundle is again ample \cite[5.3]{MR0193092}, so the condition $\wedge^2 \mathcal{E} \subseteq \wedge^2 T_X$ implies that $\omega_X^{-1}$ contains an ample line bundle. In particular, one may write $-K_X = A + F$ where $A = \mbox{c}_1(\wedge^2 \mathcal{E})$ is the correspondng ample divisor and $F$ is an effective divisor. Notice that $A \cdot C \geq 2$ for every smooth rational curve $C \subseteq X$, $C \simeq \mathbb{P}^1$: Since $\mathcal{E}$ is ample, the degree of $\mathcal{E}|_C = A|_C$ is bounded below by the rank of $\mathcal{E}$. Now Theorem 1.7 shows that Conjecture \ref{Q:main} holds when $\dim X = 2$.
\end{proof} 


In this paper I will show that Conjecture \ref{Q:main} holds for all varieties with Picard number 1. The paper is organized as follows: Section 2 is devoted to gathering necessary definitions and results about minimal covering families of rational curves. Section 3 will cover some auxillary results needed for the main proof. The proof of Theorem \ref{T:main} is covered in Section 4.  
\newline
\newline
\textit{Notation:} I will follow the notation of \cite{MR1440180} in the discussion of rational curves. By a vector bundle I mean a locally free sheaf; a line bundle is an invertible sheaf. I will denote by $\mathbb{P}(V)$ the natural projectivization of a vector space $V$. A point $x \in X$ is general if it is contained in a dense open subset of $U \subseteq X$ where $U$ is a fixed open subset determined by the context. Throughout the paper I will be working over the field of complex numbers.
\newline
\newline
\textit{Acknowledgments:} I am immensely grateful to my advisor, \textit{S\'andor Kov\'acs}, for his attention, guidance, and many insights. I would also like to thank \textit{Carolina Araujo} for very helpful discussions and suggestions that improved the content of this paper. 
\newline
\newline
\textit{Note:} Upon completion of this paper, I learned of a somewhat related result by Matthieu Paris \cite{paris-2010}.

\section{Rational Curves of Minimal Degree on Uniruled Varieties}\label{S:Background}

The proof of the main theorem relies on studying rational curves of minimal degree on $X$. Starting with \cite{MR554387}, many tools have been developed for analyzing families of rational curves on uniruled varieties; for the reader's convenience I summarize the most important developments here. 

Let $X$ be a smooth complex projective variety. If $X$ is uniruled, one can find an irreducible component $H \subset \mbox{RatCurves}^n(X)$ such that the natural map $\mbox{Univ}_H \longrightarrow X$ is dominant. Such a component is called a \textit{dominating family} of rational curves on $X$. The component $H$ is called \textit{unsplit} if it is proper, and is called \textit{minimal} if the subfamily of curves parameterized by $H$ passing through a general point $x \in X$ is proper. A uniruled variety always admits a minimal dominating family of curves \cite[IV.2.4]{MR1440180}.

If $C \subset X$ is a rational curve on $X$ and $f: \mathbb{P}^1 \longrightarrow C \subseteq X$ is its normalization, the corresponding point in $\mbox{RatCurves}^n(X)$ is denoted by $[f]$. If $H$ is a minimal dominating family, then the splitting type of $f^\ast T_X$ for any general $[f] \in H$ is: 
\[
f^\ast T_X \simeq \splitTX
\]
where $d := \mbox{deg}(f^\ast T_X) - 2 \geq 0$ \cite[IV.2.9, IV.2.10]{MR1440180}. The ``positive part'' of $f^\ast T_X$ is the subbundle defined by:
\[
(f^\ast T_X)^+ := \mbox{im}[H^0(\mathbb{P}^1, f^\ast T_X(-1))\otimes\mathcal{O}_{\mathbb{P}^1}(1) \rightarrow f^\ast T_X] \hookrightarrow f^\ast T_X.
\]

If $H$ is a fixed minimal dominating family of rational curves on $X$, one can define an equivalence relation on the points of $X$ via $H$: Two points $x_1, x_2 \in X$ are $H$\textit{-equivalent} if they can be connected by a chain of rational curves parameterized by $H$. By \cite[IV.4.16]{MR1440180}, there exists a proper surjective morphism $\pi^\circ: X^\circ \longrightarrow Y^\circ$ from a dense open subset $X^\circ \subseteq X$ onto a normal variety $Y^\circ$ whose fibers are $H$-equivalence classes. The morphism $\pi^\circ$ is often called the $H$\textit{-rationally connected quotient} of $X$. If $Y^\circ$ is a point, then $X$ is called $H$\textit{-rationally connected}. An important fact used later is that when the Picard number of $X$ is 1, the $H$-rationally connected quotient is trivial:

\begin{proposition}\label{P:trivMRC}
Let $X$ be a smooth complex projective variety, $H$ a minimal dominating family of rational curves on $X$, and $\pi^\circ: X^\circ \longrightarrow Y^\circ$ the corresponding $H$-rationally connected quotient. If $\rho (X) = 1$, then $Y^\circ$ is a point.
\end{proposition}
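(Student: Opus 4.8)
The plan is to argue by contradiction: suppose $Y^\circ$ is positive-dimensional, and produce a nontrivial line bundle on $X$ that is not numerically proportional to an ample class, contradicting $\rho(X) = 1$. The source of such a line bundle is the $H$-rationally connected quotient $\pi^\circ \colon X^\circ \longrightarrow Y^\circ$ itself: after possibly shrinking $Y^\circ$, one can choose a very ample divisor $D^\circ$ on $Y^\circ$ and pull it back to a divisor on $X^\circ$. Since $X$ is smooth (hence factorial) and $X \setminus X^\circ$ has codimension $\geq 1$, the divisor $(\pi^\circ)^* D^\circ$ extends uniquely to an effective divisor $D$ on all of $X$. The key point is then to show that $D$ cannot be ample, in fact that $D \cdot \ell = 0$ for a curve $\ell$ lying in a general fiber of $\pi^\circ$ — but if $\rho(X) = 1$ then every nonzero effective divisor is ample, so $D$ ample forces $D \cdot \ell > 0$, a contradiction. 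This will show $Y^\circ$ must be a point.

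The key steps, in order: (1) Assume $\dim Y^\circ > 0$ and fix a general point $y \in Y^\circ$ with fiber $F_y = (\pi^\circ)^{-1}(y)$; by construction $F_y$ is an $H$-equivalence class, so it contains (the image of) a curve $\ell$ from the family $H$ through a general point of $F_y$. Since $H$ is dominating and $\pi^\circ$ has $H$-equivalence classes as fibers, such an $\ell$ lies entirely in a single fiber. (2) Choose a very ample $D^\circ$ on a suitable smooth locus of $Y^\circ$ and set $D = \overline{(\pi^\circ)^* D^\circ}$, the Zariski closure, which is a well-defined effective Cartier divisor on $X$ because $X$ is smooth and the complement of $X^\circ$ has codimension at least one. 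Moreover $D$ is nonzero, since $D^\circ \neq 0$ and $\pi^\circ$ is dominant. (3) Compute $D \cdot \ell$: since $\ell$ is contained in a fiber of $\pi^\circ$ and $D$ restricted to $X^\circ$ is $(\pi^\circ)^* D^\circ$, and $\ell$ meets $X^\circ$ (we chose $\ell$ through a general point), the intersection $D \cdot \ell$ equals $(\pi^\circ)^* D^\circ \cdot \ell = D^\circ \cdot (\pi^\circ)_*\ell = 0$ because $\pi^\circ$ contracts $\ell$ to the point $y$. (4) On the other hand, $\rho(X) = 1$ and $D$ effective and nonzero imply $D$ is ample, so $D \cdot \ell > 0$ — contradiction. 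Hence $\dim Y^\circ = 0$, i.e.\ $Y^\circ$ is a point.

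The main technical obstacle is step (2)–(3): making sense of "pull back a divisor along $\pi^\circ$ and extend it across $X \setminus X^\circ$" carefully, and checking that $\ell$ genuinely meets the open locus $X^\circ$ and lies in a single fiber there, so that the intersection number is literally computed on $X^\circ$ and equals zero. One needs $\ell$ general enough in $F_y$ (through a general point of $X$, so that $[f] \in H$ is general and $[f]$ lies over a general $y \in Y^\circ$) that its normalization factors through $X^\circ$; this is where the properness of the general $H$-equivalence class and the openness of $X^\circ$ are used. Everything else — the existence of $\ell$, the extension of divisors on a smooth variety, and the consequence of $\rho = 1$ — is standard. An alternative, essentially equivalent, route is to invoke directly that $\pi^\circ$ being nontrivial produces at least two numerically independent classes (the fiber class of $\ell$, which is extremal, and a divisor pulled back from $Y^\circ$), contradicting $\rho(X) = 1$; I would present whichever is shorter given the conventions already set up in Section~\ref{S:Background}.
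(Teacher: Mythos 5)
Your proposal is correct and follows essentially the same route as the paper's proof: pull back an ample divisor from $Y^\circ$, take its closure in $X$ to get a nonzero effective (hence, by $\rho(X)=1$, ample) divisor, and derive a contradiction from its zero intersection with a contracted curve of the family $H$ lying in a fiber inside $X^\circ$. The extra care you take in steps (2)--(3) about extending the divisor across $X\setminus X^\circ$ and ensuring the curve lies entirely in $X^\circ$ is exactly the implicit content of the paper's shorter argument.
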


\begin{proof}
Suppose that $Y^\circ$ is positive dimensional. Let $D_{Y^\circ}$ be an ample effective divisor on $Y^\circ$, $D_{X^\circ}$ its pullback on $X^\circ$ and $D_X$ the closure of $D_{X^\circ}$ in $X$. Since $\rho(X) = 1$, every effective divisor is ample, and it follows that every rational curve parameterized by $H$ has positive intersection with $D_X$. Let $C$ be a rational curve parameterized by $H$ and contained in $X^\circ$. By definition, $\pi^\circ$ contracts $C$ and hence $D_{X^\circ} \cdot C = 0$, a contradiction. Therefore $Y^\circ$ must be a point. 
\end{proof}

\begin{remark} 
The converse of Proposition \ref{P:trivMRC} is also true by \cite[IV.3.13.3]{MR1440180} if one assumes additionally that $H$ is unsplit, but this will not be needed here. 
\end{remark}

\begin{remark}
The equivalence relation above can be extended to a collection of families of rational curves $H_1, H_2, \dots, H_k$: Two points $x_1, x_2 \in X$ are $(H_1, H_2, \dots, H_k)$\textit{-equivalent} if they can be connected by a chain of rational curves parameterized by $H_1, H_2, \dots, H_k$. This induces a morphism on a dense open subset of $X$ with $(H_1, H_2, \dots, H_k)$-rationally connected fibers, called the $(H_1, H_2, \dots, H_k)$-rationally connected quotient of $X$.
\end{remark}

It is worth noting that a minimal dominating family $H$ may not always restrict to a minimal dominating family on the fibers of the $H$-rationally connected quotient. To be precise, if $X_y$ is a fiber of an $H$-rationally connected quotient of $X$ and $\iota$ is the natural map
\begin{equation}\label{ratMap}
\iota: \mbox{RatCurves}^n(X_y) \hookrightarrow \mbox{RatCurves}^n(X) 
\end{equation}
it is not always the case that $\iota^{-1}(H) \subseteq \mbox{RatCurves}^n(X_y)$ is irreducible:

\begin{example}\label{E:CounterEx} Let $Y \subseteq \mathbb{P}^9$ be the open subset parameterizing smooth quadric surfaces in $\mathbb{P}^3$, $X$ the corresponding open subset of the universal hypersurface in $\mathbb{P}^3 \times \mathbb{P}^9$, $\pi_1: X \longrightarrow \mathbb{P}^3$ and $\pi_2: X \longrightarrow Y \subseteq \mathbb{P}^9$ the restrictions of the usual projection morphisms. Let $C$ be a rational curve on $X$ corresponding to a line on a smooth quadric in $\mathbb{P}^3$. (In other words, $C$ has the property of being contracted by $\pi_2$ and having image equal to a line under $\pi_1$.) Let $H \subseteq \mbox{RatCurves}^n(X)$ be the irreducible component containing the point parameterizing $C$. 

I claim that $H$ is in fact a dominating family on $X$: First notice that $H$ parameterizes all the rational curves in $X$ that correspond to a line on a smooth quadric in $\mathbb{P}^3$. Indeed, if $C^\prime$ is any other rational curve with these properties, there exists a smooth deformation of $C$ to $C^\prime$ in $X$: The images of $C$ and $C^\prime$ in $\mathbb{P}^3$ are lines, say $L$ and $L^\prime$, and in $\mathbb{P}^3$ there exists a smooth deformation of $L$ to $L^\prime$ by a family of lines $\{L_t\}$ parameterized by $\mathbb{P}^1$. One can extend this to a family of smooth quadrics $\{Q_t\}$ parameterized over the same base such that $L_t \subset Q_t$ for each $t \in \mathbb{P}^1$. (For example, let $Q$ be the image of $\mathbb{P}^1 \times \mathbb{P}^1$ under the Segre embedding, and let $L$ be a distinguished line on $Q$. There exists a one-parameter family of automorphisms $\{\alpha_t\}$ of $\mathbb{P}^3$ such that $\alpha_t(L) = L_t$ for each $t \in \mathbb{P}^1$, (just choose an  appropriate non-trivial morphism $\mathbb{P}^1 \longrightarrow \mbox{Aut}(\mathbb{P}^3)$), and now the family $\{Q_t := \alpha_t(Q) \phantom{i}|\phantom{i} t \in \mathbb{P}^1\}$ has the desired properties.) Since $X$ is covered by the rational curves corresponding to the lines on the smooth quadrics of $\mathbb{P}^3$, $H$ is a dominating family on $X$.

Next notice that the $H$-rationally connected quotient is just $\pi_2: X \longrightarrow Y$: On one hand, by construction, every rational curve parameterized by $H$ is contained in a fiber of $\pi_2$. On the other hand, the fibers of $\pi_2$ are just the smooth quadrics in $\mathbb{P}^3$ and each is rationally connected by the lines it contains.

Finally, observe that the restriction of $H$ to any fiber cannot be a minimal dominating family: There are two minimal dominating families on any $\mathbb{P}^1 \times \mathbb{P}^1$, (namely the two families of lines), and the restriction of $H$ to any fiber will contain both of them.
\end{example}

\begin{remark}\label{R:Pic1} The above example also shows that one cannot assume in general that the fibers of the $H$-rationally connected quotient have Picard number 1, even when $H$ is unsplit. A necessary condition on $H$ for the fibers to have Picard number 1 is given by \cite[2.3]{MR2439607}.
\end{remark}

Next, recall the definition of the \textit{variety of minimal rational tangents}: If $x \in X$ is a general point of $X$, let $H_x$ denote the normalization of the subscheme of $H$ parameterizing curves passing through $x \in X$. For general $x \in X$, $H_x$ is a smooth projective variety of dimension $d := \mbox{deg}(f^\ast T_X) - 2$ \cite[II.1.7, II.2.16]{MR1440180}. There exists a map $\tau_x: H_x \dashrightarrow \mathbb{P}(T_xX)$ called the \textit{tangent map} defined by sending a curve that is smooth at $x \in X$ to its corresponding tangent direction at $x$. The closure of the image of $\tau_x$ in $\mathbb{P}(T_xX)$ is called the \textit{variety of minimal rational tangents} at $x$ and is denoted $\mathcal{C}_x \subseteq \mathbb{P}(T_xX)$. The tangent map is actually the normalization morphism of $\mathcal{C}_x$, a fact proved by S. Kebekus \cite{MR1874114} and J. Hwang  and N. Mok \cite{MR2128297}:

\begin{theorem}\label{T:CxNormal} \hspace*{1em}

\begin{enumerate-p}
\item \cite{MR1874114} The tangent map $\tau_x: H_x \dashrightarrow \mathcal{C}_x$ is a finite morphism.

\item \cite{MR2128297} The tangent map $\tau_x: H_x \dashrightarrow \mathcal{C}_x$ is birational, hence it is the normalization.
\end{enumerate-p}
\end{theorem}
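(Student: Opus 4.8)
The plan is to prove the two parts separately, recovering the arguments of Kebekus and of Hwang--Mok, and then combine them with the smoothness of $H_x$ recalled above to identify $\tau_x$ with the normalization of $\mathcal{C}_x$.

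For the first part, the initial step is to promote $\tau_x$ from a rational map to an honest morphism: one must show that for general $x \in X$, \emph{every} curve $\ell$ parameterized by $H_x$ is smooth at $x$, so that a well-defined tangent direction in $\mathbb{P}(T_xX)$ is assigned to every point of $H_x$. I would deduce this from minimality of $H$ via bend-and-break: curves through a general point are free, their normalizations are immersions (the splitting type of $f^\ast T_X$ is the standard one), and a degeneration argument rules out a node or worse at the general point $x$, since otherwise one could degenerate $\ell$ while keeping $x$ on it and split off a rational curve through $x$ of strictly smaller $H$-degree, contradicting the properness of $H_x$. Once $\tau_x$ is a morphism, finiteness is proved by contradiction: if $\tau_x$ contracted an irreducible curve $T \subseteq H_x$, the universal family restricted over the normalization of $T$ would be a ruled surface whose fibers all map to minimal rational curves through $x$ mutually tangent there; feeding the section over $x$ together with this common tangency into a bend-and-break argument again produces a degenerate member of the family, splitting off a rational curve through $x$ of smaller $H$-degree and contradicting minimality. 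Hence $\tau_x$ is finite.

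For the second part, since $\tau_x$ is finite, $H_x$ is a smooth (hence normal) projective variety, and $\mathcal{C}_x$ is by definition the closure of the image of $\tau_x$, it suffices to show $\tau_x$ is generically one-to-one; birationality then forces $\tau_x \colon H_x \to \mathcal{C}_x$ to be \emph{the} normalization morphism. Suppose $\tau_x$ had degree $\geq 2$. Then a general point of $\mathcal{C}_x$ would be the tangent direction at $x$ of two distinct curves $\ell_1, \ell_2 \in H_x$, so that over an open subset of $\mathcal{C}_x$ one gets a family of pairs of distinct minimal rational curves through $x$ agreeing to first order at $x$. I would analyze the reducible curves $\ell_1 \cup \ell_2$, which meet at $x$ with multiplicity $\geq 2$: a deformation-theoretic argument shows these can be smoothed, and combining this with the second-order behaviour of the family --- essentially the prolongation/bracket analysis of the distribution spanned by $\mathcal{C}_x$ due to Hwang--Mok --- forces either a member of $H_x$ to be singular at $x$, contradicting the smoothness from the first part, or else a further contradiction with minimality of $H$. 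Therefore $\tau_x$ is birational, and the last sentence of the theorem follows since a finite birational morphism from a normal variety onto $\mathcal{C}_x$ is its normalization.

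The main obstacle is the second part: the birationality of $\tau_x$ is the genuinely deep input, and its proof is substantially more delicate than the bend-and-break arguments behind the first part, since it requires controlling second-order contact of minimal rational curves and the infinitesimal geometry of the variety of minimal rational tangents rather than a single degeneration. Within the first part, the subtler point is that curves through a general point are not merely immersed but actually smooth at that point, which is precisely what makes $\tau_x$ a morphism defined on all of $H_x$.
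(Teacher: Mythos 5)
First, note that the paper offers no proof of this statement: Theorem~\ref{T:CxNormal} is quoted verbatim from the literature, part (1) being Kebekus's theorem \cite{MR1874114} and part (2) the Hwang--Mok birationality theorem \cite{MR2128297}, and both are used as black boxes in Section~\ref{S:meat}. Your proposal is therefore an attempt to reprove two substantial external results rather than to reproduce an argument appearing in this paper.

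Judged on its own terms, your outline of part (1) follows Kebekus's strategy correctly at the level of headlines: the delicate point is indeed that curves of a minimal family through a general point $x$ are smooth (not merely immersed) at $x$ itself, so that $\tau_x$ is everywhere defined on $H_x$, and finiteness then follows because a curve in $H_x$ contracted by $\tau_x$ would yield a one-parameter family of minimal curves through $x$ mutually tangent there, which breaks (e.g.\ after lifting to the blow-up of $X$ at $x$, where the whole family passes through a fixed point of the exceptional divisor) and contradicts properness of $H_x$. Part (2), however, contains a genuine gap: the sentence asserting that a smoothing of $\ell_1 \cup \ell_2$ ``combined with the prolongation/bracket analysis \dots forces either a member of $H_x$ to be singular at $x$ \dots or else a further contradiction with minimality'' names a conclusion without supplying any mechanism. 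The birationality of $\tau_x$ is the entire content of the Hwang--Mok paper; the hard step is precisely the passage from ``two distinct members of $H_x$ agreeing to first order at $x$ for a general tangent direction'' to a configuration that violates bend-and-break or a deformation-theoretic dimension bound, and that passage is left blank in your sketch. Also be aware that a smoothing of $\ell_1\cup\ell_2$ generally leaves the family $H$ altogether (it has twice the degree), so it cannot directly contradict minimality of $H$; the contradiction must be extracted more carefully. If you intend to use this theorem, cite it as the paper does; if you intend to prove it, part (2) requires the actual Hwang--Mok construction rather than a pointer to it.
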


The variety $\mathcal{C}_x$ has a natural embedding into $\mathbb{P}(T_xX)$, and this embedding yields important geometric information about $X$. For example, Araujo shows that when $\mathcal{C}_x$ is a linear subspace of $\mathbb{P}(T_xX)$, the $H$-rationally connected quotient of $X$ is a projective space bundle:

\begin{theorem}\label{T:AraMain}
\cite[1.1]{MR2232023} Assume that $\mathcal{C}_x$ is a $d$-dimensional linear subspace of $\mathbb{P}(T_xX)$ for a general point $x \in X$. Then there is a dense open subset $X^\circ$ of $X$ and $\mathbb{P}^{d+1}$-bundle $\varphi^\circ: X^\circ \longrightarrow T^\circ$ such that any curve from $H$ meeting $X^\circ$ is a line on a fiber of $\varphi^\circ$.
\end{theorem}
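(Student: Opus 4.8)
The plan is to produce, on a dense open subset $X^\circ \subseteq X$, a rank-$(d+1)$ regular foliation whose leaves are (the open parts of) the fibres of the asserted bundle. First I would build the distribution. Since $\mathcal{C}_x$ is a $d$-dimensional linear subspace of $\mathbb{P}(T_xX)$ for general $x$, the affine cone over it spans a $(d+1)$-dimensional subspace $\mathcal{D}_x \subseteq T_xX$ with $\mathbb{P}(\mathcal{D}_x) = \mathcal{C}_x$, and over the locus of general points these fit together into a rank-$(d+1)$ subbundle $\mathcal{D} \subseteq T_{X^\circ}$; equivalently $\mathcal{D}$ is the saturated subsheaf of $T_{X^\circ}$ generated by the positive parts $(f^\ast T_X)^+$ as $[f]$ ranges over $H$. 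The key local fact is that for a general $[f] \in H$ with normalization $f\colon \mathbb{P}^1 \to X$ one has $f^\ast \mathcal{D} = (f^\ast T_X)^+ \cong \mathcal{O}_{\mathbb{P}^1}(2) \oplus \mathcal{O}_{\mathbb{P}^1}(1)^{\oplus d}$: the inclusion $(f^\ast T_X)^+ \subseteq f^\ast \mathcal{D}$ holds by construction, both sheaves have rank $d+1$, and $f^\ast T_X / (f^\ast T_X)^+ \cong \mathcal{O}_{\mathbb{P}^1}^{\oplus(n-d-1)}$ is torsion-free, forcing equality. In particular $T_{\mathbb{P}^1} = \mathcal{O}_{\mathbb{P}^1}(2) \hookrightarrow f^\ast \mathcal{D}$, so every curve of $H$ meeting $X^\circ$ is tangent to $\mathcal{D}$.

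Next I would prove that $\mathcal{D}$ is integrable. The Lie bracket induces an $\mathcal{O}_{X^\circ}$-linear map $\theta\colon \wedge^2 \mathcal{D} \to T_{X^\circ}/\mathcal{D}$, viewed as a global section of $(\wedge^2 \mathcal{D})^\vee \otimes (T_{X^\circ}/\mathcal{D})$, and $\mathcal{D}$ is a foliation precisely when $\theta = 0$. Restricting to a general curve $\ell = f(\mathbb{P}^1)$ and using the splitting above, $\wedge^2 f^\ast \mathcal{D} \cong \mathcal{O}_{\mathbb{P}^1}(3)^{\oplus d} \oplus \mathcal{O}_{\mathbb{P}^1}(2)^{\oplus\binom{d}{2}}$ while $f^\ast(T_{X^\circ}/\mathcal{D}) \cong \mathcal{O}_{\mathbb{P}^1}^{\oplus(n-d-1)}$, so $(\wedge^2 f^\ast \mathcal{D})^\vee \otimes f^\ast(T_{X^\circ}/\mathcal{D})$ is a direct sum of copies of $\mathcal{O}_{\mathbb{P}^1}(-2)$ and $\mathcal{O}_{\mathbb{P}^1}(-3)$, which has no nonzero global sections. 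Hence $\theta|_\ell = 0$ for every general $[f] \in H$; since such curves sweep out a dense subset of $X^\circ$ and the vanishing locus of $\theta$ is closed, $\theta$ vanishes identically and $\mathcal{D}$ is a foliation.

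Finally I would identify the leaves and globalize. Since $\mathcal{D}$ is a subbundle closed under bracket, after shrinking $X^\circ$ there is a smooth proper morphism $\varphi^\circ\colon X^\circ \to T^\circ$ onto a smooth variety whose fibres are the leaves of $\mathcal{D}$; each curve of $H$ meeting $X^\circ$, being tangent to $\mathcal{D}$, lies in a single fibre. Fix a general fibre $F = (\varphi^\circ)^{-1}(t)$: it is smooth of dimension $d+1$, it is covered by the curves of $H$ it contains, and at a general point $x \in F$ the variety of minimal rational tangents of $F$ is $\mathbb{P}(\mathcal{D}_x) = \mathbb{P}(T_xF)$, the full projectivized tangent space. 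On a suitable smooth projective compactification $\bar{F}$ of $F$ to which this family extends, adjunction together with the triviality of $f^\ast(T_X/\mathcal{D})$ along the curves gives $-K_{\bar{F}} \cdot \ell = -K_X \cdot \ell = d+2 = \dim \bar{F} + 1$, so the Cho--Miyaoka--Shepherd-Barron characterization of projective space yields $\bar{F} \simeq \mathbb{P}^{d+1}$, with the curves of $H$ becoming lines. Replacing $X^\circ$ by the union of these projective spaces over a dense open of $T^\circ$ then gives the desired $\mathbb{P}^{d+1}$-bundle $\varphi^\circ\colon X^\circ \to T^\circ$, and each curve of $H$ meeting $X^\circ$ is a line on a fibre.

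I expect the principal obstacle to be the last step: converting the abstract foliation into a genuine $\mathbb{P}^{d+1}$-bundle. Leaves of algebraic foliations need neither have good closures nor vary in a flat projective family, so one must choose $X^\circ$ with care, check that the leaf space is a variety and that the leaf map is proper with equidimensional fibres, and simultaneously produce the smooth compactifications $\bar{F}$ to which the projective-space characterization applies --- reconciling all of this with the ambient smoothness of $X$ is the delicate point. The integrability step, by contrast, is essentially forced: the positivity of $(f^\ast T_X)^+$ along the minimal rational curves leaves no room for the Frobenius obstruction.
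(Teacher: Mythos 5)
This statement is quoted from Araujo \cite[1.1]{MR2232023}; the paper gives no proof of it, so the only meaningful comparison is with Araujo's original argument, and your sketch does in fact reconstruct that argument's skeleton: the distribution $\mathcal{D}$ spanned by the affine cones over $\mathcal{C}_x$, the identification $f^\ast\mathcal{D}=(f^\ast T_X)^+$ via Lemma~\ref{L:Hwa}, integrability by restricting the Frobenius tensor to minimal curves and observing that $(\wedge^2 f^\ast\mathcal{D})^\vee\otimes f^\ast(T_X/\mathcal{D})$ is a sum of $\mathcal{O}_{\mathbb{P}^1}(-2)$'s and $\mathcal{O}_{\mathbb{P}^1}(-3)$'s, and the Cho--Miyaoka--Shepherd-Barron characterization applied to the leaves. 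Those three steps are correct as written.

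The genuine gap is exactly the one you flag and then wave through: from ``$\mathcal{D}$ is an integrable subbundle'' you assert that, after shrinking, there is a smooth \emph{proper} morphism $\varphi^\circ$ whose fibres are the leaves. Frobenius only gives you analytic leaves; an algebraic foliation need not have algebraic leaves at all, and even when it does there is no a priori proper leaf-space morphism, so this step cannot be taken for granted --- it is the heart of the theorem, not a technicality. The missing idea that closes it (and the one Araujo uses) is to bring in the $H$-rationally connected quotient $\pi^\circ\colon X^\circ\to Y^\circ$ of \cite[IV.4.16]{MR1440180}, which \emph{already exists} as a proper surjective morphism on a dense open subset with $H$-equivalence classes as fibres, and then to prove that its general fibre coincides with the closure of a leaf of $\mathcal{D}$: the fibre is contained in a leaf because every curve of $H$ is tangent to $\mathcal{D}$ and chains of such curves stay inside a single leaf, while a dimension count (the fibre contains the $(d+1)$-dimensional locus swept out from a point by curves of $H$, and the leaf has dimension exactly $d+1$) forces equality. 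With properness supplied by $\pi^\circ$ rather than by the foliation, your adjunction computation $-K_{\bar F}\cdot\ell=d+2=\dim\bar F+1$ and the CMSB characterization then legitimately identify the general fibre with $\mathbb{P}^{d+1}$ and the curves of $H$ with lines, and flatness of the resulting family of projective spaces over a dense open of $Y^\circ$ gives the bundle structure. Without this substitution your third paragraph is an assertion of the conclusion rather than a proof of it.
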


Lastly, note that the tangent space of $\mathcal{C}_x$ at a point $\tau_x([f])$ is related to the splitting type of $f^\ast T_X$ in an important way. In particular, the tangent space of $\mathcal{C}_x$ at the point $\tau_x([f])$ is cut out by the positive directions of $f^\ast T_X$ at $x \in X$:

\begin{lemma}\label{L:Hwa}
\cite[2.3]{MR1919462} \textit{Let $[f] \in H$ be a general member, and let $T_xX^+_f \subseteq T_xX$ be the $(d + 1)$-dimensional subspace corresponding to the positive factors of the splitting $f^\ast T_X \simeq$ \splitTX . Then $\mathbb{P}(T_xX^+_f)$ is the projectivized tangent space of $\mathcal{C}_x$ at the point $\tau_x([f])$}. 
\end{lemma}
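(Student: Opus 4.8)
The plan is to compute the differential of the tangent map $\tau_x\colon H_x\dashrightarrow\mathcal{C}_x$ at the general point $[f]$ and to read off its image from the splitting type of $f^\ast T_X$. By Theorem~\ref{T:CxNormal} the tangent map is finite and birational, so over a dense open subset it is an isomorphism; in particular, for $x$ general and $[f]\in H_x$ general, $\tau_x$ is an immersion at $[f]$, and $d\tau_x$ identifies $T_{[f]}H_x$ with the tangent space of $\mathcal{C}_x$ at $\tau_x([f])$ embedded in $T_{\tau_x([f])}\mathbb{P}(T_xX)$. Thus it suffices to describe $T_{[f]}H_x$ together with this differential in terms of sections of $f^\ast T_X$. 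Write $0\in\mathbb{P}^1$ for the point with $f(0)=x$; for $x$ general $f$ is an immersion at $0$, so $\tau_x([f])=[df_0(T_0\mathbb{P}^1)]$ is a well-defined point of $\mathbb{P}(T_xX)$.

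Deformations of the \emph{pointed} map $(f,0)$ are governed by sections $v\in H^0(\mathbb{P}^1,f^\ast T_X)$ with $v(0)=0$, i.e.\ by $H^0(\mathbb{P}^1,f^\ast T_X(-1))$ via the canonical identification $\mathcal{O}_{\mathbb{P}^1}(-1)\cong\mathcal{I}_0$; reparametrizations of $\mathbb{P}^1$ fixing $0$ account for a $2$-dimensional subspace, and the quotient, of dimension $(d+2)-2=d$, is $T_{[f]}H_x$, as expected. Given such a $v$, pick a $1$-parameter deformation $f_t$ with $\dot f_0=v$ and $f_t(0)=x$ for all $t$; differentiating the tangent direction $df_t|_0(\partial_s)$ in $t$ at $t=0$ gives, after interchanging the order of differentiation $\partial_t\partial_s f_t=\partial_s\partial_t f_t$, the vector $v'(0)\in T_xX$ (well-defined since $v(0)=0$), taken modulo the radial direction $df_0(\partial_s)$. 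Equivalently, $v\mapsto v'(0)$ is the evaluation map $\mathrm{ev}_0\colon H^0(\mathbb{P}^1,f^\ast T_X(-1))\to(f^\ast T_X(-1))_0\cong T_xX$, and the affine cone over the embedded tangent space of $\mathcal{C}_x$ at $\tau_x([f])$ equals $\mathrm{im}(\mathrm{ev}_0)$ --- the radial direction itself lying in this image, as it is realized by the reparametrization direction $s\,\partial_s$.

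Finally one computes both sides from the splitting $f^\ast T_X\simeq\splitTX$. Twisting by $\mathcal{O}_{\mathbb{P}^1}(-1)$ gives $f^\ast T_X(-1)\simeq\mathcal{O}_{\mathbb{P}^1}(1)\oplus\mathcal{O}_{\mathbb{P}^1}^{\oplus d}\oplus\mathcal{O}_{\mathbb{P}^1}(-1)^{\oplus(n-d-1)}$; evaluation at $0$ surjects onto the fibres of exactly the nonnegative summands, so $\mathrm{im}(\mathrm{ev}_0)$ is the fibre at $0$ of $\mathcal{O}_{\mathbb{P}^1}(2)\oplus\mathcal{O}_{\mathbb{P}^1}(1)^{\oplus d}$, which is precisely $(f^\ast T_X)^+_0=T_xX^+_f$ --- the same description as that defining $(f^\ast T_X)^+$ in the text. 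Hence $T_xX^+_f=\mathrm{im}(\mathrm{ev}_0)$ is the affine cone over the tangent space of $\mathcal{C}_x$ at $\tau_x([f])$, i.e.\ $\mathbb{P}(T_xX^+_f)$ is that projectivized tangent space, as claimed.

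\textbf{Main obstacle.} The essential input is that $\tau_x$ is an immersion at the general point, so that the image of $d\tau_x$ genuinely computes the tangent space of $\mathcal{C}_x$; this is exactly where Theorem~\ref{T:CxNormal} (Kebekus, Hwang--Mok) is indispensable, since without it one only obtains that $\mathbb{P}(T_xX^+_f)$ contains the reduced tangent cone of $\mathcal{C}_x$ at $\tau_x([f])$. The remaining work is careful bookkeeping: the canonical twist $\mathcal{O}(-1)\cong\mathcal{I}_0$ relating pointed deformations to $f^\ast T_X(-1)$, the interchange-of-derivatives identity producing $v'(0)$, and the check that reparametrizations contribute only the radial direction (so passing to $T_{[f]}H_x$ drops exactly the vertex of the cone and nothing else). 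All of this relies on the genericity of $x$ and $[f]$: smoothness of $H_x$ of the expected dimension $d$, the generic splitting type of $f^\ast T_X$, and $f$ being an immersion at $0$.
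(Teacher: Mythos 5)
The paper gives no proof of this lemma --- it is quoted directly from Hwang \cite[2.3]{MR1919462} --- so there is nothing internal to compare against; your argument is essentially the standard proof from that source, identifying $T_{[f]}H_x$ with $H^0(\mathbb{P}^1,f^\ast T_X(-1))$ modulo reparametrizations and computing $d\tau_x$ as the evaluation map at $0$, whose image is exactly the fibre of the positive part. The bookkeeping is right (the kernel of the composite $H^0(f^\ast T_X(-1))\to T_xX/\langle df_0(\partial_s)\rangle$ is $2$-dimensional and coincides with the reparametrization subspace, so the induced map on the $d$-dimensional quotient is injective onto the $d$-dimensional tangent space of $\mathcal{C}_x$ at a smooth point). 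One small remark: invoking the Hwang--Mok birationality of $\tau_x$ is more than is needed --- in characteristic zero, generic smoothness of the dominant finite map $\tau_x\colon H_x\to\mathcal{C}_x$, or simply your dimension count together with the inclusion $\mathrm{im}(d\tau_x)\subseteq T_{\tau_x([f])}\mathcal{C}_x$ at a general (hence smooth) point of $\mathcal{C}_x$, already forces equality --- but this does not affect the correctness of the argument.
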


\section{Preliminary Results}\label{S:Prelims}

Before proving the main theorem, I prove a few auxillary results. In particular, I will show that with the assumptions made in the statement of Theorem \ref{T:main},  $X$ admits a nice cover of rational curves, and one can determine the splitting type of the ample vector bundle $\mathcal{E}$ when restricted to these rational curves. 

\begin{lemma}\label{L:Xuniruled}
Let $X$ be a smooth complex projective variety, $\mathcal{E}$ an ample vector bundle of rank $r$ on $X$, and assume that $\wedge^p \mathcal{E} \subseteq \wedge^p T_X$ for some positive integer $p \leq r$. Then $X$ is uniruled.
\end{lemma}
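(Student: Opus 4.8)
The plan is to argue by contradiction, the one substantive input being Miyaoka's theorem on generic semipositivity of the cotangent sheaf. Assume that $X$ is not uniruled. Then there is an ample divisor $A$ on $X$ such that, for general $H_1,\dots,H_{n-1}\in|mA|$ with $m\gg 0$, the complete intersection $C:=H_1\cap\cdots\cap H_{n-1}$ is a smooth irreducible curve on which the restriction $\Omega_X|_C$ is a \emph{nef} vector bundle \cite{MR0946243}. Everything after this is a degree computation on the curve $C$.

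First I would restrict the hypothesis to such a curve. Since $\wedge^p\mathcal{E}\subseteq\wedge^p T_X$ is a nonzero global section of $\mathcal{H}om(\wedge^p\mathcal{E},\wedge^p T_X)$, and a general complete intersection curve is not contained in the zero locus of a fixed nonzero section of a vector bundle, the restricted map $\varphi_C\colon \wedge^p(\mathcal{E}|_C)\to\wedge^p(T_X|_C)$ is again nonzero for general $C$; let $\mathcal{I}\subseteq\wedge^p(T_X|_C)$ be its image. As $\mathcal{E}$ is ample, so is $\mathcal{E}|_C$, and hence --- since we work in characteristic zero --- $\wedge^p(\mathcal{E}|_C)$ is an ample vector bundle by \cite[5.3]{MR0193092}. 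A nonzero quotient sheaf of an ample vector bundle on a smooth projective curve has positive degree (pass to its torsion-free quotient, which is again ample), so $\deg\mathcal{I}>0$.

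On the other hand, because $\Omega_X|_C$ is nef, so is $\wedge^p(\Omega_X|_C)\simeq\bigl(\wedge^p(T_X|_C)\bigr)^\vee$; saturating $\mathcal{I}$ inside $\wedge^p(T_X|_C)$ then forces $\deg\mathcal{I}\le 0$, a contradiction. Hence $X$ is uniruled. I expect the only place requiring care to be exactly this degree bookkeeping (a nonzero quotient of an ample bundle on a curve has positive degree; a subsheaf of a bundle with nef dual has non-positive degree), together with the precise invocation of Miyaoka's theorem; equivalently one may substitute the Boucksom--Demailly--P\u{a}un--Peternell criterion that $X$ is uniruled if and only if $K_X$ is not pseudoeffective. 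An essentially equivalent variant is to take determinants first: $\mathcal{A}:=\det(\wedge^p\mathcal{E})$ is an ample line bundle with $\mathcal{A}\hookrightarrow\wedge^{\binom{r}{p}}(\wedge^p T_X)$, which in characteristic zero is a direct summand of $T_X^{\otimes m}$ with $m=p\binom{r}{p}$, so $T_X^{\otimes m}\otimes\mathcal{A}^{-1}$ has a nonzero section and the same argument on $C$ closes the proof.
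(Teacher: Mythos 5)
Your argument is correct and is essentially the proof in the paper: both reduce to Miyaoka's generic semipositivity theorem and a degree count on a general complete intersection curve, using that $\wedge^p$ of an ample bundle is again ample in characteristic zero. The only differences are cosmetic --- you argue by contradiction and bound the degree of the image $\mathcal{I}$ directly, while the paper states the contrapositive (showing $\wedge^p\Omega_X$ is not generically semipositive) and dualizes the inclusion to produce a negative-degree quotient of $\wedge^p\Omega_X|_C$.
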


\begin{proof}
Uniruledness of $X$ follows almost immediately from a theorem of Miyaoka, that says that if $\Omega_X$ is not generically semipositive, then $X$ is uniruled \cite[8.6]{MR927960}. Since generic semipositivity of $\Omega_X$ implies generic semipositivity of $\wedge^p \Omega_X$, it is enough to check that $\wedge^p \Omega_X$ is not generically semipositive: Let $C$ be a general complete intersection curve on $X$. Then $(\wedge^p \mathcal{E})|_{C}$ has positive degree since $\wedge^p \mathcal{E}$ is ample. The dual of the inclusion $(\wedge^p \mathcal{E})|_C \hookrightarrow (\wedge^p T_X)|_C$ yields the desired result. 
\end{proof}

Now let $H \subset \mbox{RatCurves}^n(X)$ be a minimal dominating family of rational curves on $X$ guaranteed by Lemma \ref{L:Xuniruled}. The next lemma determines the splitting type of $f^\ast \mathcal{E}$ for $[f] \in H$. 

\begin{lemma}\label{L:splitTX}
Let $X$ be a smooth complex projective variety, $\mathcal{E}$ an ample vector bundle of rank $r$ on $X$, and $p \leq r$ a positive integer such that $\wedge^p \mathcal{E} \subseteq \wedge^p T_X$. Let $H$ be a minimal dominating family of rational curves on $X$. Then either $f^\ast \mathcal{E} \simeq \mathcal{O}_{\mathbb{P}^1}(2) \oplus \mathcal{O}_{\mathbb{P}^1}(1)^{\oplus r-1}$ for every $[f] \in H$, or $f^\ast \mathcal{E} \simeq \mathcal{O}_{\mathbb{P}^1}(1)^{\oplus r}$ for every $[f] \in H$.
\end{lemma}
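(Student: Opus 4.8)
The plan is to pull back the containment $\wedge^p\mathcal{E}\subseteq\wedge^p T_X$ along a general member of $H$, read off the possible splitting types of $f^\ast\mathcal{E}$ from the known splitting type of $f^\ast T_X$, and then propagate the answer to every member of $H$ by using that $\deg f^\ast\mathcal{E}$ is constant on the irreducible family $H$.

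First I would collect the elementary input. Since $\mathcal{E}$ is ample, $f^\ast\mathcal{E}$ is an ample bundle on $\mathbb{P}^1$ for every $[f]\in H$, hence $f^\ast\mathcal{E}\simeq\bigoplus_{i=1}^r\mathcal{O}_{\mathbb{P}^1}(a_i)$ with every $a_i\geq 1$; put $e:=\sum_i a_i=\deg f^\ast\mathcal{E}$. This number equals $\mathcal{E}\cdot C$ for the image curve $C$, so it depends only on the numerical class of $C$ and is therefore the same for all $[f]\in H$. For a general $[f]\in H$ one also has $f^\ast T_X\simeq\splitTX$ with $d\geq 0$, and I record the combinatorial fact that every line-bundle summand $\mathcal{O}_{\mathbb{P}^1}(c)$ of $\wedge^p f^\ast T_X$ satisfies $c\leq p+1$: indeed $c$ is a sum of $p$ of the integers in the multiset $\{2,1,\dots,1,0,\dots,0\}$ (with $d$ ones), and the largest such sum is $2+(p-1)$.

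Next I would pull back the inclusion. With $\mathcal{Q}:=\wedge^p T_X/\wedge^p\mathcal{E}$, the sheaf $\mathcal{Q}$ is locally free outside a proper closed subset $Z\subsetneq X$, so for a general $[f]\in H$, whose image is not contained in $Z$, the pulled-back map $\wedge^p f^\ast\mathcal{E}=f^\ast\wedge^p\mathcal{E}\to f^\ast\wedge^p T_X=\wedge^p f^\ast T_X$ is injective at the generic point of $\mathbb{P}^1$. Let $M:=a_{i_1}+\dots+a_{i_p}$ be the sum of the $p$ largest of the $a_i$, so that $\mathcal{O}_{\mathbb{P}^1}(M)$ is a summand of $\wedge^p f^\ast\mathcal{E}$ of maximal degree; generic injectivity forces the composite $\mathcal{O}_{\mathbb{P}^1}(M)\to\wedge^p f^\ast T_X$ to be nonzero, whence $M\leq p+1$ by the previous paragraph. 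Combined with $a_i\geq 1$ this gives $p\leq M\leq p+1$ and $\max_i a_i\leq M-(p-1)\leq 2$. If $M=p$, every $a_i$ equals $1$ and $e=r$. If $M=p+1$ and $p\geq 2$, then among the $p$ largest $a_i$ (all lying in $\{1,2\}$ and summing to $p+1$) exactly one equals $2$, while the remaining $a_i$ are $\leq 1$, hence all equal $1$; so $e=r+1$. The borderline case $p=1$, $M=2$ takes one extra line: twisting the injection $f^\ast\mathcal{E}\hookrightarrow f^\ast T_X$ by $\mathcal{O}_{\mathbb{P}^1}(-2)$ and applying $H^0$ gives $\#\{i:a_i=2\}=h^0(f^\ast\mathcal{E}(-2))\leq h^0(f^\ast T_X(-2))=1$, so again $e=r+1$ with exactly one $a_i=2$.

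Finally I would transfer this to all of $H$. Since $e\in\{r,r+1\}$ and $e$ is independent of $[f]\in H$, for an arbitrary $[f]\in H$ the relation $f^\ast\mathcal{E}\simeq\bigoplus_{i=1}^r\mathcal{O}_{\mathbb{P}^1}(a_i)$ with $\sum_i a_i=e$, with $r$ summands, and with every $a_i\geq 1$ forces $f^\ast\mathcal{E}\simeq\mathcal{O}_{\mathbb{P}^1}(1)^{\oplus r}$ when $e=r$ and $f^\ast\mathcal{E}\simeq\mathcal{O}_{\mathbb{P}^1}(2)\oplus\mathcal{O}_{\mathbb{P}^1}(1)^{\oplus r-1}$ when $e=r+1$. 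The step I expect to need the most care is the pullback: one must be sure that the torsion of $\mathcal{Q}$ does not destroy injectivity, which is precisely why the generality of $[f]\in H$ (its image curve avoiding the torsion locus of $\mathcal{Q}$) is used; the remaining arguments are routine bookkeeping with splitting types on $\mathbb{P}^1$.
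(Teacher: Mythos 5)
Your proof is correct and follows essentially the same route as the paper: pull back $\wedge^p\mathcal{E}\subseteq\wedge^p T_X$ to a general member of $H$, bound every summand of $\wedge^p f^\ast\mathcal{E}$ by $p+1$ using the known splitting of $f^\ast T_X$, and propagate to all of $H$ via the constancy of $\deg f^\ast\det\mathcal{E}$ on the irreducible family together with ampleness. If anything you are more careful than the paper at two points it glosses over --- the generic injectivity of the pulled-back inclusion (avoiding the torsion locus of the quotient) and the borderline case $p=1$, where the bound $\beta_j\le p+1$ alone does not exclude two summands of degree $2$ and your $h^0(f^\ast T_X(-2))=1$ argument is genuinely needed.
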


\begin{proof}
First let $[f] \in H$ be a general member of $H$. Since $\mathcal{E}$ is ample and $[f]$ parameterizes a rational curve, $f^\ast\mathcal{E}$ splits as a direct sum of positive degree line bundles:
\[
f^\ast \mathcal{E} \simeq \displaystyle \bigoplus_{i = 1}^{r} \mathcal{O}_{\mathbb{P}^1}(\alpha_i),\hspace{5mm} \alpha_i \geq 1.
\]
It follows that $f^\ast(\wedge^p\mathcal{E})$ splits as a sum of line bundles of degree at least $p$: 
\[
f^\ast(\wedge^p\mathcal{E}) \simeq \displaystyle \bigoplus_{j = 1}^{\binom{r}{p}} \mathcal{O}_{\mathbb{P}^1}(\beta_j),\hspace{5mm} \beta_j =  \alpha_{j_1} + \alpha_{j_2} + \dots + \alpha_{j_p} \geq p.
\]
By assumption, 
\begin{multline*}
f^\ast(\wedge^p\mathcal{E}) \subseteq f^\ast(\wedge^p T_X) \simeq \wedge^p(\splitTX) \\ \simeq \mathcal{O}_{\mathbb{P}^1}(p+1)^{\oplus q_1} \oplus \mathcal{O}_{\mathbb{P}^1}(p)^{\oplus q_2}\oplus\dots
\end{multline*}
and the highest degree line bundle occuring on the right is $\mathcal{O}_{\mathbb{P}^1}(p+1)$. Therefore $p \leq \beta_j \leq p+1$ for each $1 \leq j \leq \binom{r}{p}$, but this leaves only two possibilities for $f^\ast\mathcal{E}$: Either $f^\ast \mathcal{E} \simeq \mathcal{O}_{\mathbb{P}^1}(2) \oplus \mathcal{O}_{\mathbb{P}^1}(1)^{\oplus r-1}$ or $f^\ast \mathcal{E} \simeq \mathcal{O}_{\mathbb{P}^1}(1)^{\oplus r}$. 

Lastly, observe that $\mathcal{E}$ must split the same way on every rational curve parameterized by $H$: Since $H$ is an irreducible component of $\mbox{RatCurves}^n(X)$, the intersection number of a fixed line bundle on $X$ and any curve $C$ parameterized by $H$ is independent of $C$. In particular, the degree of $\mbox{det}(\mathcal{E})$ remains constant on all the rational curves parameterized by $H$, and it follows that $\mbox{deg}(f^\ast \mathcal{E}) = r$ for every $[f] \in H$ or $\mbox{deg}(f^\ast \mathcal{E}) = r+1$ for every $[f] \in H$. That $\mathcal{E}$ splits in one of the above two ways on every (i.e., not just general) $[f] \in H$ is forced by the fact that $f^\ast\mathcal{E}$ is ample and its rank and degree differ by at most 1. 
\end{proof}

\begin{corollary}\label{C:unsplit}
Let $X$ and $\mathcal{E}$ be as above. Unless $r = 1$ and $f^\ast \mathcal{E} \simeq \mathcal{O}_{\mathbb{P}^1}(2)$, $X$ admits an unsplit minimal dominating family of rational curves.
\end{corollary}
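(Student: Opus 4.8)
The plan is to show that the minimal dominating family $H$ fixed above is itself unsplit as long as we are not in the excluded case $r=1$, $f^{\ast}\mathcal{E}\simeq\mathcal{O}_{\mathbb{P}^{1}}(2)$; since $H$ is already a minimal dominating family, this proves the corollary. The only tool needed beyond what has been established is a short degree estimate exploiting the ampleness of $\mathcal{E}$ on each component of a degenerate curve.

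First I would record the relevant numerical fact. By Lemma~\ref{L:splitTX}, $\det\mathcal{E}\cdot C=\deg(f^{\ast}\mathcal{E})$ takes the same value for all $[f]\in H$, and that value is either $r$ or $r+1$; moreover the value $r+1$ occurs together with $r=1$ exactly when $f^{\ast}\mathcal{E}\simeq\mathcal{O}_{\mathbb{P}^{1}}(2)$. Consequently, outside the excluded case one has $\det\mathcal{E}\cdot C<2r$: this is immediate if $r\ge 2$ (then $r+1\le 2r-1<2r$), while for $r=1$ the only remaining possibility is $f^{\ast}\mathcal{E}\simeq\mathcal{O}_{\mathbb{P}^{1}}(1)$, so $\det\mathcal{E}\cdot C=1<2$. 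So from now on I assume $\det\mathcal{E}\cdot C<2r$.

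Next I would argue by contradiction: suppose $H$ is not proper. Taking a limit of the curves of $H$ in a proper model of the family (the space of stable maps, or the Chow variety), one obtains a boundary member which is a connected $1$-cycle $Z=\sum_{i}a_{i}C_{i}$ with each $C_{i}$ an integral rational curve and $\sum_{i}a_{i}\ge 2$; indeed, a limiting cycle which were a single reduced irreducible rational curve would already lie in $H$, since $H$ is an irreducible component of $\mbox{RatCurves}^{n}(X)$ and hence closed in it. The degree of the line bundle $\det\mathcal{E}$ is locally constant in a connected family of effective $1$-cycles, so $\det\mathcal{E}\cdot Z=\det\mathcal{E}\cdot C<2r$. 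On the other hand, pulling $\mathcal{E}$ back along the normalization $\mathbb{P}^{1}\to C_{i}\hookrightarrow X$ of each component gives an ample vector bundle of rank $r$ on $\mathbb{P}^{1}$; such a bundle splits into line bundles of degree $\ge 1$, so $\det\mathcal{E}\cdot C_{i}=\deg(\mathcal{E}|_{C_{i}})\ge r$ for every $i$. Hence $\det\mathcal{E}\cdot Z=\sum_{i}a_{i}(\det\mathcal{E}\cdot C_{i})\ge r\sum_{i}a_{i}\ge 2r$, contradicting $\det\mathcal{E}\cdot Z<2r$. Therefore $H$ is proper, i.e.\ unsplit, and it is the desired family.

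I expect the only genuinely delicate point to be the structural claim about the degeneration — that a non-proper irreducible family of rational curves acquires, in a proper compactification, only connected cycles whose components are rational and whose total multiplicity is at least two. This is standard (see \cite[Ch.~II]{MR1440180}), and it is precisely here that it matters that $H$ consists of rational curves: on a curve of positive genus an ample bundle of rank $r$ may have degree strictly less than $r$, so the estimate $\det\mathcal{E}\cdot C_{i}\ge r$ would fail. Everything else is bookkeeping with intersection numbers.
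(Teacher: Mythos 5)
Your proof is correct and follows essentially the same route as the paper: both arguments use that ampleness forces $\deg(\mathcal{E}|_{C_i})\ge r$ on each component of a degenerate cycle, while Lemma~\ref{L:splitTX} caps the total degree at $r$ or $r+1$, which is $<2r$ outside the excluded case. You merely spell out more explicitly the standard degeneration step (that a non-proper family produces a connected rational $1$-cycle of total multiplicity at least two) that the paper leaves implicit.
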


\begin{proof}
Let $H$ be a minimal dominating family of rational curves on $X$, $[f] \in H$ a general member. By Lemma \ref{L:splitTX},
\[
r = \mbox{rank}(f^\ast\mathcal{E}) \leq \mbox{deg}(f^\ast\mathcal{E}) = r \mbox{ or } r+1
\]
When $r > 1$ or when $r = 1$ and $f^\ast \mathcal{E} \simeq \mathcal{O}_{\mathbb{P}^1}(1)$, the above inequality shows that it is impossible for the curve parameterized by $[f]$ to split as a sum of two or more rational curves $C_1, C_2, \dots, C_k$: On the one hand $r = \mbox{rank}(f^\ast \mathcal{E}) \leq \mbox{deg}(\mathcal{E}|_{C_i})$ for each $C_i$ by ampleness of $\mathcal{E}$. On the other hand, the sum of the degrees of the $\mathcal{E}|_{C_i}$ must equal $r \mbox{ or } r+1$. Therefore $H$ is unsplit.
\end{proof}

\section{Proof of Theorem \ref{T:main}}\label{S:meat}

Let $X$ be a smooth complex projective variety of dimension $n$, $\mathcal{E}$ an ample vector bundle of rank $r$ on $X$, and $p \leq r$ a positive integer such that $\wedge^p \mathcal{E} \subseteq \wedge^p T_X$. By Theorem \ref{T:Wahl}, one may assume that $r > 1$. Let $H$ be an unsplit minimal dominating family of rational curves on $X$ guaranteed by Corollary \ref{C:unsplit}. Lemma \ref{L:splitTX} shows that there are two possible ways for the vector bundle $\mathcal{E}$ to split on the curves parameterized by $H$; I address each case separately:
\newline
\newline
CASE I: First assume that $f^\ast \mathcal{E} \simeq \mathcal{O}_{\mathbb{P}^1}(1)^{\oplus r}$ for every $[f] \in H$. The following result of Andreatta-Wi\'sniewski deals with this situation:

\begin{theorem}\label{T:AW1.2} \cite[1.2]{MR1859022} Let $X$ be a smooth complex projective variety such that $\rho(X) = 1$, $\mathcal{E}$ a vector bundle of rank $r$ on $X$, and $H$ an unsplit minimal dominating family of rational curves on $X$. If there exists an integer $a$ such that $f^\ast \mathcal{E} \simeq \mathcal{O}_{\mathbb{P}^1}(a)^{\oplus r}$ for every $[f] \in H$, then there is a uniquely defined line bundle $\mathcal{L}$ on $X$ such that $deg(f^\ast \mathcal{L}) = a$ and $\mathcal{E} \simeq \mathcal{L}^{\oplus r}$.
\end{theorem}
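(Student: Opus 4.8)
The plan is to pull $\mathcal{E}$ up to the parameter space $H$ along the universal family, show it becomes a twist of a trivial bundle there, and then descend it back to $X$. Write $U = \mathrm{Univ}_H$ with its two natural morphisms $p\colon U \to H$ (a $\mathbb{P}^1$-bundle, at least \'etale-locally) and $q\colon U \to X$ (the evaluation map); since $H$ is unsplit, $q$ is proper, and since $H$ is dominating, $q$ is surjective.

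First I would analyze $q^{*}\mathcal{E}$. For every $h = [f] \in H$ the restriction of $q^{*}\mathcal{E}$ to the fiber $U_h \cong \mathbb{P}^1$ is $f^{*}\mathcal{E} \cong \mathcal{O}_{\mathbb{P}^1}(a)^{\oplus r}$. Hence, after twisting by a relative degree $-a$ line bundle $\xi$ on $U$, the restriction of $q^{*}\mathcal{E} \otimes \xi$ to every fiber of $p$ is trivial of rank $r$ with vanishing $H^1$. By cohomology and base change, $\mathcal{F} := p_{*}(q^{*}\mathcal{E} \otimes \xi)$ is locally free of rank $r$ on $H$, its formation commutes with base change, and the evaluation morphism $p^{*}\mathcal{F} \otimes \xi^{-1} \to q^{*}\mathcal{E}$ is an isomorphism. (A little care is needed because $U \to H$ is only an \'etale-locally trivial $\mathbb{P}^1$-bundle; this is a routine gluing issue.)

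Next I would descend from $U$ back to $X$. The crucial point is that $q^{*}\mathcal{E}$ is itself pulled back from $X$, hence constant along the fibers of $q$, while the general fiber $q^{-1}(x)$ is (birational to) the variety $H_x$ of curves through $x$. Comparing the two descriptions $q^{*}\mathcal{E}|_{q^{-1}(x)} \cong \mathcal{E}_x \otimes \mathcal{O}_{q^{-1}(x)}$ and $(p^{*}\mathcal{F} \otimes \xi^{-1})|_{q^{-1}(x)}$ forces $\mathcal{F}$ to restrict to a twist of a trivial bundle on every $H_x$; letting the $H_x$ sweep out $H$ one extracts a line bundle $\mathcal{N}$ on $H$ and an isomorphism $\mathcal{F} \cong \mathcal{N}^{\oplus r}$. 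Feeding this back into $q^{*}\mathcal{E} \cong p^{*}\mathcal{F} \otimes \xi^{-1}$ and using $q_{*}\mathcal{O}_U = \mathcal{O}_X$ produces a line bundle $\mathcal{L}$ on $X$ with $q^{*}\mathcal{L} \cong p^{*}\mathcal{N} \otimes \xi^{-1}$ and $\mathcal{E} \cong \mathcal{L}^{\oplus r}$; uniqueness of $\mathcal{L}$ and the identity $\deg(f^{*}\mathcal{L}) = a$ are then immediate. The hypotheses enter exactly here: $\rho(X) = 1$ guarantees, via Proposition~\ref{P:trivMRC}, that $X$ is $H$-rationally connected, so the fibers of $q$ genuinely see all of $X$ and the descended object is globally well defined, while unsplitness keeps $q$ proper so that the pushforwards behave.

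The main obstacle is precisely this descent step: promoting the fiberwise and relative triviality of $q^{*}\mathcal{E}$ to an honest global splitting $\mathcal{E} \cong \mathcal{L}^{\oplus r}$ on $X$. The fibers $H_x$ of $q$ can be positive dimensional and, over a small bad locus, badly behaved, so one has to verify that the splitting transported along different chains of $H$-curves is consistent — that is, that the relevant monodromy is trivial — and it is here that the geometry of the minimal family together with $\rho(X)=1$ is indispensable. Everything else is cohomology-and-base-change bookkeeping.
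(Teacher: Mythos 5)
Your reduction to the universal family is fine as far as it goes: with $H$ unsplit, $q$ is proper and surjective, and cohomology and base change does give $q^{*}\mathcal{E}\simeq p^{*}\mathcal{F}\otimes\xi^{-1}$ once a suitable $\xi$ exists. But the step you describe as ``letting the $H_x$ sweep out $H$ one extracts a line bundle $\mathcal{N}$ and an isomorphism $\mathcal{F}\cong\mathcal{N}^{\oplus r}$'' is not a step: it is the theorem itself, transported from $X$ to $H$. What you actually know is that $\mathcal{F}$ becomes projectively trivial on each subvariety $q^{-1}(x)$ (and only after a further finite descent to $H_x$); a vector bundle that is projectively trivial on every member of a covering family of subvarieties need not be projectively trivial globally, and $H$ is in worse shape than $X$ for attacking this --- it need not have Picard number $1$, need not be rationally connected, and its $H^1(\mathcal{O})$ need not vanish, so none of the hypotheses that made the problem tractable on $X$ are available on $H$. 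You flag this as ``the main obstacle'' and then assert its resolution; that is the gap. The subsequent descent from $U$ to $X$ via $q_{*}\mathcal{O}_U=\mathcal{O}_X$ is also unjustified as stated, since the fibers of $q$ are positive dimensional, not obviously connected (irreducibility of $H_x$ is exactly the kind of thing that can fail, cf.\ Example~\ref{E:CounterEx}), and $q$ is not flat.

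There is also a secondary problem at the very start: $p\colon U\to H$ is only an \'etale-locally trivial $\mathbb{P}^1$-bundle with no preferred section, so the only line bundles of controlled relative degree you have for free are $\omega_{U/H}$ (degree $-2$) and $q^{*}\det\mathcal{E}$ (degree $ra$); a $\xi$ of relative degree $-a$ need not exist when $a$ is odd and $r$ is even. This is a Brauer-type obstruction, not a gluing issue. For contrast, the paper's proof avoids both difficulties by working with $\mathbb{P}(\mathcal{E})\to X$ instead of $U\to H$: it lifts the curves of $H$ to $\mathbb{P}(\mathcal{E})$ (possible because $\mathbb{P}(f^{*}\mathcal{E})\simeq\mathbb{P}^1\times\mathbb{P}^{r-1}$), takes a general fiber $Y$ of the rationally connected quotient of $\mathbb{P}(\mathcal{E})$ with respect to the lifted families, shows $Y\to X$ is finite surjective by a dimension count on $A_1$, peels off the tautological quotient to induct on $r$, and then uses that $Y$ is rationally connected --- hence simply connected with $H^1(Y,\mathcal{O}_Y)=0$ --- to kill the torsion line bundle and split the resulting extension, finally descending through the finite cover via Lemma~\ref{L:AW1.2.2}. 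Rational connectedness of the auxiliary variety is precisely the global input your proposal is missing on $H$.
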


\begin{remark}I was unable to follow all of the argument made in \cite[1.2]{MR1859022}, therefore an alternative proof is provided below. The method of lifting rational curves to $\mathbb{P}(\mathcal{E})$ remains the same as the proof given in \cite{MR1859022}; modifications were made to reflect the fact that a general fiber of a rationally connected quotient may not have Picard number 1. (See \ref{E:CounterEx}-\ref{R:Pic1} for more.) In fact, Theorem \ref{T:sum} is a generalization of the original statement. Since then, M. Andreatta has explained to me a nice fix for the apparent gap in the original proof of \cite[1.2]{MR1859022}.
\end{remark}

\begin{theorem}\label{T:AWfix}Let $X$ be a smooth complex projective variety of dimension $n$, $\mathcal{E}$ a vector bundle of rank $r$ on $X$, and $H_1, H_2, \dots, H_k$ a collection of families of rational curves on $X$ such that $X$ is $(H_1, H_2, \dots, H_k)$-rationally connected. If there exists an integer $a \in \mathbb{Z}$ such that $f^\ast \mathcal{E} \simeq \mathcal{O}_{\mathbb{P}^1}(a)^{\oplus r}$ for every $[f] \in H_1, H_2, \dots, H_k$, then there exists a finite surjective morphism $q: Y \longrightarrow X$ from a variety $Y$ such that:
\begin{enumerate-p}
\item There is a collection of families $V_1, V_2, \dots, V_l \subseteq \RatCurves^n(Y)$ and a proper surjective morphism $q_\ast: \bigcup_{i = 1}^l V_i \longrightarrow \bigcup_{j = 1}^k H_k$ where $q_\ast([\widehat{f}]) = [f]$ is given by $q \circ \widehat{f} = f$. The variety $Y$ is $(V_1, V_2, \dots, V_l)$-rationally connected. \label{condition1}

\item There is a (uniquely defined) line bundle $\mathcal{L}$ on $Y$ such that $\degree(f^\ast \mathcal{L}) = a$ and $q^\ast\mathcal{E} \simeq \mathcal{L}^{\oplus r}$. \label{condition2}
\end{enumerate-p}
\end{theorem}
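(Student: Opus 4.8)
The plan follows the method of \cite{MR1859022}: lift everything to $\mathbb{P}(\mathcal{E})$ and take a rationally connected quotient there, but carry along a finite cover of $X$ to absorb the monodromy that the original argument overlooked. Write $\pi\colon\mathbb{P}(\mathcal{E})\to X$ for the projectivization and $\xi=\mathcal{O}_{\mathbb{P}(\mathcal{E})}(1)$ for the tautological quotient line bundle. The first and most important input is a lifting statement. If $[f]\in H_i$ then $f^{\ast}\mathcal{E}\cong\mathcal{O}_{\mathbb{P}^1}(a)^{\oplus r}$, so $\mathbb{P}(f^{\ast}\mathcal{E})\cong\mathbb{P}^1\times\mathbb{P}^{r-1}$ and under this identification $\xi|_{\mathbb{P}(f^{\ast}\mathcal{E})}$ corresponds to $\mathrm{pr}_1^{\ast}\mathcal{O}_{\mathbb{P}^1}(a)\otimes\mathrm{pr}_2^{\ast}\mathcal{O}_{\mathbb{P}^{r-1}}(1)$; hence a lift of $f$ that is a section of $\mathbb{P}(f^{\ast}\mathcal{E})\to\mathbb{P}^1$ has $\xi$-degree $a$ if and only if it is a constant section. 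Thus each curve $C$ parametrized by $H_i$ admits, through every point of $\mathbb{P}(\mathcal{E})$ lying over $C$, a \emph{unique} lift $\widehat{C}$ with $\xi\cdot\widehat{C}=a$, and these lifts move with $C$. Collecting them produces finitely many families $\widetilde{H}_1,\dots,\widetilde{H}_l\subseteq\RatCurves^n(\mathbb{P}(\mathcal{E}))$ whose union dominates $\mathbb{P}(\mathcal{E})$, each projecting to one of the $H_i$ via $[\widehat{f}]\mapsto[\pi\circ\widehat{f}]$, with the property that the lifts through a fixed general point $\widehat{x}$ are in natural bijection with the $H_i$-curves through $\pi(\widehat{x})$.

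Next form the $(\widetilde{H}_1,\dots,\widetilde{H}_l)$-rationally connected quotient $\psi^{\circ}\colon\mathbb{P}(\mathcal{E})^{\circ}\to Z^{\circ}$. Since a chain of $H_i$-curves in $X$ can be lifted vertex by vertex to a chain of $\widetilde{H}_i$-curves (a lift exists through any point over any curve, by the hypothesis that every member of every $H_i$ has the balanced splitting), a general fibre $S=(\psi^{\circ})^{-1}(z)$ dominates $X$ under $\pi$; being proper, its image is closed, so $\pi(S)=X$. The bijection of lifts above shows moreover that $\dim S$ equals the dimension of the general $(H_1,\dots,H_k)$-equivalence class of $X$, namely $n$, so $\pi|_S\colon S\to X$ is surjective and generically finite. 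I then take $q\colon Y\to X$ to be the normalization of $X$ in a suitable finite extension of $\mathbb{C}(X)$ inside $\mathbb{C}(S)$ — the Galois closure of $\mathbb{C}(S)/\mathbb{C}(X)$, say — which is a finite surjective morphism, and I let $V_1,\dots,V_l\subseteq\RatCurves^n(Y)$ be the families obtained from the $\widetilde{H}_i|_S$ under the birational map $S\dashrightarrow Y$. By construction $q\circ\widehat{f}$ is the curve below it, every $H_i$-curve lifts to $Y$ (its $\widetilde{H}_i$-lift through a point of $S$ over it is again $\widetilde{H}_i$-equivalent to that point, hence lies in $S$), $Y$ is $(V_1,\dots,V_l)$-rationally connected because $S$ is a single equivalence class, and the induced $q_{\ast}\colon\bigcup V_i\to\bigcup H_j$ is proper (limits of lifts are lifts of limits). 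This establishes \ref{condition1}.

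For \ref{condition2} it suffices to prove $\mathbb{P}(q^{\ast}\mathcal{E})\cong Y\times\mathbb{P}^{r-1}$ as $\mathbb{P}^{r-1}$-bundles over $Y$, for then $q^{\ast}\mathcal{E}\cong\mathcal{L}^{\oplus r}$ for a line bundle $\mathcal{L}$, unique by Krull--Schmidt, and restricting to a curve in some $V_i$ and comparing with $f^{\ast}\mathcal{E}\cong\mathcal{O}_{\mathbb{P}^1}(a)^{\oplus r}$ forces $\degree(f^{\ast}\mathcal{L})=a$. To get the triviality, base-change the whole picture along $q$: the families $\widetilde{H}_i$ pull back to lift families on $\mathbb{P}(q^{\ast}\mathcal{E})$, and because $Y$ was chosen to split the monodromy, each rationally connected equivalence class in $\mathbb{P}(q^{\ast}\mathcal{E})$ now becomes a genuine section of $\mathbb{P}(q^{\ast}\mathcal{E})\to Y$ — one through every point of every fibre, by the uniqueness of lifts. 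These sections sweep out an $(r-1)$-dimensional family identifying $\mathbb{P}(q^{\ast}\mathcal{E})^{\circ}$ with $Y^{\circ}\times\mathbb{P}^{r-1}$; one then promotes this generic trivialization to an honest bundle isomorphism using the normality of $Y$ together with the $\mathbb{P}^{r-1}$-bundle structure, which yields $\mathbb{P}(q^{\ast}\mathcal{E})\cong Y\times\mathbb{P}^{r-1}$.

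I expect the main obstacle to be precisely this last step — controlling the finite cover $Y$ and upgrading the fibrewise/generic triviality of $\mathbb{P}(q^{\ast}\mathcal{E})\to Y$ to an isomorphism of bundles — together with the bookkeeping needed to check that all the families and morphisms of \ref{condition1} behave. The reason one cannot simply conclude on $X$ itself is the phenomenon of Example~\ref{E:CounterEx}: the fibres of the rationally connected quotient of $\mathbb{P}(\mathcal{E})$ need not have Picard number $1$, so the lift of a curve through a fixed point of $\mathbb{P}(\mathcal{E})$ can, after transport around a loop in $X$, return to a different point of the same fibre; passing to the cover $Y$ is exactly what kills this, and this is the correction to the original proof of \cite[1.2]{MR1859022} referred to in the remark above.
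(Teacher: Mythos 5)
Your opening moves are the same as the paper's: lift each $H_i$-curve to the unique section of $\mathbb{P}(f^\ast\mathcal{E})\simeq\mathbb{P}^1\times\mathbb{P}^{r-1}$ having $\mathcal{O}_{\mathbb{P}(\mathcal{E})}(1)$-degree $a$ through a prescribed point, and pass to the rationally connected quotient of $\mathbb{P}(\mathcal{E})$ with respect to the lifted families. But two load-bearing steps are asserted rather than proved. The first is generic finiteness of $\pi|_S\colon S\to X$. The bijection between lifts through a fixed point $\widehat{x}$ and curves through $\pi(\widehat{x})$ controls one step of a chain emanating from $\widehat{x}$; it does not bound the fibres of $\mathrm{locus}_m(\widehat{x})\to\mathrm{locus}_m(x)$ for $m\geq 2$, because two different chains from $x$ to $x'$ lift to chains whose endpoints over $x'$ may differ, and as the chain varies in a positive-dimensional family these endpoints can a priori sweep out a positive-dimensional subset of the fibre $\mathbb{P}^{r-1}$. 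So ``$\dim S=n$'' does not follow from the bijection of lifts, and without it $\mathbb{C}(S)/\mathbb{C}(X)$ need not be finite and your $Y$ is undefined. The paper proves exactly this point by a cycle-space argument: a curve contracted by the projection would be rationally equivalent to a line in a fibre of $\mathbb{P}(\mathcal{E})\to X$, and since the classes of the lifted curves lying in $S$ already surject onto $A_1(X)_{\mathbb{Q}}$ while $\dim A_1(\mathbb{P}(\mathcal{E}))_{\mathbb{Q}}=\dim A_1(X)_{\mathbb{Q}}+1$, one gets a contradiction. Some argument of this kind has to be supplied.

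The more serious gap is in the splitting statement. Every $\mathbb{P}^{r-1}$-bundle is Zariski-locally, hence generically, trivial, so a ``generic trivialization'' of $\mathbb{P}(q^\ast\mathcal{E})\to Y$ carries no information; what is needed is a global second projection $\mathbb{P}(q^\ast\mathcal{E})\to\mathbb{P}^{r-1}$, i.e.\ an algebraic family of pairwise disjoint sections covering the total space, and your argument does not produce one. Worse, the cover $Y$ is built to split the monodromy of the single equivalence class $S$, whereas the other classes $S_z$, $z\in Z^\circ$, determine a priori different degree-$\delta$ extensions of $\mathbb{C}(X)$ varying with $z$, and there is no reason the Galois closure of one should split them all; so ``each equivalence class becomes a section'' is unjustified, and you yourself flag the promotion to a bundle isomorphism as the main obstacle --- but that promotion is the heart of the theorem. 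The paper circumvents all of this by induction on $r$: the embedding $Y\hookrightarrow\mathbb{P}(\mathcal{E})$ yields the tautological sequence $0\to\mathcal{E}'\to p_Y^\ast\mathcal{E}\to\mathcal{O}_{\mathbb{P}(\mathcal{E})}(1)|_Y\to 0$ with $\mathcal{E}'$ of rank $r-1$ and again of splitting type $\mathcal{O}_{\mathbb{P}^1}(a)^{\oplus(r-1)}$ on the lifted curves; induction splits $\mathcal{E}'$ after a further finite cover; the quotient line bundle is identified with $\mathcal{L}$ because the two agree on a generating set of curve classes, hence are numerically equivalent, and a torsion line bundle on a simply connected rationally connected variety is trivial; finally the extension splits since $H^1(\mathcal{O})=0$. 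That inductive mechanism (together with Lemma~\ref{L:AW1.2.2} to descend back to $X$) is the idea missing from your proposal.
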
  

\begin{proof}
The argument applies induction with respect to $r$. Let $p: \mathbb{P}(\mathcal{E}) \longrightarrow X$ be the projectivization of $\mathcal{E}$ with relative tautological bundle $\mathcal{O}_{\mathbb{P}(\mathcal{E})}(1)$. For any $[f] \in H_1, H_2, \dots, H_k$ and $y \in p^{-1}(f(0))$ there is a unique lift $\widehat{f}: \mathbb{P}^1 \longrightarrow \mathbb{P}(\mathcal{E})$ with the property that $p \circ \widehat{f} = f$ and $\mbox{deg}(\widehat{f}^\ast\mathcal{O}_{\mathbb{P}(\mathcal{E})}(1)) = a$, $\widehat{f}(0) = y$: Since $\mathbb{P}(f^\ast\mathcal{E}) = \mathbb{P}^1 \times \mathbb{P}^{r-1}$, the morphism $\widehat{f}$ is obtained by composing $\mathbb{P}(f^\ast\mathcal{E}) \longrightarrow \mathbb{P}(\mathcal{E})$ with the morphism $\mathbb{P}^1 \longrightarrow \mathbb{P}^1 \times \{y\} \subset \mathbb{P}^1 \times \mathbb{P}^{r-1}$. Thus, for a generic $f$, $\widehat{f}^\ast T_{\mathbb{P}(\mathcal{E})} = f^\ast T_X  \oplus \mathcal{O}^{\oplus (r-1)}$. 

For each $1 \leq i \leq k$, one may choose an irreducible component $\widehat{H_i} \subset \mbox{RatCurves}^n(\mathbb{P}(\mathcal{E}))$ parameterizing these lifts such that $\widehat{H_i}$ dominates $H_i$. In fact, there exists a natural morphism $p_\ast: \mbox{RatCurves}^n(\mathbb{P}(\mathcal{E})) \longrightarrow \mbox{RatCurves}^n(X)$ defined by $p_\ast(\widehat{f}) = p \circ \widehat{f}$. 

\setcounter{claim}{\value{equation}}
\begin{claim}
For each $1 \leq i \leq k$, the morphism $p_*: \widehat{H_i} \longrightarrow H_i$ is proper and thus surjective. 
\end{claim}
\setcounter{equation}{\value{claim}}

\begin{proof} The proof uses the valuative criterion of properness \cite[II.4.7]{MR0463157}. Let $B$ be the spectrum of a discrete valuation ring (or a germ of a smooth curve in the analytic context) with a closed point $\delta$ and a general point $B^0$. Then for any family of morphisms $F_B : B \times \mathbb{P}^1 \longrightarrow X$ coming from $B \longrightarrow H_i$ one has $\mathbb{P}(F^\ast_{B}\mathcal{E}) = B \times \mathbb{P}^1 \times \mathbb{P}^{r-1}$. Now take $\widehat{F}_{B^{0}}: B^{0} \times \mathbb{P}^1 \longrightarrow \mathbb{P}(\mathcal{E})$, coming from a lift $B^0 \longrightarrow \widehat{H_i}$ of $B \longrightarrow H_i$. By construction $\widehat{F}_{B^{0}}$ is the composition of $\mathbb{P}(F^\ast_{B}\mathcal{E}) \longrightarrow \mathbb{P}(\mathcal{E})$ with the product $id \times \psi_{0}: B^0 \times \mathbb{P}^1 \longrightarrow (B^0 \times \mathbb{P}^1)\times \mathbb{P}^{r-1}$, for some constant morphism $\psi_{0}: B^{0} \longrightarrow \mathbb{P}^{r-1}$. The morphism $\psi_{0}$ extends trivially to $\psi: B \longrightarrow \mathbb{P}^{r-1}$, thus $\widehat{F}_{B^0}$ extends to $\widehat{F}_B$ which is the composition of $\mathbb{P}(F^\ast_{B}\mathcal{E}) \longrightarrow \mathbb{P}(\mathcal{E})$ with the product $id \times \psi$, hence $p_\ast$ is proper.
\end{proof}

Continuing the proof of Theorem \ref{T:AWfix}, consider the $(\widehat{H}_1, \widehat{H}_2, \dots, \widehat{H}_k)$-rationally connected quotient of $\mathbb{P}(\mathcal{E})$, and let $Y \subset \mathbb{P}(\mathcal{E})$ be a general fiber. Notice that $\widehat{H}_1, \widehat{H}_2, \dots, \widehat{H}_k$ restricts to a collection of families $\widehat{H}_{Y_1}, \widehat{H}_{Y_2}, \dots, \widehat{H}_{Y_m} \subseteq \text{RatCurves}^n(Y)$, and $Y$ is $(\widehat{H}_{Y_1}, \widehat{H}_{Y_2}, \dots, \widehat{H}_{Y_m})$-rationally connected by construction. Also note that $Y$ is projective and smooth.

Since $X$ is $(H_1, H_2, \dots, H_k)$-rationally connected and $p_\ast: \widehat{H}_i \longrightarrow H_i$ is surjective for each $1 \leq i \leq k$, the restriction map $p_Y: Y \longrightarrow X$ is surjective.

\setcounter{claim}{\value{equation}}
\begin{claim}
The morphism $p_Y$ has no positive dimensional fiber, hence it is a finite morphism.
\end{claim}
\setcounter{equation}{\value{claim}}

\begin{proof}
By \cite[II.4.4]{MR1440180}, the morphism $p: \mathbb{P}(\mathcal{E}) \longrightarrow X$ induces a surjective map
\begin{equation}
 A_1(\mathbb{P}(\mathcal{E}))_{\mathbb{Q}} \stackrel{p_\ast}{\longrightarrow} A_1(X)_{\mathbb{Q}} \longrightarrow 0. 
\end{equation}
Let $d$ be the dimension of  $A_1(X)_{\mathbb{Q}}$. Then the dimension of $A_1(\mathbb{P}(\mathcal{E}))_{\mathbb{Q}}$ is $d+1$ \cite[II.4.5]{MR1440180}, \cite[Ex. II.7.9]{MR0463157}, and the kernel of $p_\ast$ is the one dimensional space of 1-cycles in $A_1(\mathbb{P}(\mathcal{E}))_\mathbb{Q}$ that are contained in the fibers of $p$. Since the fibers of $p$ are projective spaces, these 1-cycles are each rationally equivalent to a line in a fiber of $p$. Therefore, since $X$ is rationally connected, they must be rationally equivalent in $A_1(\mathbb{P}(\mathcal{E}))_\mathbb{Q}$. If by contradiction there exists a proper curve $C \subset Y$ contracted by $p_Y$, one may take $C$ as a generator for the kernel of $p_\ast$.

Now by \cite[IV.3.13.3]{MR1440180}, $A_1(X)_\mathbb{Q}$ is generated by the classes of curves parameterized by $H_1, H_2, \dots, H_k$, and $A_1(Y)_\mathbb{Q}$ is generated by the classes of curves parameterized by $\widehat{H}_{Y_1}, \widehat{H}_{Y_2}, \dots, \widehat{H}_{Y_m}$. Therefore one may choose lifts of $d$ curves from $H_1, H_2, \dots, H_k$, say $\widehat{C}_1, \widehat{C}_2, \dots, \widehat{C}_d$, such that $\widehat{C}_i \subset Y$ for $1 \leq i \leq d$, and $A_1(\mathbb{P}(\mathcal{E}))_\mathbb{Q}$ is generated by $\widehat{C}_1, \widehat{C}_2, \dots, \widehat{C}_d$ and $C$. But $C \subset Y$ by assumption, so $C$ is a $\mathbb{Q}$-linear combination of $\widehat{C}_1, \widehat{C}_2, \dots, \widehat{C}_d$. This implies that $A_1(\mathbb{P}(\mathcal{E}))_\mathbb{Q}$ can be generated by $d$ elements, a contradiction. Therefore $p_Y$ does not contract any proper curve in $Y$, hence it is a finite morphism as desired.
\end{proof}

Now consider the pullback $\widetilde{p}: \mathbb{P}(p^\ast_Y\mathcal{E}) \longrightarrow Y$ with the induced morphism $\widetilde{p}_Y: \mathbb{P}(p^\ast_Y \mathcal{E}) \longrightarrow \mathbb{P}(\mathcal{E})$ such that $p \circ \widetilde{p}_Y = p_Y \circ \widetilde{p}$. By the universal property of the fiber product the projective bundle $\widetilde{p}$ admits a section $s : Y \longrightarrow \mathbb{P}(p^\ast_Y\mathcal{E})$ such that $\widetilde{p}_Y \circ s$ is the embedding of $Y$ into $\mathbb{P}(\mathcal{E})$. This induces a sequence of bundles over $Y$:
\begin{equation}\label{E:star}
0 \longrightarrow \mathcal{E}^\prime \longrightarrow p^\ast_Y\mathcal{E} \longrightarrow \mathcal{O}_{\mathbb{P}(\mathcal{E})}(1)|_Y \longrightarrow 0 
\end{equation}
where $\mathcal{E}^\prime$ is a bundle of rank $r-1$ on $Y$. In order to apply the inductive hypothesis to $\mathcal{E}^\prime$, it suffices to show that $\mathcal{E}^\prime$ splits in the desired way on the curves parameterized by $\widehat{H}_{Y_1}, \widehat{H}_{Y_2}, \dots, \widehat{H}_{Y_m}$: First notice that $\mbox{deg}(f^\ast\mathcal{O}_{\mathbb{P}(\mathcal{E})}(1)|_{Y}) =a $ for any curve $[f] \in \widehat{H}_{Y_1}, \widehat{H}_{Y_2}, \dots, \widehat{H}_{Y_m}$. (This follows from the fact that $\mbox{deg}(\widehat{f}^\ast \mathcal{O}_{\mathbb{P}(\mathcal{E})}(1)) = a$ for every $[\widehat{f}] \in \widehat{H}_1, \widehat{H}_2, \dots, \widehat{H}_k$ as stated at the beginning of the proof.) Therefore, by restricting \eqref{E:star} to any $[\widehat{f}] \in \widehat{H}_{Y_1}, \widehat{H}_{Y_2}, \dots, \widehat{H}_{Y_m}$, one has:
\[
0 \longrightarrow \widehat{f}^\ast\mathcal{E}^\prime \longrightarrow \mathcal{O}_{\mathbb{P}^1}(a)^{\oplus r} \longrightarrow \mathcal{O}_{\mathbb{P}^1}(a) \longrightarrow 0
\]
Twisting this sequence by $\mathcal{O}_{\mathbb{P}^1}(-a-1)$ yields:
\[
0 \longrightarrow \widehat{f}^\ast\mathcal{E}^\prime(-a-1) \longrightarrow \mathcal{O}_{\mathbb{P}^1}(-1)^{\oplus r} \longrightarrow \mathcal{O}_{\mathbb{P}^1}(-1) \longrightarrow 0
\]
Now, one may write $\widehat{f}^\ast\mathcal{E}^\prime(-a-1) \simeq \oplus_{i=1}^{r-1}\mathcal{O}_{\mathbb{P}^1}(\beta_i)$ where $\Sigma_{i=1}^{r-1}\beta_i = - (r - 1)$. The inclusion $\widehat{f}^\ast\mathcal{E}^\prime(-a-1) \hookrightarrow \mathcal{O}_{\mathbb{P}^1}(-1)^{\oplus r}$ implies  that $\widehat{f}^\ast\mathcal{E}^\prime(-a-1)$ has no global sections, hence $\beta_i < 0$ for $1 \leq i \leq r-1$. But since $\Sigma_{i=1}^{r-1}\beta_i = - (r - 1)$, $\beta_i = -1$ for all $1 \leq i \leq r-1$. It follows that $\widehat{f}^\ast\mathcal{E}^\prime \simeq \mathcal{O}_{\mathbb{P}^1}(a)^{\oplus r-1}$ for every $[\widehat{f}] \in \widehat{H}_{Y_1}, \widehat{H}_{Y_2}, \dots, \widehat{H}_{Y_m}$. Now let $q^\prime: Y^\prime \longrightarrow Y$ be the finite surjective morphism obtained from induction, and $V_1, V_2, \dots, V_l$ the corresponding collection of families of rational curves in $\text{RatCurves}^n(Y^\prime)$ satisfiying the conditions in \ref{condition1}. Pulling back the exact sequence \eqref{E:star} to $Y^\prime$ one obtains:
\begin{equation}\label{E:starstar}
0 \longrightarrow \mathcal{L}^{\oplus r-1} \longrightarrow (q^{\prime} \circ p_Y)^\ast \mathcal{E} \longrightarrow \mathcal{L}^\prime \longrightarrow 0 
\end{equation}
where $\mathcal{L}$ is the uniquely defined line bundle coming from induction, and $\mathcal{L}^\prime = q^{\prime\ast} \mathcal{O}_{\mathbb{P}(\mathcal{E})}(1)|_Y$ for simplicity. (Note that \eqref{E:starstar} is exact since the sheaves in \eqref{E:star} are each locally free.) I claim that $\mathcal{L} \simeq \mathcal{L}^\prime$ as line bundles on $Y^\prime$: First notice that $\mathcal{L}$ and $\mathcal{L}^\prime$ agree on all of the rational curves parameterized by $V_1, V_2, \dots, V_l$. Indeed, for any $[f^\prime] \in V_1, V_2, \dots, V_l$:
\[
f^{\prime\ast}\mathcal{L}^{\oplus r-1} = (f^\prime \circ q^{\prime})^\ast\mathcal{E} = \widehat{f}^\ast\mathcal{E} \simeq \mathcal{O}_{\mathbb{P}^1}(a)^{\oplus r-1}
\]
\hspace*{15em}and
\[
f^{\prime\ast}\mathcal{L}^\prime = (f^\prime \circ q^{\prime})^\ast\mathcal{O}_{\mathbb{P}^1}(1)|_Y = \widehat{f}^\ast \mathcal{O}_{\mathbb{P}^1}(1)|_Y \simeq \mathcal{O}_{\mathbb{P}^1}(a)
\]
 where $[\widehat{f}] \in \widehat{H}_{Y1}, \widehat{H}_{Y2}, \dots, \widehat{H}_{Ym}$ is the image of $[f^\prime]$ under the map $q_\ast^\prime$ given in \ref{condition1}. Now, $N_1(Y^\prime)$ is generated by the classes of curves coming from $V_1, V_2, \dots, V_l$ \cite[IV.3.13.3]{MR1440180} and there exists a nondegenerate bilinear pairing
\[
N_1(Y^\prime) \times N^1(Y^\prime) \longrightarrow \mathbb{Q}
\]
given by the intersection number of curves and divisors. Since the pairing is nondegenerate, it follows that $\mathcal{L}^{-1}\otimes\mathcal{L}^\prime$ is numerically equivalent to $\mathcal{O}_{Y^\prime}$,  and therefore $\mathcal{L}^{-1}\otimes\mathcal{L}^\prime$ is torsion \cite[1.1.20]{MR2095471}. Let $\mbox{Spec}(\mathcal{A}) \longrightarrow Y^\prime$ be the unramified cyclic cover of $Y^\prime$ induced by the $\mathcal{O}_{Y^\prime}$-algebra $\mathcal{A} = \mathcal{O}_{Y^\prime} \oplus (\mathcal{L}^{-1}\otimes\mathcal{L}^\prime) \oplus (\mathcal{L}^{-1}\otimes\mathcal{L}^\prime)^{\otimes 2} \oplus \dots \oplus (\mathcal{L}^{-1}\otimes\mathcal{L}^\prime)^{\otimes m-1}$, where $m$ is the smallest positive integer such that $(\mathcal{L}^{-1}\otimes\mathcal{L}^\prime)^{\otimes m} = \mathcal{O}_{Y^\prime}$. By the inductive assumption, $Y^\prime$ is rationally connected, hence simply connected, therefore $\mbox{Spec}(\mathcal{A}) \longrightarrow Y^\prime$ must be trivial. Therefore $m = 1$ and $(\mathcal{L}^{-1}\otimes\mathcal{L}^\prime) \simeq \mathcal{O}_{Y^\prime}$ as desired.

Lastly, since $Y^\prime$ is rationally connected, \cite[IV.3.8]{MR1440180} implies that $0 = H^0(Y^\prime, \Omega_{Y^\prime}^1) \simeq H^1(Y^\prime, \mathcal{O}_{Y^\prime})$, and therefore the sequence \ref{E:starstar} splits. In other words, $(q^\prime \circ p_Y)^\ast\mathcal{E} \simeq \mathcal{L}^{\oplus r}$ on $Y^\prime$, and this completes the proof of Theorem \ref{T:AWfix}.
\end{proof}

\begin{lemma}\label{L:AW1.2.2} \cite[1.2.2]{MR1859022} Let $X$ be a smooth complex projective Fano variety with $p: \mathbb{P}(\mathcal{E}) \longrightarrow X$ a projectivization of a rank $r$ bundle. Suppose that $\Psi: Y \longrightarrow X$ is a finite morphism. If $\mathbb{P}(\Psi^\ast(\mathcal{E})) \simeq Y \times \mathbb{P}^{r-1}$ then $\mathbb{P}(\mathcal{E}) \simeq X \times \mathbb{P}^{r-1}.$ $\Box$
\end{lemma}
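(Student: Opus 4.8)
The plan is to realize $\mathbb{P}(\mathcal{E})$ as the quotient of a trivial $\mathbb{P}^{r-1}$-bundle by a finite group acting \emph{diagonally}, and then to show that the induced action on the $\mathbb{P}^{r-1}$-factor is trivial. I will use only that $X$ is smooth, projective and simply connected (the last being a consequence of $X$ being Fano), and I read the hypothesis as an isomorphism $\mathbb{P}(\Psi^{*}\mathcal{E})\simeq Y\times\mathbb{P}^{r-1}$ of $Y$-schemes (equivalently $\Psi^{*}\mathcal{E}\simeq\mathcal{M}^{\oplus r}$ for a line bundle $\mathcal{M}$ on $Y$), which is the form in which the lemma is applied.

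First I would reduce to the case that $\Psi$ is Galois: replacing $Y$ by its normalization and then by the normalization of $X$ in the Galois closure of $K(Y)/K(X)$, with group $G$, the isomorphism $\mathbb{P}(\Psi^{*}\mathcal{E})\simeq Y\times\mathbb{P}^{r-1}$ persists after base change, so we may assume $\Psi\colon Y\to X$ is finite surjective and Galois with group $G$ and $Y$ normal. If $G=1$ then $\Psi$ is an isomorphism and we are done; otherwise $\Psi$ is ramified along a nonempty divisor because $X$ is simply connected. Now $\mathbb{P}(\Psi^{*}\mathcal{E})=\mathbb{P}(\mathcal{E})\times_{X}Y$ carries the natural $G$-action on the second factor, with quotient $\mathbb{P}(\mathcal{E})$ (quotients commute with the flat base change $p$, and $Y/G=X$). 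Transporting this action to $Y\times\mathbb{P}^{r-1}$ through the chosen trivialization, each $g\in G$ acts covering the action of $g$ on $Y$, hence as $(y,v)\mapsto(gy,A_{g}(y)v)$ for a morphism $A_{g}\colon Y\to\mathrm{PGL}_{r}$; since $Y$ is proper and $\mathrm{PGL}_{r}$ is affine, $A_{g}$ is constant, and one checks that $g\mapsto A_{g}$ defines a homomorphism $A\colon G\to\mathrm{PGL}_{r}$. Thus the $G$-action on $Y\times\mathbb{P}^{r-1}$ is diagonal, and $\mathbb{P}(\mathcal{E})\simeq(Y\times\mathbb{P}^{r-1})/G$ over $X$.

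The crux is to show that $A$ is trivial; granting this, $G$ acts trivially on the $\mathbb{P}^{r-1}$-factor, so $\mathbb{P}(\mathcal{E})\simeq(Y/G)\times\mathbb{P}^{r-1}=X\times\mathbb{P}^{r-1}$ over $X$, as wanted. To show $A$ is trivial I would work along the ramification. Let $y$ be a general point of a component $R$ of the ramification divisor of $\Psi$; then $Y$ is smooth at $y$ (its singular locus has codimension $\ge 2$), and since $X=Y/G$ is smooth at $\Psi(y)$ and ramification is tame, the inertia group $G_{y}$ is cyclic of order $e>1$, generated by an element $g$ acting on $T_{y}Y$ with eigenvalues $(\zeta_{e},1,\dots,1)$. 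Pick a fixed point $v\in\mathbb{P}^{r-1}$ of $A(g)$, e.g.\ an eigenvector of a lift $\widetilde{A}(g)\in\mathrm{GL}_{r}$. Then $(y,v)$ is fixed by $g$ with stabilizer $\langle g\rangle$, so near its image $\mathbb{P}(\mathcal{E})$ is \'etale-locally the cyclic quotient $\bigl(T_{y}Y\oplus T_{v}\mathbb{P}^{r-1}\bigr)/\langle g\rangle$, where $g$ has eigenvalues $(\zeta_{e},1,\dots,1)$ on the first summand and the eigenvalues of $A(g)$ on $T_{v}\mathbb{P}^{r-1}$. Smoothness of $\mathbb{P}(\mathcal{E})$ forces this quotient to be smooth; but a cyclic group acting linearly has smooth quotient only if it is generated by pseudo-reflections, and here the pseudo-reflections in $\langle g\rangle$ generate a proper subgroup unless $A(g)$ acts trivially on $T_{v}\mathbb{P}^{r-1}$. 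Hence $A(g)$ acts trivially on $T_{v}\mathbb{P}^{r-1}$, and since $v$ is an eigenvector of $\widetilde{A}(g)$ this forces all eigenvalues of $\widetilde{A}(g)$ to coincide, i.e.\ $A(g)=1$. Finally, the normal subgroup $N$ of $G$ generated by all such inertia elements has $Y/N\to X$ \'etale in codimension $1$, hence \'etale by Zariski--Nagata purity of the branch locus (as $X$ is regular and $Y/N$ normal), hence an isomorphism since $X$ is simply connected and $Y/N$ is connected; so $N=G$ and $A$ is trivial.

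The step I expect to be the main obstacle is the local computation in the previous paragraph: one must carefully identify $\mathbb{P}(\mathcal{E})$ near the chosen points with the explicit cyclic quotient (this is where the local structure of the $G$-action, via Luna's slice theorem or a direct argument, enters), and then prove precisely the statement that a cyclic group acting linearly with smooth quotient, whose generator already has an eigenvalue $\ne 1$ in the base directions, must act trivially in the fibre directions. Once that is in hand, the remaining ingredients --- the Galois reduction, constancy of $A_{g}$ by properness of $Y$, and the purity argument eliminating the residual \'etale cover --- are routine. Note that the argument uses the Fano hypothesis only through $\pi_{1}(X)=1$.
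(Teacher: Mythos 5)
The paper does not prove this lemma at all: it is stated with a citation to \cite[1.2.2]{MR1859022} and a terminal $\Box$, so there is no in-paper argument to compare against. Your proof is, as far as I can check, complete and correct, and it is a genuine service to have one written out. Two remarks. First, your reading of the hypothesis as an isomorphism $\mathbb{P}(\Psi^\ast\mathcal{E})\simeq Y\times\mathbb{P}^{r-1}$ \emph{over} $Y$ is essential (an abstract isomorphism of varieties would not obviously suffice for your Galois descent, since the trivialization must base-change along $Y'\to Y$ and must transport the fiberwise-linear $G$-action to a family of elements of $\mathrm{PGL}_r$); this is the correct reading for the application in Theorem~\ref{T:sum}, where the hypothesis arises from $(q'\circ p_Y)^\ast\mathcal{E}\simeq\mathcal{L}^{\oplus r}$, and your conclusion is likewise an isomorphism over $X$, which is what that application needs. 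Second, the step you flag as the main obstacle does go through: at a general point $y$ of a ramification component the stabilizer $\langle g\rangle$ acts on $T_yY\oplus T_v\mathbb{P}^{r-1}$ with eigenvalues $(\zeta_e,1,\dots,1;\lambda_1,\dots,\lambda_{r-1})$, $\zeta_e$ primitive, so no nontrivial power $g^k$ kills the base eigenvalue; hence $g^k$ is a pseudo-reflection exactly when all $\lambda_j^k=1$, the reflection subgroup is $\langle g^{d_0}\rangle$ with $d_0=\mathrm{lcm}_j\,\mathrm{ord}(\lambda_j)$, and Chevalley--Shephard--Todd together with smoothness of $\mathbb{P}(\mathcal{E})$ forces $d_0=1$, i.e.\ $A(g)=1$. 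The remaining ingredients (quotients commuting with the flat base change $p$, constancy of $A_g$ by properness, cyclicity of tame inertia at a general codimension-one ramification point, and Zariski--Nagata purity plus $\pi_1(X)=1$) are all standard and correctly deployed. The only cosmetic caveat is that one should first replace $Y$ by an irreducible component dominating $X$ and note that $\Psi$ must be assumed surjective for the statement to have content, but this is how the lemma is used.
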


\begin{theorem}\label{T:sum} Let $X$ be a smooth complex projective Fano variety, $\mathcal{E}$ a vector bundle of rank $r$ on $X$, and $H_1, H_2, \dots, H_k \subseteq \text{RatCurves}^n(X)$ a collection of rational curves such that $X$ is $(H_1, H_2, \dots, H_k)$-rationally connected. If there exists an integer $a \in \mathbb{Z}$ such that $f^\ast \mathcal{E} \simeq \mathcal{O}_{\mathbb{P}^1}(a)^{\oplus r}$ for every $[f] \in H_1, H_2, \dots, H_k$, then there is a uniquely defined line bundle $\mathcal{L}$ on $X$ such that $deg(f^\ast \mathcal{L}) = a$ and $\mathcal{E} \simeq \mathcal{L}^{\oplus r}$.
\end{theorem}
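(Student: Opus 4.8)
The plan is to reduce everything to Theorem~\ref{T:AWfix} and then descend the resulting triviality along a finite cover using Lemma~\ref{L:AW1.2.2}. First I would apply Theorem~\ref{T:AWfix} to the data $(X,\mathcal{E},H_1,\dots,H_k)$: since $X$ is $(H_1,\dots,H_k)$-rationally connected and $f^\ast\mathcal{E}\simeq\mathcal{O}_{\mathbb{P}^1}(a)^{\oplus r}$ for every curve parameterized by every $H_i$, the theorem yields a finite surjective morphism $q\colon Y\longrightarrow X$, a collection $V_1,\dots,V_l$ of families on $Y$ with respect to which $Y$ is rationally connected, and a line bundle $\mathcal{L}_Y$ on $Y$ with $\degree(f^\ast\mathcal{L}_Y)=a$ and $q^\ast\mathcal{E}\simeq\mathcal{L}_Y^{\oplus r}$.

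Next I would pass to projectivizations. From $q^\ast\mathcal{E}\simeq\mathcal{L}_Y^{\oplus r}$ one gets $\mathbb{P}(q^\ast\mathcal{E})\simeq\mathbb{P}(\mathcal{L}_Y^{\oplus r})\simeq\mathbb{P}(\mathcal{O}_Y^{\oplus r})=Y\times\mathbb{P}^{r-1}$ over $Y$, since twisting a bundle by a line bundle does not change its projectivization. Now $X$ is Fano and $q$ is finite, so Lemma~\ref{L:AW1.2.2} applies with $\Psi=q$ and gives $\mathbb{P}(\mathcal{E})\simeq X\times\mathbb{P}^{r-1}$ over $X$. Translating this back, two vector bundles on $X$ have $X$-isomorphic projectivizations if and only if they differ by a twist by a line bundle, so there is a line bundle $\mathcal{N}$ on $X$ with $\mathcal{E}\otimes\mathcal{N}\simeq\mathcal{O}_X^{\oplus r}$, that is, $\mathcal{E}\simeq\mathcal{L}^{\oplus r}$ with $\mathcal{L}:=\mathcal{N}^{-1}$. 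Restricting to any $[f]\in H_1$ gives $\mathcal{O}_{\mathbb{P}^1}(a)^{\oplus r}\simeq f^\ast\mathcal{E}\simeq(f^\ast\mathcal{L})^{\oplus r}$, hence $f^\ast\mathcal{L}\simeq\mathcal{O}_{\mathbb{P}^1}(a)$ and $\degree(f^\ast\mathcal{L})=a$.

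For uniqueness, suppose $\mathcal{L}^{\oplus r}\simeq\mathcal{E}\simeq\mathcal{L}_1^{\oplus r}$ for line bundles $\mathcal{L},\mathcal{L}_1$ on $X$. Taking determinants gives $\mathcal{L}^{\otimes r}\simeq\mathcal{L}_1^{\otimes r}$, so $\mathcal{L}\otimes\mathcal{L}_1^{-1}$ is an $r$-torsion class in $\mathrm{Pic}(X)$. Since $X$ is Fano it is rationally connected, hence simply connected, so $H_1(X,\mathbb{Z})=0$ and $H^2(X,\mathbb{Z})$ is torsion-free; together with $H^1(X,\mathcal{O}_X)=0$ this makes $\mathrm{Pic}(X)$ torsion-free, forcing $\mathcal{L}\simeq\mathcal{L}_1$.

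The essential work has already been carried out in the proof of Theorem~\ref{T:AWfix}; in assembling the present statement the only slightly delicate points are the descent of projective-bundle triviality along the finite cover $q$ (handled by Lemma~\ref{L:AW1.2.2}, which is where the Fano hypothesis on $X$ enters) and the torsion-freeness of the Picard group of a Fano variety, used for the uniqueness clause. Both are standard, so I do not anticipate a serious obstacle here.
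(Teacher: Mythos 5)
Your proposal is correct and follows exactly the route the paper takes: the paper's own proof is the one-line remark that the statement is ``immediate from Theorem~\ref{T:AWfix} and Lemma~\ref{L:AW1.2.2},'' and you have simply supplied the routine details (the twist-invariance of projectivization, the descent via Lemma~\ref{L:AW1.2.2}, and the torsion-freeness of $\mathrm{Pic}$ of a Fano variety for uniqueness), all of which check out.
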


\begin{proof}
This is immediate from Theorem \ref{T:AWfix} and Lemma \ref{L:AW1.2.2}.
\end{proof}

\begin{remark}When $X$ is both uniruled and $\rho(X) = 1$, $X$ must be Fano. Therefore Theorem \ref{T:AW1.2} follows from Theorem \ref{T:sum}.
\end{remark}

Continuing with the proof of Theorem \ref{T:main}, use Theorem \ref{T:sum} to define a new vector bundle $\mathcal{F} := \mathcal{L}^{\oplus p}$ on $X$, and note that in our case $\mathcal{L}$ is ample. Recall that $p \leq r$ by assumption, therefore $\mathcal{F} \subseteq \mathcal{E}$. It follows that $\mathcal{L}^{\otimes p} = \mbox{det}(\mathcal{F}) \subseteq \wedge^p \mathcal{E} \subseteq \wedge^p T_X$. By \cite[6.3]{MR2439607} (= Theorem \ref{T:ADK}), either $X \simeq \mathbb{P}^n$ or $X \simeq Q_p \subseteq \mathbb{P}^{p+1}$.
\newline
\newline
CASE II: Now assume that $\rho(X) = 1$ and consider the case that $f^\ast \mathcal{E} \simeq \mathcal{O}_{\mathbb{P}^1}(2) \oplus \mathcal{O}_{\mathbb{P}^1}(1)^{\oplus r-1}$ for every $[f] \in H$. I will show that there exists a vector bundle injection $f^\ast \mathcal{E} \hookrightarrow f^\ast T_X^+$, and use this fact to study the geometry of the variety of minimal rational tangents $\mathcal{C}_x$ at general points $x \in X$. 

\begin{lemma}\label{L:vbInj}
Let $X$ be a smooth complex projective variety of dimension $n$, and let $\mathcal{E}$ be an ample vector bundle of rank $r$ on $X$ such that $\wedge^p \mathcal{E} \subseteq \wedge^p T_X$ for some positive integer $p \leq r$. Let $H$ be a minimal dominating family of rational curves on $X$, and let $[f] \in H$ be a general member. If $f^\ast \mathcal{E} \simeq \mathcal{O}_{\mathbb{P}^1}(2) \oplus \mathcal{O}_{\mathbb{P}^1}(1)^{\oplus r-1}$, then there exists a vector bundle injection $f^\ast \mathcal{E} \hookrightarrow f^\ast T_X^+$.
\end{lemma}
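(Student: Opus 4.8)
The plan is to reduce the statement to the purely numerical inequality $r\le d+1=\operatorname{rank}\bigl((f^\ast T_X)^+\bigr)$. Once this is known, the injection is the obvious split one: for general $[f]\in H$ one has, as recalled in Section~\ref{S:Background}, $f^\ast T_X\simeq\mathcal{O}_{\mathbb{P}^1}(2)\oplus\mathcal{O}_{\mathbb{P}^1}(1)^{\oplus d}\oplus\mathcal{O}_{\mathbb{P}^1}^{\oplus n-d-1}$, so that $(f^\ast T_X)^+\simeq\mathcal{O}_{\mathbb{P}^1}(2)\oplus\mathcal{O}_{\mathbb{P}^1}(1)^{\oplus d}$; combined with the hypothesis $f^\ast\mathcal{E}\simeq\mathcal{O}_{\mathbb{P}^1}(2)\oplus\mathcal{O}_{\mathbb{P}^1}(1)^{\oplus r-1}$ and $r-1\le d$, one simply maps the $\mathcal{O}_{\mathbb{P}^1}(2)$-summand identically and the $r-1$ copies of $\mathcal{O}_{\mathbb{P}^1}(1)$ onto $r-1$ of the $d$ copies of $\mathcal{O}_{\mathbb{P}^1}(1)$; the cokernel is $\mathcal{O}_{\mathbb{P}^1}(1)^{\oplus d-r+1}$, locally free, so this is a genuine vector bundle (subbundle) injection.

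So the real work is $r\le d+1$. First I would pull back the inclusion $\wedge^p\mathcal{E}\hookrightarrow\wedge^p T_X$ along a general $f\colon\mathbb{P}^1\to X$, obtaining a map $\phi\colon\wedge^p(f^\ast\mathcal{E})\to\wedge^p(f^\ast T_X)$, and argue that $\phi$ is injective: since $\wedge^p\mathcal{E}\hookrightarrow\wedge^p T_X$ is a subbundle inclusion over a dense open $U\subseteq X$ and $H$ is a dominating family, a general curve $f(\mathbb{P}^1)$ meets $U$, so $\phi$ restricted to $f^{-1}(U)$ is a subbundle inclusion; in particular $\phi$ is injective on some fibre, hence has maximal rank, hence its kernel (a torsion-free, thus locally free, subsheaf of $\wedge^p(f^\ast\mathcal{E})$ on the smooth curve $\mathbb{P}^1$) has generic rank $0$ and therefore vanishes. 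Then I compute splitting types: $f^\ast\mathcal{E}\simeq\mathcal{O}_{\mathbb{P}^1}(2)\oplus\mathcal{O}_{\mathbb{P}^1}(1)^{\oplus r-1}$ gives $\wedge^p(f^\ast\mathcal{E})\simeq\mathcal{O}_{\mathbb{P}^1}(p+1)^{\oplus\binom{r-1}{p-1}}\oplus\mathcal{O}_{\mathbb{P}^1}(p)^{\oplus\binom{r-1}{p}}$, while a summand of $\wedge^p(f^\ast T_X)$ has degree $\le p+1$, with degree exactly $p+1$ occurring (when $d\ge p-1$) with multiplicity $\binom{d}{p-1}$.

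The decisive step is to restrict $\phi$ to the maximally positive summand $\mathcal{O}_{\mathbb{P}^1}(p+1)^{\oplus\binom{r-1}{p-1}}\subseteq\wedge^p(f^\ast\mathcal{E})$, which is nonzero because $r\ge p$ forces $\binom{r-1}{p-1}\ge 1$. Since $\operatorname{Hom}\bigl(\mathcal{O}_{\mathbb{P}^1}(p+1),\mathcal{O}_{\mathbb{P}^1}(k)\bigr)=0$ for $k\le p$, this summand must map into the span of the degree-$(p+1)$ summands of $\wedge^p(f^\ast T_X)$; their mere existence forces $d\ge p-1$, and in that range they form $\mathcal{O}_{\mathbb{P}^1}(p+1)^{\oplus\binom{d}{p-1}}$. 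Injectivity of $\phi$ then yields an injective bundle map $\mathcal{O}_{\mathbb{P}^1}(p+1)^{\oplus\binom{r-1}{p-1}}\hookrightarrow\mathcal{O}_{\mathbb{P}^1}(p+1)^{\oplus\binom{d}{p-1}}$, hence $\binom{r-1}{p-1}\le\binom{d}{p-1}$; since $m\mapsto\binom{m}{p-1}$ is strictly increasing for $m\ge p-1$ and both $r-1$ and $d$ are $\ge p-1$, this gives $r-1\le d$. Feeding $r\le d+1$ back into the first paragraph produces the desired injection $f^\ast\mathcal{E}\hookrightarrow f^\ast T_X^+$.

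I expect the only genuinely delicate point to be the injectivity of $\phi$, i.e. checking that pulling back the subsheaf inclusion $\wedge^p\mathcal{E}\hookrightarrow\wedge^p T_X$ along a \emph{general} member of $H$ keeps it injective; this rests on a general curve of $H$ meeting the open subbundle locus of the inclusion, together with the elementary fact that on $\mathbb{P}^1$ a morphism of locally free sheaves whose kernel vanishes at one point vanishes everywhere. After that, the argument is bookkeeping with splitting types and an elementary monotonicity property of binomial coefficients.
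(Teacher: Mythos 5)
Your proposal is correct and takes essentially the same route as the paper: count the degree-$(p+1)$ summands, namely $\binom{r-1}{p-1}$ in $\wedge^p(f^\ast\mathcal{E})$ versus $\binom{d}{p-1}$ in $\wedge^p(f^\ast T_X)$, deduce $r-1\le d$ since $p+1$ is the top degree occurring, and then write down the summand-by-summand injection $f^\ast\mathcal{E}\hookrightarrow (f^\ast T_X)^+$. Your additional care about the injectivity of the pulled-back inclusion and the strict monotonicity of $m\mapsto\binom{m}{p-1}$ merely fills in details the paper leaves implicit.
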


\begin{proof}
Since $[f] \in H$ is a general member, the splitting type of $T_X$ on the curve parameterized by $[f]$ is $f^\ast T_X \simeq$ \splitTX \cite[IV.2.9, IV.2.10]{MR1440180}. When $f^\ast \mathcal{E} \simeq \mathcal{O}_{\mathbb{P}^1}(2) \oplus \mathcal{O}_{\mathbb{P}^1}(1)^{\oplus r-1}$, a simple counting argument shows that $r - 1 \leq d$: If $f^\ast\mathcal{E} \simeq \mathcal{O}_{\mathbb{P}^1}(2) \oplus \mathcal{O}_{\mathbb{P}^1}(1)^{\oplus r-1}$ then $f^\ast(\wedge^p\mathcal{E})$ splits as a sum of line bundles of which exactly $\binom{r-1}{p-1}$ have degree $p + 1$. A similar computation shows that the direct sum decomposition of $f^\ast(\wedge^p T_X)$ includes exactly $\binom{d}{p-1}$ line bundles of degree $p+1$. Since $p + 1$ is the largest degree of any line bundle occuring in the decomposition of $f^\ast(\wedge^pT_X)$ and since I assume $f^\ast(\wedge^p\mathcal{E}) \subseteq f^\ast(\wedge^p T_X)$, it follows that $r - 1 \leq d$. But then $f^\ast\mathcal{E} \hookrightarrow f^\ast T_X^{+} \simeq\mathcal{O}_{\mathbb{P}^1}(2) \oplus \mathcal{O}_{\mathbb{P}^1}(1)^{\oplus d}$ as desired.
\end{proof}

Now let $x \in X$ be a general point, $H_x$ the normalization of the subscheme of $H$ parameterizing curves passing through $x \in X$, and $\tau_x: H_x \longrightarrow \mathcal{C}_x \subseteq \mathbb{P}(T_xX)$ the tangent map defined in Section 2. Let $H_x^i$, $1 \leq i \leq k$, be the irreducible components of $H_x$, and define $\mathcal{C}_x^i := \mbox{im}(\tau_x(H_x^i))$. Fix an irreducible component $H_x^i$ and let $[f] \in H_x^i$ be a general member $f: \mathbb{P}^1 \longrightarrow X$ such that $f(o) = x$ for a point $o \in \mathbb{P}^1$. The fiber $(f^\ast T_X)_o$ of $f^\ast T_X$ over the point $o$ is naturally isomorphic to $T_xX$, and under this isomorphism the positive part $(f^\ast T_X)^+_o \subset (f^\ast T_X)_o$ cuts out a $(d+1)$-dimensional linear subspace $T_xX_f^+ \subseteq T_xX$. By Lemma \ref{L:vbInj}, $(f^\ast \mathcal{E})_o \hookrightarrow (f^\ast T_X)_o^+$, and this induces the inclusion $\mathcal{E}_x \subseteq T_xX^+_f$. By \cite[2.3]{MR1919462} (= Lemma \ref{L:Hwa}), it follows that $\mathbb{P}(\mathcal{E}_x) \subseteq \overline{T_{\tau_x([f])}\mathcal{C}_x^i} \subseteq \mathbb{P}(T_xX)$. Now the argument in \cite[4.1, 4.2, 4.3]{MR2232023} implies that $\mathcal{C}_x^i$ is a linear subspace of $\mathbb{P}(T_xX)$; I include an outline of the main steps here for the convenience of the reader: By Lemma \ref{L:Ara4.2} below, the inclusion $\mathbb{P}(\mathcal{E}_x) \subseteq \overline{T_{\tau_x([f])}\mathcal{C}_x^i}$ forces $\mathcal{C}_x^i$ to have the structure of a cone in $\mathbb{P}(T_xX)$ with $\mathbb{P}(\mathcal{E}_x)$ contained in its vertex. Now the result follows from Lemma \ref{L:Ara4.3} and the fact that $H_x$ is smooth \cite[II.1.7, II.2.16]{MR1440180} and $\tau_x: H_x \longrightarrow \mathcal{C}_x$ is the normalization morphism (\cite{MR1874114}, \cite{MR2128297} = Theorem \ref{T:CxNormal}.)

\begin{lemma}\label{L:Ara4.2}
\cite[4.2]{MR2232023} Let $Z$ be an irreducible closed subvariety of $\mathbb{P}^m$. Assume that there is a dense open subset $U$ of the smooth locus of $Z$ and a point $z_0 \in \mathbb{P}^m$ such that $z_0 \in \bigcap_{z \in U}T_zZ$. Then $Z$ is a cone in $\mathbb{P}^m$ and $z_0$ lies in the vertex of this cone.
\end{lemma}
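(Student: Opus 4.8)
The plan is to reinterpret the tangency hypothesis as the statement that a suitable $\mathbb{G}_m$-action on $\mathbb{P}^m$ preserves $Z$, and then to recognize the orbit closures as precisely the lines through $z_0$ that sweep out the cone.

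First I would choose homogeneous coordinates with $z_0=[1:0:\cdots:0]$ and consider the $\mathbb{G}_m$-action $\lambda\cdot[x_0:x_1:\cdots:x_m]=[x_0:\lambda x_1:\cdots:\lambda x_m]$ on $\mathbb{P}^m$, whose fixed locus is $\{z_0\}\cup H$ with $H=\{x_0=0\}$. In the affine chart $\{x_0=1\}$, in which $z_0$ is the origin, its infinitesimal generator is the Euler-type vector field $\xi=\sum_{i=1}^m x_i\,\partial_{x_i}$; at any point $z\notin H\cup\{z_0\}$ the vector $\xi(z)$ spans the direction of the line $\ell_z:=\overline{z_0z}$, and the closure of the orbit $\mathbb{G}_m\cdot z$ is exactly $\ell_z$. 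Since $z\in T_zZ$ for every smooth $z$ and $T_zZ$ is a linear subspace of $\mathbb{P}^m$, the condition $z_0\in T_zZ$ is equivalent to $\ell_z\subseteq T_zZ$, i.e.\ to $\xi(z)\in T_zZ$. Thus the hypothesis says exactly that $\xi$ is tangent to $Z$ at every point of $U$.

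Next I would promote this to tangency along all of the smooth locus $Z^{\mathrm{sm}}$: restricting $\xi$ produces a section of the normal bundle $N_{Z^{\mathrm{sm}}/\mathbb{P}^m}$ whose vanishing locus is closed and contains the dense subset $U$, so, since $Z$ (hence $Z^{\mathrm{sm}}$) is irreducible, this section vanishes identically. A vector field tangent to a smooth locally closed subvariety integrates to a flow preserving it, hence preserving its closure; equivalently, the $\mathbb{G}_m$-action stabilizes $Z$. One checks moreover that $Z\not\subseteq H$: otherwise $T_zZ\subseteq T_zH=H$ for $z\in Z^{\mathrm{sm}}$, contradicting $z_0\in T_zZ$ together with $z_0\notin H$. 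Consequently a general smooth point $z$ of $Z$ lies off $H$ and is distinct from $z_0$, so its orbit closure $\ell_z$ is a line through $z_0$ lying in the $\mathbb{G}_m$-stable closed set $Z$. As $z$ varies over a dense subset of $Z$ these lines sweep out a dense subset of $Z$, whence $Z=\overline{\bigcup_z\ell_z}$ is a cone whose vertex contains $z_0$.

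I expect the only genuinely delicate point to be the step propagating tangency of $\xi$ from the dense open $U$ to all smooth points of $Z$, together with the small bookkeeping that ``$z_0\in T_zZ$'' is really the infinitesimal condition ``$\xi(z)\in T_zZ$'' for the embedded projective tangent space. The orbit/cone identification and the verification that $Z\not\subseteq H$ are routine. If one prefers to avoid the $\mathbb{G}_m$-action, an equivalent route is to project from $z_0$: the hypothesis forces the differential of $\pi|_Z\colon Z\dashrightarrow\mathbb{P}^{m-1}$ to be non-injective at every point of a dense open subset of $Z$, so by generic smoothness $\pi|_Z$ cannot be generically finite; its general fibres, each contained in a line through $z_0$, are therefore positive-dimensional, forcing those lines to lie in $Z$, and one again concludes that $Z$ is a cone with $z_0$ in its vertex.
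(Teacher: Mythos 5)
Your argument is correct. Note that the paper does not actually prove this lemma -- it is quoted verbatim from \cite[4.2]{MR2232023} -- and the proof in that source is essentially your closing ``alternative route'': project from $z_0$, observe that $z_0\in T_zZ$ forces the differential of $\pi|_Z$ to have nontrivial kernel (the direction of $\ell_z$) at every point of a dense open set, conclude by generic smoothness in characteristic zero that $\dim\pi(Z)<\dim Z$, and deduce that the general fibre, being positive-dimensional inside a line through $z_0$, forces that line into $Z$. Your primary route via the $\mathbb{G}_m$-action $[x_0:x_1:\cdots:x_m]\mapsto[x_0:\lambda x_1:\cdots:\lambda x_m]$ is a genuinely different and equally valid mechanism: the translation of ``$z_0\in T_zZ$'' into ``$\xi(z)\in T_zZ$'' is exactly right, and propagating tangency from $U$ to all of $Z^{\mathrm{sm}}$ via the vanishing locus of the induced section of $N_{Z^{\mathrm{sm}}/\mathbb{P}^m}$ is sound (note $\xi$ extends to a global vector field on $\mathbb{P}^m$, and it vanishes identically on $H$, so points of $U\cap H$ cause no trouble). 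The one step you should write out rather than assert is that tangency of $\xi$ along the smooth locus makes $Z$ stable under the \emph{entire} group, not merely under a short-time local flow through smooth points: either check that $\xi$ preserves the radical ideal $I_Z$ (for $f\in I_Z$ and $z\in Z^{\mathrm{sm}}$ one has $\xi(f)(z)=df_z(\xi(z))=0$ since $\xi(z)\in T_zZ\subseteq\ker df_z$, whence $\xi(f)\in I_Z$ and $\exp(t\xi)$ preserves $Z$ for all $t$), or note that the action map $\mathbb{G}_m\times Z^{\mathrm{sm}}\to\mathbb{P}^m$ has differential of rank $\dim Z$ everywhere, so $\overline{\mathbb{G}_m\cdot Z}$ is irreducible of dimension $\dim Z$ containing $Z$ and hence equals $Z$. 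With that filled in, the identification of orbit closures with the lines $\ell_z$, the verification that $Z\not\subseteq H$, and the observation that the closure of a union of lines through $z_0$ is again such a union complete the proof. Both routes use characteristic zero, which is harmless since the paper works over $\mathbb{C}$; what the $\mathbb{G}_m$ picture buys is a conceptual reason for the cone structure (it is the fixed-point/attractor geometry of a one-parameter subgroup), while the projection argument is shorter and avoids any integration of vector fields.
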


\begin{lemma}\label{L:Ara4.3}
\cite[4.3]{MR2232023} If $Z$ is an irreducible cone in $\mathbb{P}^m$ and the normalization of $Z$ is smooth, then $Z$ is a linear subspace of $\mathbb{P}^m$.
\end{lemma}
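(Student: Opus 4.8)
\medskip
\noindent\emph{Sketch of a proposed proof.} The plan is to translate the statement into a Zariski tangent space computation at the vertex of the cone, carried out in the homogeneous coordinate ring of its base. First I would reduce to the case where $Z$ spans $\mathbb{P}^m$: replacing $\mathbb{P}^m$ by the linear span $\langle Z\rangle$ affects neither the hypothesis (the normalization is intrinsic to $Z$, and $Z$ remains a cone inside $\langle Z\rangle$ since any point of its vertex already lies in $\langle Z\rangle$) nor the conclusion. So assume $Z$ is nondegenerate in $\mathbb{P}^m$; it then suffices to show $\dim Z=m$, since an $m$-dimensional irreducible subvariety of $\mathbb{P}^m$ is all of $\mathbb{P}^m$.

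Next, pick a point $p$ in the (nonempty) vertex of $Z$ and let $\bar Z:=\pi_p(Z)\subseteq\mathbb{P}^{m-1}$ be the image of $Z$ under linear projection from $p$. Then $Z$ is the cone over $\bar Z$ with vertex $p$, and $\bar Z$ is nondegenerate in $\mathbb{P}^{m-1}$: otherwise $Z$, being the cone over $\bar Z$, would lie in the cone over a hyperplane of $\mathbb{P}^{m-1}$, i.e.\ in a hyperplane of $\mathbb{P}^m$. Write $S=\bigoplus_{d\ge 0}S_d$ for the homogeneous coordinate ring of $\bar Z\subseteq\mathbb{P}^{m-1}$. A direct check shows that the homogeneous coordinate ring of $Z$ is $S[t]$ (one new variable of degree $1$), and that in the affine chart $\{t\neq 0\}$ containing $p$ the variety $Z$ is the affine cone $\operatorname{Spec}S$ with $p$ its vertex; hence $\mathcal{O}_{Z,p}\cong S_{\mathfrak{m}}$, where $\mathfrak{m}=S_{>0}$.

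Now consider the normalization $\nu\colon\widetilde Z\to Z$. Over the chart $\operatorname{Spec}S$ it is $\operatorname{Spec}\overline S\to\operatorname{Spec}S$, where $\overline S=\bigoplus_d\overline S_d$ is the graded integral closure of $S$; moreover there is a unique point $\widetilde p$ over $p$, because $(\overline S)_{\mathfrak m}$ is local --- indeed $\overline S/\mathfrak{m}\overline S$ is a finite-dimensional graded $k$-algebra whose degree-zero part is $k$, hence local --- so $\mathcal{O}_{\widetilde Z,\widetilde p}\cong(\overline S)_{\overline{\mathfrak m}}$ with $\overline{\mathfrak m}=\overline S_{>0}$. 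The hypothesis that $\widetilde Z$ is smooth forces $\mathcal{O}_{\widetilde Z,\widetilde p}$ to be regular of dimension $n:=\dim Z$, i.e.\ $\dim_k\overline{\mathfrak m}/\overline{\mathfrak m}^2=n$. But $\overline{\mathfrak m}^2$ vanishes in degree $1$, so $\overline{\mathfrak m}/\overline{\mathfrak m}^2$ contains $\overline S_1\supseteq S_1$, and $\dim_k S_1=m$ because $\bar Z$ is nondegenerate. Hence $n\ge m$; together with $n\le m$ (as $Z\subseteq\mathbb{P}^m$) this gives $n=m$, so $Z=\mathbb{P}^m$ is a linear subspace.

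The step demanding the most care is the commutative-algebra translation in the last two paragraphs: verifying that $\bar Z$ stays nondegenerate after projection, that $\mathcal{O}_{Z,p}$ really is the local ring at the vertex of $\operatorname{Spec}S$, and that the normalization is computed by the graded integral closure with the claimed local localization (so that the tangent space at $\widetilde p$ is exactly $\overline{\mathfrak m}/\overline{\mathfrak m}^2$). Once this is in place the estimate $\dim_k\overline{\mathfrak m}/\overline{\mathfrak m}^2\ge m$ is immediate. Alternatively one can argue geometrically --- resolve $Z$ by the $\mathbb{P}^1$-bundle $\mathbb{P}(\mathcal{O}\oplus\mathcal{O}(1))$ over the normalization of $\bar Z$, observe that $\widetilde Z$ is obtained from it by a Grauert contraction of the negative section, and use that a cone with smooth vertex over a polarized projective variety must be $(\mathbb{P}^{\,n-1},\mathcal{O}(1))$ --- but the coordinate-ring argument is shorter and sidesteps the normality subtleties of section rings.
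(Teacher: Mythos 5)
Your argument is correct, and it is worth noting that the paper itself offers no proof to compare against: Lemma~\ref{L:Ara4.3} is quoted verbatim from \cite[4.3]{MR2232023}, so your write-up is a self-contained substitute for a cited result rather than an alternative to an argument in the text. The core idea --- that the embedding dimension of the normalization at the unique point over a vertex point is at least $\dim_k S_1$, which equals $m$ by nondegeneracy, while regularity caps it at $\dim Z \leq m$ --- is clean and complete. The identifications you flag as delicate all check out: $Z=C_p(\bar Z)$ has homogeneous coordinate ring $S[t]$ because $I(C_p(\bar Z))$ is generated by $I(\bar Z)$; the chart $\{t\neq 0\}$ is $\operatorname{Spec}S$ with $p$ the vertex; and normalization commutes with restriction to this chart. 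In a final version you should make explicit the two standard graded facts your argument silently uses about $\overline S$: that the integral closure of a $\mathbb{Z}$-graded domain is again graded, and that for $S$ non-negatively graded with $S_0=k=\bar k$ one has $\overline S_{<0}=0$ and $\overline S_0=k$ (a homogeneous integral element of degree $d\le 0$ satisfies, after taking the degree-$Nd$ component of its integral equation, an equation over $k$). These are exactly what make $\overline S/\mathfrak m\overline S$ local with maximal ideal its positive part, and what guarantee $(\overline{\mathfrak m}/\overline{\mathfrak m}^2)_1=\overline S_1\supseteq S_1$. The geometric alternative you sketch at the end is dispensable and, as you note, would require projective normality caveats; the coordinate-ring proof stands on its own.
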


From here one can conclude that the irreducible components of $\mathcal{C}_x$ are all linear subspaces of $\mathbb{P}(T_xX)$. The following proposition of J.M. Hwang shows that in this case $\mathcal{C}_x$ is actually irreducible, thus itself a linear subspace of $\mathbb{P}(T_xX)$:

\begin{proposition}\label{P:Hwa2.2}
\cite[2.2]{MR1919462} Let $X$ be a smooth complex projective variety, $H$ a minimal dominating family of rational curves on $X$, and $\mathcal{C}_x \in \mathbb{P}(T_xX)$ the corresponding variety of minimal rational tangents at $x \in X$. Assume that for a general $x \in X$, $\mathcal{C}_x$ is a union of linear subspaces of $\mathbb{P}(T_xX)$. Then the intersection of any two irreducible components of $\mathcal{C}_x$ is empty.
\end{proposition}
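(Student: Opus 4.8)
The plan is to use the fact that the tangent map $\tau_x$ is a birational normalization to rigidify both the components of $\mathcal{C}_x$ and those of $H_x$, and then to assume two components meet and force a contradiction by propagating the common tangent direction along a minimal rational curve. First I would fix a general point $x \in X$, so that $H_x$ is smooth projective of pure dimension $d$ and $\tau_x \colon H_x \to \mathcal{C}_x$ is the finite birational normalization morphism (Theorem~\ref{T:CxNormal}). Since $H_x$ is smooth, its irreducible components $H_x^1, \dots, H_x^k$ are pairwise disjoint and smooth of dimension $d$; putting $\mathcal{C}_x^i := \tau_x(H_x^i)$, each $\mathcal{C}_x^i$ is a $d$-dimensional subvariety of $\mathbb{P}(T_xX)$, hence by hypothesis a linear $\mathbb{P}^d$. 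Because $\tau_x$ is birational and a general point of $\mathcal{C}_x^i$ lies on no other component (distinct irreducible subvarieties of the same dimension are never nested), $\tau_x|_{H_x^i}\colon H_x^i \to \mathcal{C}_x^i$ is birational; being finite and birational from a smooth — hence normal — variety onto the normal variety $\mathbb{P}^d$, it is an isomorphism by Zariski's Main Theorem. Consequently, for every $[v] \in \mathcal{C}_x^i$ there is exactly one member of $H_x^i$, and its curve is smooth at $x$ with tangent direction $[v]$.

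Next I would argue by contradiction. Suppose two components meet; after shrinking $X$ and passing to a finite étale cover so that the labeling of the components of $H_y$ is globally defined, I may assume $\mathcal{C}_y^1 \cap \mathcal{C}_y^2 \ne \emptyset$ for all general $y$. By the previous paragraph each $\mathcal{C}_y^i$ is a linear $\mathbb{P}^d = \mathbb{P}(W^{(i)}_y)$ for a $(d+1)$-dimensional subspace $W^{(i)}_y \subseteq T_yX$. Using Lemma~\ref{L:Hwa} I would show that, for a general member $[g]$ of $H$ lying in component $i$, the positive part $(g^\ast T_X)^+$ has fiber over $o$ equal to $W^{(i)}_{g(o)}$; evaluating the same statement at a general point of $g(\mathbb{P}^1)$ then gives $(g^\ast T_X)^+ = g^\ast W^{(i)}$, where $W^{(i)} \subseteq T_X$ is a rank-$(d+1)$ subbundle over a dense open subset, so that every curve of family $i$ is tangent to $W^{(i)}$. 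By the methods of Araujo \cite{MR2232023}, each $W^{(i)}$ is an integrable distribution whose leaf through a general point $y$ is $\operatorname{Locus}(H_y^i)$. Since $\mathbb{P}(W^{(1)}_y \cap W^{(2)}_y) = \mathcal{C}_y^1 \cap \mathcal{C}_y^2 \ne \emptyset$, the subsheaf $W^{(1)} \cap W^{(2)}$ is a nonzero subbundle of some rank $m$ with $1 \le m \le d$; note that $m = d+1$ would force $W^{(1)} = W^{(2)}$ and hence $\mathcal{C}_y^1 = \mathcal{C}_y^2$, contrary to assumption.

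Then I would extract the contradiction. The intersection of integrable distributions is integrable, so $W^{(1)} \cap W^{(2)}$ is an integrable distribution of rank $m$; let $L$ be its leaf through a general point $y$, an $m$-dimensional integral variety with $\mathbb{P}(T_yL) = \mathcal{C}_y^1 \cap \mathcal{C}_y^2$. For each direction $[v] \in \mathcal{C}_y^1 \cap \mathcal{C}_y^2$ there is a unique family-$2$ curve through $y$ tangent to $[v]$, and as $[v]$ ranges over $\mathbb{P}(T_yL) \simeq \mathbb{P}^{m-1}$ these curves should sweep out exactly $\overline{L}$; thus family $2$ restricts to a dominating family of rational curves on $\overline{L}$ whose variety of minimal rational tangents at $y$ is all of $\mathbb{P}(T_y\overline{L})$, and symmetrically for family $1$. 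By a characterization of projective space of Cho--Miyaoka--Shepherd-Barron type this forces $\overline{L} \simeq \mathbb{P}^m$, on which families $1$ and $2$ both restrict to the family of lines; hence these two families agree on $\overline{L}$, and comparing positive parts via Lemma~\ref{L:Hwa} one more time yields $W^{(1)} = W^{(2)}$, i.e.\ $\mathcal{C}_y^1 = \mathcal{C}_y^2$ — the desired contradiction.

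The hard part will be this last step: verifying rigorously that the family-$2$ curves tangent at $y$ to the leaf $L$ actually lie in $\overline{L}$ (equivalently, stay tangent to $W^{(1)}$ away from $y$), that $\overline{L}$ is smooth enough to invoke the $\mathbb{P}^m$-characterization, and that the resulting coincidence of families really propagates back up to an equality of the $W^{(i)}$ over $X$. If this becomes too delicate, an alternative is to imitate directly the cone argument of \cite{MR2232023} used in the main text (Lemmas~\ref{L:Ara4.2}--\ref{L:Ara4.3}), now applied to the linear span of $\mathcal{C}_x^1 \cup \mathcal{C}_x^2$, or to replace the second half of the argument by a dimension count on the incidence variety of pairs of tangentially-meeting curves drawn from the two families.
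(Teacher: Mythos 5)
The paper offers no proof of this statement: it is quoted from Hwang \cite[2.2]{MR1919462} and used as a black box, so there is no internal argument to compare yours against. On its own merits, your first paragraph is sound: $H_x$ is smooth (hence normal) of pure dimension $d$, $\tau_x$ is the normalization, so each component $H_x^i$ maps finitely and birationally onto the $d$-dimensional component $\mathcal{C}_x^i$, which by hypothesis is a linear $\mathbb{P}^d$ and in particular normal; Zariski's Main Theorem then gives an isomorphism. (Minor caveat: members of $H_x^i$ singular at $x$ need not have ``tangent direction $[v]$'' in the naive sense, since $\tau_x$ is only the extension of the tangent map.) The problem is the second half.

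The decisive step --- that the family-$2$ curves through $y$ tangent to directions in $\mathcal{C}_y^1 \cap \mathcal{C}_y^2$ sweep out the leaf $\overline{L}$ of $W^{(1)} \cap W^{(2)}$ --- is precisely the point you defer to ``the hard part,'' and it is not a technicality that can be expected to go through: granting the Araujo-type integrability results, a curve of family $2$ is an integral curve of $W^{(2)}$, but nothing constrains it to stay tangent to $W^{(1)}$ after it leaves $y$; its tangent direction at a nearby point $y'$ lies in $\mathcal{C}_{y'}^2$ and has no reason to lie in $\mathcal{C}_{y'}^1$. Without this, $\overline{L}$ is not covered by either family, the Cho--Miyaoka--Shepherd-Barron characterization has nothing to bite on, and the contradiction evaporates. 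There are also structural concerns upstream of that: the existence of the $W^{(i)}$ as integrable distributions whose leaves are $\operatorname{Locus}(H_y^i)$ is essentially the content of Theorem~\ref{T:AraMain}, which in this paper is applied only \emph{after} Proposition~\ref{P:Hwa2.2} has reduced to the case of an irreducible linear $\mathcal{C}_x$; invoking it component-by-component requires justifying that the $H^i$ form genuine families (your \'etale-cover/monodromy step, left as a gesture), and that $W^{(1)}\cap W^{(2)}$ has constant rank and is integrable where you need it. As written this is a programme rather than a proof; given that the statement is an external citation, the correct course is either to cite Hwang as the paper does or to supply the missing tangency/propagation argument, which is the entire mathematical content of the proposition.
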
 

Now since $H_x$ is the normalization of $\mathcal{C}_x$ and it is dimension $d := \mbox{deg}(f^\ast T_X) - 2$ for a general point $x \in X$, $\mathcal{C}_x$ is in fact a linear subspace of $\mathbb{P}(T_xX)$ of dimension $d$ for every general point $x \in X$. Therefore one can apply the main theorem of \cite{MR2232023} (= Theorem \ref{T:AraMain}) to conclude that the $H$-rationally connected quotient $\pi^\circ: X^\circ \longrightarrow Y^\circ$ admits the structure of a projective space bundle. But since the Picard number of $X$ is 1, $Y^\circ$ is a point by Proposition \ref{P:trivMRC}. Therefore $X \simeq \mathbb{P}^n$ as desired, and this proves Theorem \ref{T:main}. $\Box$

\bibliographystyle{skalpha}
\bibliography{notes2b}

\providecommand{\bysame}{\leavevmode\hbox to3em{\hrulefill}\thinspace}
\providecommand{\MR}{\relax\ifhmode\unskip\space\fi MR}
\providecommand{\MRhref}[2]{%
  \href{http://www.ams.org/mathscinet-getitem?mr=#1}{#2}
}
\providecommand{\href}[2]{#2}
\begin{thebibliography}{CMSB02}

\bibitem[AW01]{MR1859022}
{\sc M.~Andreatta and J.~A. Wi{\'s}niewski}: \emph{On manifolds whose tangent
  bundle contains an ample subbundle}, Invent. Math. \textbf{146} (2001),
  no.~1, 209--217. {\sf\scriptsize 1859022 (2002h:14070)}

\bibitem[Ara06]{MR2232023}
{\sc C.~Araujo}: \emph{Rational curves of minimal degree and characterizations
  of projective spaces}, Math. Ann. \textbf{335} (2006), no.~4, 937--951.
  {\sf\scriptsize 2232023 (2007d:14037)}

\bibitem[ADK08]{MR2439607}
{\sc C.~Araujo, S.~Druel, and S.~J. Kov{\'a}cs}: \emph{Cohomological
  characterizations of projective spaces and hyperquadrics}, Invent. Math.
  \textbf{174} (2008), no.~2, 233--253. {\sf\scriptsize 2439607 (2009j:14021)}

\bibitem[Bea00]{MR1738060}
{\sc A.~Beauville}: \emph{Symplectic singularities}, Invent. Math. \textbf{139}
  (2000), no.~3, 541--549. {\sf\scriptsize 1738060 (2001f:14008)}

\bibitem[CP98]{MR1642626}
{\sc F.~Campana and T.~Peternell}: \emph{Rational curves and ampleness
  properties of the tangent bundle of algebraic varieties}, Manuscripta Math.
  \textbf{97} (1998), no.~1, 59--74. {\sf\scriptsize 1642626 (99g:14051)}

\bibitem[CMSB02]{MR1929792}
{\sc K.~Cho, Y.~Miyaoka, and N.~I. Shepherd-Barron}: \emph{Characterizations of
  projective space and applications to complex symplectic manifolds}, Higher
  dimensional birational geometry ({K}yoto, 1997), Adv. Stud. Pure Math.,
  vol.~35, Math. Soc. Japan, Tokyo, 2002, pp.~1--88. {\sf\scriptsize 1929792
  (2003m:14080)}

\bibitem[Dru04]{MR2092774}
{\sc S.~Druel}: \emph{Caract\'erisation de l'espace projectif}, Manuscripta
  Math. \textbf{115} (2004), no.~1, 19--30. {\sf\scriptsize 2092774
  (2005k:14023)}

\bibitem[Fra61]{MR0123272}
{\sc T.~Frankel}: \emph{Manifolds with positive curvature}, Pacific J. Math.
  \textbf{11} (1961), 165--174. {\sf\scriptsize 0123272 (23 \#A600)}

\bibitem[Har66]{MR0193092}
{\sc R.~Hartshorne}: \emph{Ample vector bundles}, Inst. Hautes \'Etudes Sci.
  Publ. Math. (1966), no.~29, 63--94. {\sf\scriptsize 0193092 (33 \#1313)}

\bibitem[Har70]{MR0282977}
{\sc R.~Hartshorne}: \emph{Ample subvarieties of algebraic varieties}, Notes
  written in collaboration with C. Musili. Lecture Notes in Mathematics, Vol.
  156, Springer-Verlag, Berlin, 1970. {\sf\scriptsize 0282977 (44 \#211)}

\bibitem[Har77]{MR0463157}
{\sc R.~Hartshorne}: \emph{Algebraic geometry}, Springer-Verlag, New York,
  1977, Graduate Texts in Mathematics, No. 52. {\sf\scriptsize 0463157 (57
  \#3116)}

\bibitem[Hwa01]{MR1919462}
{\sc J.-M. Hwang}: \emph{Geometry of minimal rational curves on {F}ano
  manifolds}, School on {V}anishing {T}heorems and {E}ffective {R}esults in
  {A}lgebraic {G}eometry ({T}rieste, 2000), ICTP Lect. Notes, vol.~6, Abdus
  Salam Int. Cent. Theoret. Phys., Trieste, 2001, pp.~335--393. {\sf\scriptsize
  1919462 (2003g:14054)}

\bibitem[HM04]{MR2128297}
{\sc J.-M. Hwang and N.~Mok}: \emph{Birationality of the tangent map for
  minimal rational curves}, Asian J. Math. \textbf{8} (2004), no.~1, 51--63.
  {\sf\scriptsize 2128297 (2006c:14065)}

\bibitem[Keb02]{MR1874114}
{\sc S.~Kebekus}: \emph{Families of singular rational curves}, J. Algebraic
  Geom. \textbf{11} (2002), no.~2, 245--256. {\sf\scriptsize 1874114
  (2003e:14011)}

\bibitem[KO73]{MR0316745}
{\sc S.~Kobayashi and T.~Ochiai}: \emph{Characterizations of complex projective
  spaces and hyperquadrics}, J. Math. Kyoto Univ. \textbf{13} (1973), 31--47.
  {\sf\scriptsize 0316745 (47 \#5293)}

\bibitem[Kol96]{MR1440180}
{\sc J.~Koll{\'a}r}: \emph{Rational curves on algebraic varieties}, Ergebnisse
  der Mathematik und ihrer Grenzgebiete. 3. Folge. A Series of Modern Surveys
  in Mathematics [Results in Mathematics and Related Areas. 3rd Series. A
  Series of Modern Surveys in Mathematics], vol.~32, Springer-Verlag, Berlin,
  1996. {\sf\scriptsize 1440180 (98c:14001)}

\bibitem[Laz04]{MR2095471}
{\sc R.~Lazarsfeld}: \emph{Positivity in algebraic geometry. {I}}, Ergebnisse
  der Mathematik und ihrer Grenzgebiete. 3. Folge. A Series of Modern Surveys
  in Mathematics [Results in Mathematics and Related Areas. 3rd Series. A
  Series of Modern Surveys in Mathematics], vol.~48, Springer-Verlag, Berlin,
  2004, Classical setting: line bundles and linear series. {\sf\scriptsize
  2095471 (2005k:14001a)}

\bibitem[Miy87]{MR927960}
{\sc Y.~Miyaoka}: \emph{Deformations of a morphism along a foliation and
  applications}, Algebraic geometry, {B}owdoin, 1985 ({B}runswick, {M}aine,
  1985), Proc. Sympos. Pure Math., vol.~46, Amer. Math. Soc., Providence, RI,
  1987, pp.~245--268. {\sf\scriptsize 927960 (89e:14011)}

\bibitem[Miy04]{MR2112591}
{\sc Y.~Miyaoka}: \emph{Numerical characterisations of hyperquadrics}, The
  {F}ano {C}onference, Univ. Torino, Turin, 2004, pp.~559--562. {\sf\scriptsize
  2112591 (2005h:14103)}

\bibitem[Mor79]{MR554387}
{\sc S.~Mori}: \emph{Projective manifolds with ample tangent bundles}, Ann. of
  Math. (2) \textbf{110} (1979), no.~3, 593--606. {\sf\scriptsize 554387
  (81j:14010)}

\bibitem[Par10]{paris-2010}
{\sc M.~Paris}: \emph{Caract\'erisations des espaces projectifs et des
  quadriques}, 2010. {\sf\scriptsize arXiv:1009.1247v2 [math.AG]}

\bibitem[SY80]{MR577360}
{\sc Y.~T. Siu and S.~T. Yau}: \emph{Compact {K}\"ahler manifolds of positive
  bisectional curvature}, Invent. Math. \textbf{59} (1980), no.~2, 189--204.
  {\sf\scriptsize 577360 (81h:58029)}

\bibitem[Wah83]{MR700774}
{\sc J.~M. Wahl}: \emph{A cohomological characterization of {${\bf P}^{n}$}},
  Invent. Math. \textbf{72} (1983), no.~2, 315--322. {\sf\scriptsize 700774
  (84h:14024)}

\end{thebibliography}

\end{document}